\definecolor{lightblue}{rgb}{.88,.88,1}
\definecolor{lightgreen}{RGB}{153,255,153}
\definecolor{brightred}{RGB}{255,80,102}
\definecolor{newgreen}{RGB}{0,255,0}
\numberwithin{equation}{section}
\newcommand{\gl}{\left\lfloor\frac{g}{2}\right\rfloor}
\newcommand{\gc}{\left\lceil\frac{g}{2}\right\rceil}
\newcommand{\p}{\mathcal{P}} 
\newcommand{\SP}{\mathcal{SP}}
\newcommand{\SPC}{{\rm SPC}}
\newcommand{\TT}{{\rm TT}}
\newcommand{\T}{{\rm T}}
\newtheorem{theorem}{Theorem}
\newtheorem{observation}[theorem]{Observation}
\newtheorem{definition}[theorem]{Definition}
\newtheorem{conjecture}[theorem]{Conjecture}
\newtheorem{claim}{Claim}
\newtheorem{lemma}[theorem]{Lemma}
\newtheorem{proposition}[theorem]{Proposition}
\newtheorem{remark}{Remark}
\begin{document}

\title{\Large \bf Extended Double Covers and Homomorphism Bounds of Signed Graphs\footnote{\noindent This research was financed by the ANR project HOSIGRA (ANR-17-CE40-0022) and the IFCAM project ``Applications of graph homomorphisms'' (MA/IFCAM/18/39). The research of the first author was partially financed by the French government IDEX-ISITE initiative 16-IDEX-0001 (CAP 20-25). The third author was supported by the project with grant number: NSFC 11871439, and was also supported by Fujian Provincial Department of Science and Technology(2020J01268).}}
  
\author[1,2]{Florent Foucaud}
\author[3]{Reza Naserasr} 
\author[3,4]{Rongxing Xu} 

\affil[1]{\small LIMOS, CNRS UMR 6158, Universit\'e Clermont Auvergne, Aubi\`ere, France. E-mail: florent.foucaud@uca.fr}
\affil[2]{\small Univ. Orléans, INSA Centre Val de Loire, LIFO EA 4022, F-45067 Orléans Cedex 2, France.}
\affil[3]{\small Universit\'{e} de Paris, CNRS, IRIF, F-75006, Paris, France. E-mail: reza@irif.fr}
\affil[4]{\small School of Mathematical Sciences, University of Science and Technology of China, Hefei, Anhui, 230026, China. E-mail:xurongxing@ustc.edu.cn.}

\maketitle

 \begin{abstract}
 A \emph{signed graph} $(G, \sigma)$ is a graph $G$ together with an assignment $\sigma:E(G) \rightarrow \{+,-\}$.
 	The notion of homomorphisms of signed graphs is a relatively new development which allows to strengthen the connection between the theories of minors and colorings of graphs. Following this thread of thoughts, we investigate this connection through the notion of Extended Double Covers of signed graphs, which was recently introduced by Naserasr, Sopena and Zaslavsky. 
 	More precisely,  we say that a signed graph $(B, \pi)$ is planar-complete if any planar signed graph $(G, \sigma)$ which verifies the  conditions of a basic no-homomorphism lemma with respect to $(B,\pi)$  admits a homomorphism to $(B, \pi)$. Our conjecture then is that: if $(B, \pi)$ is a connected signed graph with no positive odd closed walk which is planar-complete, then its Extended Double Cover ${\rm EDC}(B,\pi)$ is also planar-complete. We observe that this conjecture largely extends the Four-Color Theorem and is strongly connected to a number of conjectures in extension of this famous theorem. 
 	
A given (signed) graph $(B,\pi)$ \emph{bounds} a class of (signed) graphs if every (signed) graph in the class admits a homomorphism to $(B,\pi)$.
 	In this work, and in support of our conjecture, we prove it for the subclass of signed $K_4$-minor free graphs. Inspired by this development, we then investigate the problem of finding optimal homomorphism bounds for subclasses of signed $K_4$-minor-free graphs with restrictions on their girth and we present nearly optimal solutions. Our work furthermore leads to the development of weighted signed graphs.
 \end{abstract}
 
 \section{Introduction}
 
 Graphs in this work are simple and connected, unless stated otherwise (when it is not the case we refer to them as multigraphs). For graph terminology, we follow \cite{WestBook2000}. We work on the larger realm of signed graphs, for which we introduce the extended terminology next.

 \subsection{Signed graphs}
 A \emph{signed graph} $(G, \sigma)$ is a graph $G$ together with an assignment $\sigma:E(G) \rightarrow \{+,-\}$. The graph $G$ may be referred to as the \emph{underlying graph} of $(G, \sigma)$ and $\sigma$ is called its \emph{signature}. 
 
 Given a closed walk $W$ of $(G, \sigma)$, whose edges are $e_1,e_2, \ldots, e_l$, in the order in which they are traversed (allowing repetition), the sign of $W$ is defined to be the product $\sigma(W) := \sigma(e_1)\sigma(e_2)\cdots\sigma(e_l)$. The walk $W$ is said to be \emph{positive} or \emph{negative} depending on the value of $\sigma(W)$. Since a cycle of a graph is also a closed walk, we naturally have the definition of \emph{positive cycles} and \emph{negative cycles}. Observe that if a closed walk $W$ which starts at a vertex $v$ consists of a closed walk $W_1$ at $v$ followed by a closed walk $W_2$ at $v$, then $\sigma(W)=\sigma(W_1)\sigma(W_2)$. Thus, the sign of a closed walk can be determined by the sign of cycles it is composed of.
 
 A key notion in the study of signed graphs is \emph{switching}: to switch at a vertex $v$ of a signed graph $(G, \sigma)$ is to multiply the signs of all edges in the cut $({v}, V(G)\setminus v)$ by $-$. To switch at a set $X$ of vertices is to switch at all the vertices of $X$ in any sequence; this is equivalent to multiplying the signs of all edges in the cut $(X, V(G)\setminus X)$ by $-$. Two signatures $\sigma_1$ and $\sigma_2$ on $G$ are said to be \emph{switching equivalent}, or equivalent for short, if one can be obtained from the other one by a switching. The relation ``switching equivalent'' is an equivalence relation on the set of all possible signatures of a given graph.

 One of the first theorems in the theory of signed graphs is that the set of negative cycles (equivalently the set of positive cycles) uniquely determines the equivalence class of signatures \cite{Zas82}. More precisely:
 
 \begin{theorem}[\cite{Zas82}]
 	\label{signs-equal}
 	Two signatures $\sigma_1$ and $\sigma_2$ on a graph $G$ are equivalent if and only if they induce the same set of negative cycles.
 \end{theorem}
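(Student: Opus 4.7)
The plan is to prove the two implications separately, with the forward direction being a short observation and the reverse direction being the main content.

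First I would dispatch the ``only if'' direction. Suppose $\sigma_2$ is obtained from $\sigma_1$ by switching at a set $X \subseteq V(G)$. For any cycle $C$, the number of edges of $C$ that lie in the cut $(X, V(G)\setminus X)$ is even, because $C$ must cross between $X$ and its complement the same number of times in each direction. Switching flips precisely the signs of edges in this cut, so the sign of $C$ is multiplied by $(-1)^{\text{even}} = +1$. Hence $\sigma_1(C) = \sigma_2(C)$ for every cycle $C$, and in particular they have the same negative cycles.

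For the ``if'' direction, I would work with the pointwise product signature $\tau := \sigma_1 \cdot \sigma_2$ (identifying $\{+,-\}$ with $\{+1,-1\}$). If $\sigma_1$ and $\sigma_2$ share the same set of negative cycles, then $\sigma_1(C)=\sigma_2(C)$ for every cycle $C$, so $\tau(C)=+1$ for every cycle. Since switching at $X$ commutes with pointwise multiplication in the obvious sense, the statement reduces to: \emph{any signature $\tau$ with all cycles positive is switching equivalent to the all-positive signature}. Once this is proved, applying the corresponding switching to $\sigma_1$ yields $\sigma_2$, because the switching operation turns $\sigma_1 = \sigma_2 \cdot \tau$ into $\sigma_2 \cdot (\text{all-positive}) = \sigma_2$.

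To prove the reduced statement, I would use a spanning tree argument. Fix a spanning tree $T$ of $G$, rooted at a vertex $r$, and for each vertex $v$ let $P(v)$ denote the unique $r$-to-$v$ path in $T$. Define
\[
X := \{\, v \in V(G) \;:\; \tau(P(v)) = - \,\}.
\]
I would then verify, by a case analysis on whether the endpoints of a given tree edge $uv$ belong to $X$, that switching at $X$ makes every tree edge positive; the key identity is $\tau(uv) = \tau(P(u))\,\tau(P(v))$ when $u$ is the parent of $v$. For a non-tree edge $e = uv$, let $C_e$ be the fundamental cycle consisting of $e$ and the tree path between $u$ and $v$. By the already-established ``only if'' direction, switching preserves $\tau(C_e) = +$; since after switching all tree edges of $C_e$ are positive, the edge $e$ itself must be positive as well. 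Thus every edge becomes positive and $\tau$ is switching equivalent to the all-positive signature.

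The main delicate point, and essentially the only obstacle, is getting the case analysis in the spanning-tree step right: one must check that for each possible combination of $\tau(P(u))$ and $\tau(P(v))$, the parity of $|\{u,v\}\cap X|$ exactly compensates the original sign of $uv$. This is a short four-case check, and no deeper technical difficulty is expected beyond it.
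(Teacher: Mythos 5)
The paper does not prove this statement; it is quoted as a known result of Zaslavsky from~\cite{Zas82}, so there is no internal proof to compare against. Your argument is correct and is essentially the standard proof of Zaslavsky's switching theorem: the cut-parity observation for the forward direction, and for the converse the reduction to the all-positive case via the product signature $\tau=\sigma_1\sigma_2$, followed by the spanning-tree switching set $X=\{v:\tau(P(v))=-\}$ and the fundamental-cycle argument for non-tree edges. All the steps check out (the identity $\tau(uv)=\tau(P(u))\,\tau(P(v))$ for a tree edge does exactly the work you claim, and connectivity of $G$, assumed throughout the paper, guarantees the spanning tree exists), so no gap remains.
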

 
 \subsection{Walk girths and special subclasses}
 \label{section:walk girth and special class}
 Considering sign and parity of the length of a closed walk, we have essentially four different types of closed walks. Following~\cite{NSZ20}, we use the elements of $\mathbb{Z}_2^2$ to denote these types in such a way that if two closed walks $W_1$ and $W_2$ of type $ij$ and $i'j'$ have a common starting point, then the walk $W_1W_2$ is of type $ij+i'j'$ where $+$ is the additive operation of $\mathbb{Z}_2^2$. Thus, a positive even walk is of type 00, a negative even walk is of type 10, a positive odd walk is of type 01 and a negative odd walk is of type 11. The \emph{$ij$-walk girth} of $(G, \sigma)$, denoted $g_{ij}(G, \sigma)$, is the length of a shortest closed walk of type $ij$ (setting it to be $\infty$ when there exists no walk of type $ij$). It is easy to observe (see \cite{NSZ20}) that if $g_{ij}(G, \sigma)=\infty$ for one choice of $ij$, then $g_{kl}(G, \sigma)=\infty$ for some other $kl\in \mathbb{Z}_2^2$, distinct from $ij$. Furthermore, observing that $g_{00}(G, \sigma)=2$ unless $G$ has no edge, we have three special subclasses: $\mathcal{G}_{ij}$, for $ij\in \mathbb{Z}_2^2$ and $ij\neq 00$, is the class of all signed graphs in which every cycle is either of type $00$ or of type $ij$. Of these three classes, $\mathcal{G}_{10}$ and $\mathcal{G}_{11}$ are of special importance to us. They are refered to as \emph{consistent} signed graphs in \cite{NRS13} and together, these two classes contain all signed graphs with no positive odd closed walk. Conversely, assuming $G$ is connected, if $(G, \sigma)$ has no positive odd closed walk, then it must be a signed graph in one of these two classes. As graphs in this work are always connected, this property will be our reference point for the union of these two families of signed graphs. Observe that $\mathcal{G}_{10}$ is the class of all \emph{signed bipartite graphs} and $\mathcal{G}_{11}$ is the class of all signed graphs $(G, \sigma)$ which can be switched to $(G, -)$ (it is known as the class of \emph{antibalanced signed graphs}, see \cite{NSZ20}).

 \subsection{Homomorphisms of signed graphs}
 
 We mentioned that the signs of closed walks of $(G, \sigma)$ determine the switching equivalent class of signed graphs on $G$ to which $(G, \sigma)$ belongs to.
 Thus, for signed graphs equipped with the switching operation, the signs of closed walks uniquely determine the signed graph. 
 Since homomorphisms preserve the fundamental structure of the considered objects, a natural definition of a homomorphism of signed graph $(G, \sigma)$ to signed graph $(H, \pi)$ is given below. We note that this definition works the same for signed graphs with loops and multiedges, and that parallel edges of a same sign do not influence the existence of homomorphisms.
 
 \begin{definition}[\cite{NSZ20}]
 	A \emph{homomorphism} of a signed graph $(G, \sigma)$ to a signed graph $(H, \pi)$ is a mapping $f$ which maps the vertices and edges of $G$ to the vertices and edges of $H$, respectively, with the property that incidences, adjacencies and signs of closed walks are preserved. The mapping $f$ is furthermore a \emph{sign-preserving homomorphism} of $(G, \sigma)$ to $(H, \pi)$ if it preserves the signs of edges. The existence of a homomorphsm is denoted by $(G, \sigma)\to (H, \pi)$.
  \end{definition}
 
 This definition immediately gives us a key ingredient of our work, which is our first ``no-homomorphism lemma'' in the context of signed graphs (this term refers to a necessary condition for a specific homomorphism to exist, see~\cite{Hahn-Tardif} for this concept on usual graphs).
 Observe that the image of a closed walk is a closed walk of same length. As we ask the sign of closed walks to be preserved, when there exists a homomorphism from $(G, \sigma)$ to $(H, \pi)$, then for each $ij\in \mathbb{Z}_2^2$, the length of a shortest walk of type $ij$ in $(G, \sigma)$ is at least as that of $(H, \pi)$. This simple fact, which is a key component of our work, is stated as the following no-homomorphism lemma:  
 
 \begin{lemma}\label{lem:No-Hom-Lemma}[The no-homomorphism lemma]
 	Given signed graphs $(G, \sigma)$ and $(H, \pi)$, if $(G, \sigma) \to (H, \pi)$, then for each $ij\in \mathbb{Z}_2^2$, we have $g_{ij}(G, \sigma)\geq g_{ij}(H, \pi)$. 
 \end{lemma}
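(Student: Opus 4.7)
The plan is to chase the definitions: unpack what a homomorphism does to a closed walk and then check that both the length parity and the sign are preserved, so a closed walk of type $ij$ in $(G,\sigma)$ produces a closed walk of type $ij$ of the same length in $(H,\pi)$.

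First I would handle the trivial case: if $g_{ij}(G,\sigma)=\infty$ there is nothing to prove. Otherwise, fix a homomorphism $f\colon (G,\sigma)\to (H,\pi)$ and a closed walk $W=e_1e_2\cdots e_\ell$ in $G$ of type $ij$ with $\ell=g_{ij}(G,\sigma)$. Because $f$ maps vertices to vertices and edges to edges preserving incidences and adjacencies, the sequence $f(W):=f(e_1)f(e_2)\cdots f(e_\ell)$ is a walk of length exactly $\ell$ in $H$, and its two endpoints are $f$ applied to the two endpoints of $W$; since $W$ is closed, so is $f(W)$.

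Next I would check the type. The length $\ell$ is the same, so the parities of $W$ and $f(W)$ agree, handling the second coordinate of the type. For the first (sign) coordinate, the definition of homomorphism of signed graphs requires that signs of closed walks be preserved, so $\pi(f(W))=\sigma(W)$. Combining these two observations, $f(W)$ is a closed walk in $(H,\pi)$ of type $ij$ and of length $\ell$. Hence $g_{ij}(H,\pi)\leq \ell = g_{ij}(G,\sigma)$, which is the desired inequality.

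The argument has no real obstacle, which is the point of calling it a no-homomorphism lemma; the only subtlety is ensuring that one can genuinely transport the sign through $f$. This is built into the chosen definition of homomorphism (which is stated in terms of closed walk signs rather than edge signs), so the proof is essentially a one-line unpacking and does not require, for instance, invoking Theorem~\ref{signs-equal} or switching.
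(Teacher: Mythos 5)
Your argument is exactly the paper's: the paper proves this lemma by the same observation that a homomorphism sends a closed walk of type $ij$ to a closed walk of the same length (hence the same parity) and, by the definition of homomorphism, the same sign, so the minimum length of type-$ij$ closed walks cannot increase under the map. Your write-up just spells out the details (including the trivial $g_{ij}=\infty$ case) that the paper leaves implicit, so it is correct and matches the intended proof.
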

 
 Observe that when there are no parallel edges, the vertex-mapping and the condition of preserving adjacencies uniquely determine the edge-mapping, and thus we may define a homomorphism solely by a vertex-mapping, as was originally done in~\cite{NRS15}.

 The notion of homomorphisms is closely related to the notion of sign-preserving homomorphisms. We mention some connection next, but for a proof and further connections we refer to \cite{NSZ20}.
 
 \begin{theorem}[\cite{NSZ20}]\label{thm:SignPreservHom-Hom}
 	Given signed graphs $(G, \sigma)$ and $(H, \pi)$, there exists a homomorphism of $(G, \sigma)$ to $(H, \pi)$ if and only if there exists a signature $\sigma'$, switching equivalent to $\sigma$, and a sign-preserving homomorphism of $(G, \sigma')$ to $(H, \pi)$.
 \end{theorem}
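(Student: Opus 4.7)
The plan is to prove both implications separately, using Theorem~\ref{signs-equal} (which characterizes switching-equivalence by the set of negative cycles) in the harder direction.

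For the \emph{if} direction, I would start from a signature $\sigma'$ switching-equivalent to $\sigma$ and a sign-preserving homomorphism $f: (G, \sigma') \to (H, \pi)$. Since $f$ preserves signs of edges, it preserves signs of arbitrary closed walks of $(G, \sigma')$. Because switching does not alter the sign of any closed walk (switching at a vertex $v$ flips the sign of every edge in the cut at $v$ an even number of times along any closed walk through $v$), the signs of closed walks of $(G, \sigma)$ and $(G, \sigma')$ agree. Hence $f$, viewed as a map on $(G, \sigma)$, preserves incidences, adjacencies and signs of closed walks, i.e.\ it is a homomorphism $(G, \sigma) \to (H, \pi)$.

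For the \emph{only if} direction, I would assume a homomorphism $f : (G, \sigma) \to (H, \pi)$ and explicitly build the required signature by pulling back the signature of $H$ along $f$. Concretely, for every edge $e = uv$ of $G$, set $\sigma'(e) := \pi(f(e))$ (which is well-defined edge-wise, since $f$ maps edges to edges). Then $f : (G, \sigma') \to (H, \pi)$ is sign-preserving by construction. It remains to verify that $\sigma'$ is switching-equivalent to $\sigma$, and by Theorem~\ref{signs-equal} it suffices to show that $\sigma$ and $\sigma'$ induce the same set of negative cycles.

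The key computation is that for any cycle $C = e_1 e_2 \cdots e_\ell$ of $G$, we have
\[
\sigma'(C) \;=\; \prod_{i=1}^{\ell} \sigma'(e_i) \;=\; \prod_{i=1}^{\ell} \pi(f(e_i)) \;=\; \pi(f(C)),
\]
where $f(C)$ is the image closed walk in $H$. Since $f$ is a homomorphism of signed graphs, it preserves the sign of the closed walk $C$, so $\pi(f(C)) = \sigma(C)$. Therefore $\sigma'(C) = \sigma(C)$ for every cycle $C$, the two signatures have the same set of negative cycles, and Theorem~\ref{signs-equal} yields $\sigma' \sim \sigma$. I expect the main subtlety to be pedantic rather than deep: one must be careful that signs of closed walks really are invariants of the switching class (used in the \emph{if} part) and that the homomorphism condition applies to \emph{every} closed walk, in particular to each cycle, which is exactly what is needed in the \emph{only if} part.
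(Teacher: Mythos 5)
Your proof is correct; note that the paper itself does not supply a proof of this statement but defers to \cite{NSZ20}, so there is no in-paper argument to compare against. Your route is the standard (and intended) one: the backward direction uses that both switching and edge-sign preservation leave signs of closed walks unchanged, and the forward direction pulls back $\pi$ along $f$ to define $\sigma'$ and then applies Theorem~\ref{signs-equal} to the common set of negative cycles to conclude $\sigma'\sim\sigma$.
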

 
 The claim of this theorem is the original definition of homomorphism in \cite{NRS15}, and in many cases it is rather easier to work with this equivalent definition. Thus, a homomorphism $f$ of $(G, \sigma)$ to $(H, \pi)$ is built of three components: $f_1$, which decides at each vertex whether a switching is done; $f_2$, which is the vertex-mapping; $f_3$, which is the edge-mapping. Since in the case of simple graphs $f_3$ is uniquely determined by $f_2$, the mapping $f$ could be given simply as $f=(f_1,f_2)$.

 \subsection{The Extended Double Cover contruction and signed projective cubes}
 
 The notion of homomorphisms of signed graphs was first defined by B. Guenin \cite{Guenin2005} to present a bipartite analogue of a conjecture of the second author on an extension of the Four-Color Theorem. The conjecture is about mapping planar graphs to a class of graphs known as projective cubes, or folded cubes, which are closely related to hypercubes. After defining them as signed graphs, an inductive definition of projective cubes was presentend in \cite{RezaHDR}. This has lead to the introduction of the notion of the Extended Double Cover of a signed graph in \cite{NSZ20}, defined as follows. 
 
 \begin{definition}[\cite{NSZ20}]
 	\label{def-EDC}
 	Given a signed graph $(G, \sigma)$, the \emph{Extended Double Cover} of $(G, \sigma)$, denoted  ${\rm EDC}(G, \sigma)$, is defined to be the signed graph on vertex set $V^+\cup V^-$, where $V^+ := \{v^+: v \in V(G)\}$ and $V^- := \{v^-: v \in V(G)\}$. For each vertex $x$, the two vertices $x^+$ and $x^-$ are connected by a negative edge; all other edges, to be described next, are positive. If vertices $u$ and $v$ are adjacent in $(G, \sigma)$ by a positive edge, then $v^+u^+$ and $v^-u^-$ are two positive edges of ${\rm EDC}(G, \sigma)$. If vertices $u$ and $v$ are adjacent in $(G, \sigma)$ by a negative edge, then $v^+u^-$ and $v^-u^+$ are two positive edges of ${\rm EDC}(G,\sigma)$. 
 \end{definition} 
 
\begin{figure}[htpb]
	\begin{minipage}[t]{.5\textwidth}
		\centering
		\begin{tikzpicture}[>=latex,
			roundnode/.style={circle, draw=black!90, very thick, minimum size=5mm, inner sep=0pt},
			scale=1.2] 
			\node[roundnode](a1) at (0,4.5){\small $u$};
			\node[roundnode](b1) at (0,3) {\small $v$};
			\node[roundnode](c1) at (0,1.5){\small $w$};
			\node[roundnode](d1) at (0,0){\small $x$}; 
			\draw[blue, line width =1.2pt](a1)--(b1)--(c1);
			\draw[blue, line width =1pt](c1)--(d1); 
			\draw[red, line width =1pt, dashed](a1) edge[bend right] (d1);   
		\end{tikzpicture}
		\subcaption{$(C_{-4})$} 
	\end{minipage} 
	\begin{minipage}[t]{.4\textwidth}
		\centering
		\begin{tikzpicture}[>=latex,roundnode/.style={circle, draw=black!90, very thick, minimum size=5mm, inner sep=0.2pt},scale=1.2] 
		\node[roundnode](a1) at (0,4.5){\small $u^+$};
		\node[roundnode](b1) at (0,3) {\small $v^+$};
		\node[roundnode](c1) at (0,1.5){\small $w^+$};
		\node[roundnode](d1) at (0,0){\small $x^+$}; 
		\node[roundnode](a2) at (3,4.5){\small $u^-$};
		\node[roundnode](b2) at (3,3) {\small $v^-$};
		\node[roundnode](c2) at (3,1.5){\small $w^-$};
		\node[roundnode](d2) at (3,0){\small $x^-$}; 
		\draw[blue, line width =1.2pt](a1)--(b1)--(c1);
		\draw[blue, line width =1.2pt](a2)--(b2)--(c2); 
		 
		\draw[red, line width =1.2pt, dashed](a1)--(a2); 
		\draw[red, line width =1.2pt, dashed](b1)--(b2); 
				\draw[red, line width =1.2pt, dashed](a1)--(a2); 
			\draw[red, line width =1.2pt, dashed](c1)--(c2);
			\draw[blue, line width =1.2pt](c1)--(d1);
		\draw[red, line width =1.2pt, dashed](d1)--(d2); 
		   \draw[blue, line width =1.2pt](d2)--(c2);
  \draw[blue, line width =1.2pt](a1)--(d2);
  \draw[blue, line width =1.2pt](a2)--(d1);
		\end{tikzpicture}
		\subcaption{${\rm EDC}(C_{-4})$}
	\end{minipage} 
	\caption{Signed graphs $C_{-4}$ and ${\rm EDC}(C_{-4})$. Dashed (red) edges are negative.}
	\label{fig:EDCC4}
\end{figure}
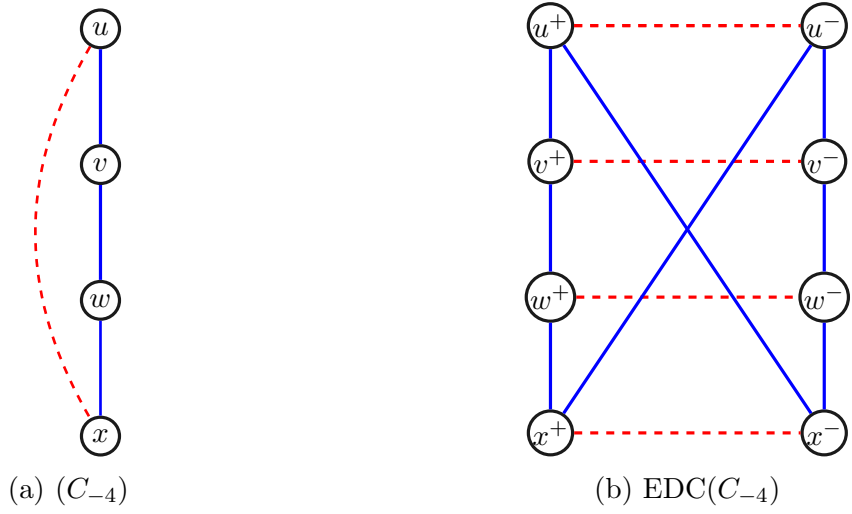
 
 A key point of this construction is that it gives a geometric shape to signs and the switching operation. Viewing vertices of $V^+$ on one side and vertices of $V^-$ on the other side, a negative edge of $(G, \sigma)$ corresponds to a twist in ${\rm EDC}(G, \sigma)$, whereas a positive edge is represented by a parallel pair of edges. For example, ${\rm EDC}(C_{-k})$, where $(C_{-k})$ is the signed graph on the cycle graph $C_k$ with an odd number of negative edges, is a M\"obius ladder, see Figure \ref{fig:EDCC4}. Switching at a vertex $x$ of $(G, \sigma)$ then corresponds to exchanging the sides of $x^+$ and $x^-$.

 Using the Extended Double Cover construction, the well-studied class of \emph{signed projective cubes} can be defined. 
 
 \begin{definition}\cite{NSZ20}\label{def:SPC}
 	The \emph{signed projective cube} of dimension $k$, denoted ${\rm SPC}(k)$, is defined inductively as follows: 
 	\begin{itemize}
 		\item ${\rm SPC}(1)$ is the digon, that is, the signed multigraph on two vertices connected by two parallel edges: one of positive sign, the other of negative sign; 
 		\item for $k\geq 2$, ${\rm SPC}(k)={\rm EDC(SPC}(k-1))$.
 	\end{itemize}
 \end{definition}
 
 We leave it to the reader to verify that ${\rm SPC}(k)$ can be directly defined as follows:
 
 \begin{proposition}\cite{NSZ20}
 	\label{pro:SPC-coordinate-def}
 	The signed graph ${\rm SPC}(k)$ is a signed graph built on vertex set $\mathbb{Z}_2^k$, where two vertices are adjacent with a positive edge if they differ in exactly one coordinate, and are adjacent with a negative edge if they differ in all coordinates. 
 \end{proposition}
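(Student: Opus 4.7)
The plan is to proceed by induction on $k$. For the base case $k=1$, the vertex set $\mathbb{Z}_2^1=\{0,1\}$ has exactly one pair of vertices, and this pair differs both in exactly one coordinate and in all (one) of its coordinates, so the coordinate description produces two parallel edges of opposite signs, which is precisely the digon ${\rm SPC}(1)$.

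For the inductive step, I assume that ${\rm SPC}(k-1)$ is realised by the coordinate description on $\mathbb{Z}_2^{k-1}$ and consider ${\rm SPC}(k)={\rm EDC}({\rm SPC}(k-1))$. I would define the bijection $\phi:V({\rm SPC}(k))\to\mathbb{Z}_2^k$ by $\phi(v^+)=(v,0)$ and $\phi(v^-)=(\overline{v},1)$, where $\overline{v}:=v+\mathbf{1}$ denotes the componentwise complement in $\mathbb{Z}_2^{k-1}$. I would then verify edge by edge that $\phi$ is sign-preserving. First, each negative edge $v^+v^-$ of the EDC becomes $\bigl((v,0),(\overline{v},1)\bigr)$, which differs in all $k$ coordinates and hence is negative in the coordinate description. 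Second, the positive edges $u^+v^+$ and $u^-v^-$ coming from a positive edge $uv$ of ${\rm SPC}(k-1)$ (so $u$ and $v$ differ in a single coordinate of $\mathbb{Z}_2^{k-1}$) become $\bigl((u,0),(v,0)\bigr)$ and $\bigl((\overline{u},1),(\overline{v},1)\bigr)$, each differing in a single coordinate. Third, the positive edges $u^+v^-$ and $u^-v^+$ coming from a negative edge $uv$ of ${\rm SPC}(k-1)$ (so $v=\overline{u}$) become $\bigl((u,0),(u,1)\bigr)$ and $\bigl((\overline{u},1),(\overline{u},0)\bigr)$, each differing only in the last coordinate. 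To close the inductive step I would finish with the reverse inclusion, a short case analysis on whether the differing coordinate(s) include the last one.

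The main obstacle, and the only subtle point, is the choice of the bijection: the naive identification $v^-\mapsto(v,1)$ would send the unique negative EDC-edge $v^+v^-$ to a pair differing in exactly one coordinate, contradicting the coordinate prescription. The twist $v^-\mapsto(\overline{v},1)$ fixes this, and is equivalent to first switching ${\rm EDC}({\rm SPC}(k-1))$ at all vertices $v^-$ (producing a switching-equivalent signed graph by Theorem~\ref{signs-equal}) and then applying the naive identification. Once this bijection is in place, the rest is a routine enumeration of the four edge types produced by the EDC construction.
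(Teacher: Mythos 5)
Your proof is correct. The paper itself gives no argument for this proposition -- it is explicitly left to the reader and attributed to~\cite{NSZ20} -- so there is nothing to compare against, but your induction through the recursion ${\rm SPC}(k)={\rm EDC}({\rm SPC}(k-1))$ with the complemented identification $v^-\mapsto(\overline{v},1)$ is exactly the intended verification: it correctly sends the matching edges $v^+v^-$ to antipodal pairs, and the remaining case analysis (including the surjectivity check on edges whose differing coordinates do or do not include the last one) goes through as you describe. The only point worth flagging is the base of the induction at $k=2$, where ${\rm SPC}(1)$ is a multigraph and the two parallel edges of the digon produce the two distinct positive edges $0^+1^+$ and $0^+1^-$; your bijection handles this consistently with the paper's remark that the EDC and homomorphism definitions extend to multiedges.
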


 \subsection{No-homomorphism lemma and homomorphism bounds for signed graph classes}

 A given (signed) graph $(B,\pi)$ \emph{bounds} a class of (signed) graphs if every graph in the class admits a homomorphism to $(B,\pi)$, and $(B,\pi)$ is said to be \emph{a bound} for this class. We now introduce the following concept, that will be useful to express the existence of homomorphism bounds for signed graph classes.
 
 \begin{definition}
 	\label{def:C-complete}
 	Given a class $\mathcal{C}$ of signed graphs and a signed graph $(B, \pi)$, we say that $(B, \pi)$ is \emph{$\mathcal{C}$-complete} if every member $(G, \sigma)$ of $\mathcal{C}$ satisfying $g_{ij}(G, \sigma)\geq g_{ij}(B, \pi)$ for every $ij \in \mathbb{Z}_2^2$, admits a homomorphism to $(B, \pi)$.
 \end{definition}
 
 
 Having developed our terminology, a conjecture of the second author \cite{Naserasr2007} and B. Guenin\cite{Guenin2005}, in extension of the Four-Color Theorem, can be restated as follows. 
 
 
 \begin{conjecture}\label{conj:MappingToPC(k)}
 	Let $\p$ be the class of signed planar graphs and let $k\geq 1$ be an integer. Then ${\rm SPC}(k)$ is $\p$-complete.
 \end{conjecture}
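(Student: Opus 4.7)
The plan is to proceed by induction on $k$. The base case $k=1$ is trivial: $\SPC(1)$ is the digon with $g_{00}=2$, $g_{01}=g_{11}=1$ and $g_{10}=\infty$, so the walk-girth hypotheses reduce to $(G,\sigma)$ having no negative even closed walk (equivalently $(G,\sigma)\in\g_{11}$); switching to an all-negative signature via Theorem \ref{signs-equal} and Theorem \ref{thm:SignPreservHom-Hom}, the homomorphism is produced by sending every vertex of $G$ to either end of the digon. For $k=2$, by Proposition \ref{pro:SPC-coordinate-def}, $\SPC(2)$ has underlying graph $K_4$ and is, up to switching, the all-negative $K_4$; the walk-girth conditions force $(G,\sigma)\in\g_{11}$, and the reduction via Theorem \ref{thm:SignPreservHom-Hom} makes this exactly the statement that every planar graph is properly $4$-colorable. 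I would invoke the Four-Color Theorem here.

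For the inductive step, assuming $\SPC(k-1)$ is $\p$-complete, the target is the same for $\SPC(k)=\mathrm{EDC}(\SPC(k-1))$. The natural route is to appeal to the paper's central conjecture asserting that $\mathrm{EDC}$ preserves planar-completeness within the class of signed graphs having no positive odd closed walk. To apply it, I first verify that $\SPC(k-1)$ lies in that class. Using Proposition \ref{pro:SPC-coordinate-def}, any closed walk in $\SPC(k-1)$ uses some number $n_j$ of positive edges of coordinate type $j$ and $m$ negative (all-coordinate) edges, and closedness in $\mathbb{Z}_2^{k-1}$ forces $n_j\equiv m\pmod{2}$ for every $j$. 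Hence if such a walk is positive (so $m$ is even) then every $n_j$ is also even and the length $m+\sum_j n_j$ is even. Thus $\SPC(k-1)$ has no positive odd closed walk, and the main conjecture would close the induction.

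The principal obstacle is that the main conjecture is itself open, and widely believed to be very hard, for $k\geq 3$; so the induction above is honest but conditional. A parallel, more direct approach I would pursue is to use Proposition \ref{pro:SPC-coordinate-def} to rephrase a homomorphism $(G,\sigma)\to \SPC(k)$ as a vertex-labelling $V(G)\to\mathbb{Z}_2^k$ whose edge-differences are prescribed, up to the permissible set $\{e_1,\ldots,e_k,\mathbf{1}\}$, by the signature, and then to dualize on a planar embedding so that the task becomes a nowhere-zero $\mathbb{Z}_2^k$-flow on the planar dual — a direct generalization of Tutte's reading of the Four-Color Theorem as a nowhere-zero $\mathbb{Z}_2^2$-flow on bridgeless cubic planar graphs. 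The hard part is that for $k\geq 3$ this dual problem no longer reduces to an edge-coloring of a cubic planar graph, and no planar-specific tool (Vizing, discharging in the style of Appel--Haken, Tait orientations) is known to apply; capturing the right topological constraint for higher-dimensional hypercube targets is the decisive missing step, and it is precisely this gap that has kept the conjecture open.
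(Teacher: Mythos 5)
The statement you are attempting is presented in the paper as a \emph{conjecture}, not a theorem: it is the Naserasr--Guenin extension of the Four-Color Theorem and remains open for $k\geq 3$, so there is no proof in the paper to compare yours against. Your write-up in fact concedes the decisive gap: the inductive step is conditional on the assertion that the Extended Double Cover preserves $\p$-completeness, which is exactly the (open) conjecture the paper proposes in its concluding section. What the paper actually proves is the analogous preservation statement for the far more restricted class $\SP$ of $K_4$-minor-free graphs (Section~\ref{sec:k4minorfree}), via girth-transformed distance graphs and $g$-closed sets of weighted triangles; that argument leans on the $2$-tree structure of partial $2$-trees and does not extend to planar graphs. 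So your induction establishes only the implication ``EDC preserves $\p$-completeness $\Rightarrow$ Conjecture~\ref{conj:MappingToPC(k)}'', which is the observation motivating the paper, not a proof of the conjecture; your alternative route via nowhere-zero $\mathbb{Z}_2^k$-flows on the planar dual likewise stops, as you say yourself, exactly where the known techniques stop.

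A smaller but genuine error: your base case $k=1$ miscomputes the walk-girths of the digon. The digon has no loops, so it has no closed walks of odd length at all; hence $g_{01}=g_{11}=\infty$ and $g_{00}=g_{10}=2$. The hypotheses of Lemma~\ref{lem:No-Hom-Lemma} therefore force $(G,\sigma)$ to have no odd closed walk, i.e., $G$ bipartite (the class $\g_{10}$), not the antibalanced class $\g_{11}$ as you claim; the mapping itself is then as you describe, sending each edge to the parallel edge of matching sign. Your $k=2$ analysis is correct ($\SPC(2)$ is, up to switching, the all-negative $K_4$, and the statement is equivalent to the Four-Color Theorem), and your verification that $\SPC(k-1)$ has no positive odd closed walk is also correct (it follows alternatively from Lemma~\ref{lem:girthEDC} by induction). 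But neither of your two routes for $k\geq 3$ closes the argument, so this remains a conditional reduction rather than a proof.
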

 
 In other words, the conjecture claims that when mapping signed graphs to ${\rm SPC}(k)$, the geometric condition of planarity and the necessary conditions of the no-homomorphism lemma (Lemma~\ref{lem:No-Hom-Lemma}) are, together, sufficient. Observe that, without the condition of planarity, the necessary conditions of Lemma~\ref{lem:No-Hom-Lemma} are far from being sufficient and in fact the ${\rm SPC}(k)$-homomorphism problem over the class of all signed graphs is an NP-hard problem, see \cite{BFHN17}. However, it is believed that the condition of planarity can be relaxed, e.g., a stronger conjecture is considered for signed graphs $(G, \sigma)$ where $G$ is a $K_5$-minor-free graph. An even stronger conjecture claims that the statement holds for every signed graph $(G, \sigma)$ as long as it has no $(K_5,-)$-signed minor~\cite{Guenin2005}.
 
 \subsection{Our results}
 
 The study of the relation between homomorphisms of planar (signed) graphs and the operation of Extended Double Cover is a recent development that captures some of the most prominent questions and conjectures in this area. We refer to~\cite{NSZ20} for details on this relation.
 
 In this work, in Section~\ref{sec:k4minorfree}, we study this relation when restricted to the class of $K_4$-minor-free graphs. This supports some of the above-mentioned conjectures and extends some previous works~\cite{BFN17,BFN19}. We reformulate the existing results in the unified language of signed graphs.

 We then prove that if a connected signed graph with no positive odd walk is $\SP$-complete (where $\SP$ denotes the class of $K_4$-minor-free graphs), then so is its Extended Double Cover.

 Then, in Section~\ref{sec-newbound}, towards an optimization, we find a class of nearly optimal bounds for the signed $K_4$-minor-free graphs with given girth condition. The orders of the bounds are quadratic in terms of the girth condition, which is optimal up to a constant factor and improves upon the known bounds from~\cite{BFN17,BFN19}.
 
 To better present our work, we first need to extend the terminology of signed graphs to weighted signed graphs: this will be done in Section~\ref{sec:weighted}.
 
\section{Weighted signed graphs}\label{sec:weighted}

In this section we introduce the notion of weighted signed graph, with emphasis on a special case where the weights refer to the distances and signs in a signed graph on the same set of vertices.

A \emph{weighted signed graph} $(G, \omega)$ is a graph $G$ together with an assignment $w$ of weights in $\{\pm 1, \pm 2, \ldots, \pm k\}$ (for some integer $k$) to the edges of $G$. To emphasize on the maximum absolute value, we may refer to a weighted signed graph as a $k$-weighted signed graph. Observing that 1-weighted signed graphs are simply signed graphs, we have the following extensions of this terminology. 

The \emph{length} of a walk, cycle or path in a weighted signed graph is the sum of absolute values of the weights of its edges. The sign of such structures is the product of all the signs of weights associated to the edges of said structure (considering multiplicity). Switching at a cut $(X, V\setminus X)$ is to change the sign of the weight of each edge in this cut, while the absolute value remains the same. The walk-girth of type $ij$ (for $ij\in \mathbb{Z}_2^2$) of a signed graph $(G,\sigma)$, denoted $g_{ij}(G,\sigma)$, is defined similarly as before and again remains invariant under switching. Thus, this definition similarly leads to three special classes of weighted signed graphs, that we denote by $\mathcal{C}_{ij}$, $ij\in \{01, 10, 11\}$. It will be clear from the context whether signed graphs or weighted signed graphs are considered. Thus, when speaking of weighted signed graphs, the class $\mathcal{C}_{ij}$, $ij\in \{01, 10, 11\}$, consists of those weighted signed graphs in which every closed walk is of type either $00$ or $ij$. As before, each member of $\mathcal{C}_{01}$ has a switching equivalent weighted signed graph where all weights are positive. Similarly, each member of $\mathcal{C}_{11}$ has a switching equivalent weighted signed graph where all edges are negative. Finally, each member of $\mathcal{C}_{10}$ admits a natural bipartition: having picked an arbitrary vertex $x$, all vertices connected to $x$ by a path of odd length form one part, and all vertices connected to $x$ by a path of even length (including $x$ itself) form the other part.

A homomorphism of a weighted signed graph $(G, \omega)$ to a weighted signed graph $(H, \theta)$ is defined analogously: that is, a mapping of the vertices and edges of $(G, \omega)$ to, respectively, the edges and vertices of $(H, \theta)$ which preserves the following fundamental properties, $(i)$. adjacencies, $(ii)$. incidences, $(iii)$. absolute values of weights and $(iv)$. signs of closed walks. It can be proved similarly that this is the same as a switching $(G, \omega')$ of $(G, \omega)$ after which one must preserve the sign of each edge rather than signs of closed walks. The basic no-homomorphism lemma works here as well.

\begin{lemma}\label{lem:Weighted No-Hom-Lemma}
	Given weighted signed graphs $(G, \omega)$ and $(H, \theta)$, if $(G,\omega) \to (H, \theta)$, then for each $ij\in \mathbb{Z}_2^2$, we have $g_{ij}(G,\omega)\geq g_{ij}(H, \theta)$. 
\end{lemma}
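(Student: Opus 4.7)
The plan is to mimic, almost verbatim, the proof of the unweighted No-Homomorphism Lemma (Lemma~\ref{lem:No-Hom-Lemma}), after carefully recording how each of the four properties preserved by a homomorphism of weighted signed graphs interacts with the weighted definitions of length, sign, and type of a closed walk.

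First, I would fix $ij \in \mathbb{Z}_2^2$ and assume $g_{ij}(G,\omega)<\infty$, as otherwise the inequality is vacuous. I then pick a shortest closed walk $W=e_1e_2\cdots e_\ell$ of type $ij$ in $(G,\omega)$, so its length is $L(W)=\sum_{s=1}^\ell |\omega(e_s)|=g_{ij}(G,\omega)$ and its sign is $\mathrm{sgn}(W)=\prod_{s=1}^\ell \mathrm{sgn}(\omega(e_s))$. Given a homomorphism $f\colon (G,\omega)\to (H,\theta)$, the preservation of incidences and adjacencies (properties (i) and (ii)) guarantees that the image sequence $f(W):=f(e_1)f(e_2)\cdots f(e_\ell)$ is a closed walk in $H$.

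Next, I read off the required equality of types from properties (iii) and (iv). Property (iii) (preservation of absolute values of weights) gives $|\theta(f(e_s))|=|\omega(e_s)|$ for every $s$, hence $L(f(W))=L(W)$; in particular $f(W)$ and $W$ have the same length-parity. Property (iv) (preservation of signs of closed walks) gives $\mathrm{sgn}(f(W))=\mathrm{sgn}(W)$. Combining these two facts, $f(W)$ is a closed walk of type $ij$ in $(H,\theta)$ whose length equals $g_{ij}(G,\omega)$, so $g_{ij}(H,\theta)\le L(f(W))=g_{ij}(G,\omega)$, as required.

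Since the argument is a direct translation of the signed case, I do not anticipate any genuine obstacle. The one subtlety worth underlining is the convention that the sign of a weighted walk is the product (with multiplicity of traversal) of the signs of the weights of the edges used, and that the parity in the type $ij$ refers to the parity of the total length $L(W)$, not of the number of edges traversed. Under this reading, (iii) is exactly what is needed for matching parities and (iv) is exactly what is needed for matching signs. Should one prefer to avoid invoking (iv) directly, the switching-equivalent reformulation of a weighted homomorphism mentioned just before the lemma offers an alternative: after an appropriate switching of $(G,\omega)$, $f$ preserves the sign of every individual edge, and $\mathrm{sgn}(f(W))=\mathrm{sgn}(W)$ then follows edge-by-edge.
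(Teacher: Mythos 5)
Your argument is correct and is exactly the one the paper intends: the paper states this lemma without proof, treating it as the direct analogue of Lemma~\ref{lem:No-Hom-Lemma}, whose justification (the image of a closed walk is a closed walk of the same length and sign, here with length measured by the preserved absolute values of weights) is precisely what you have written out. No discrepancy to report.
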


The focus of this work is on weighted signed graphs $(G, \omega)$ which have no positive closed walk of odd weight. If we assume, furthermore, that $G$ is connected, then we easily observe that all negative closed walks in $(G, \omega)$ are of the same parity. In other words either $(G,\omega) \in \mathcal{C}_{10}$ or $(G,\omega) \in \mathcal{C}_{11}$. The smallest weight of a negative closed walk in such a weighted signed graph is of importance:

\begin{definition}
	\label{g-wide}
	For a positive integer $g$, a weighted signed graph $(G,w)$ is said to be \emph{$g$-wide} if for each choice of $ij\in\mathbb{Z}_2^2$, we have $g_{ij}(G,w)\geq g_{ij}(C_{-g})$.
\end{definition}

A special family of weighted signed graphs are the ones where the weight of each edge represents the distance in a signed graph on the same set of vertices (normally the signed graph induced by edges of weight $1$ and $-1$). To make this definition formal enough we use an extended notion of distance as defined in~\cite{BFN19}, where in a signed graph, we allow two vertices to have a negative distance.

\begin{definition}
	\label{def-eq-algebraic distance}
	For a signed graph $(G,\sigma)$,  the {\em algebraic distance} between two vertices $u$ and $v$ is defined as follows:  
	\begin{equation*}
		ad_{(G,\sigma)}(u,v)=\left\{\begin{array}{ll}
			d_G(u,v), & \text{if there is a positive $(u-v)$ path of length $d_G(u,v)$}, \\
			-d_G(u,v),  & \text{otherwise.}
		\end{array}
		\right.
	\end{equation*} 
\end{definition}

Observe that the algebraic distance of $x$ and $y$ will change sign if a switching is done at one of $x$ or $y$, but not both. All other switchings preserve the algebraic distance.

\begin{definition}
 	Given a signed graph $(G, \sigma)$ and a weighted signed graph $(G',\omega)$, where $V(G')=V(G)$, we say that $(G',\omega)$ is a \emph{partial $(G, \sigma)$-distance graph} if for every edge $uv$ of $G'$, $\omega(uv)=ad_{(G,\sigma)}(u,v)$. If for every edge $xy$ of $G'$, $|\omega(x, y)| \leq  k$ we say that $(G',\omega)$ is a \emph{$k$-partial $(G, \sigma)$-distance graph}.
\end{definition}

Given a signed graph $(G, \sigma)$, the partial $(G, \sigma)$-distance graph and the Extended Double Cover of such graph, which will be introduced in Section \ref{EDC of weighted graph-I}, play important roles in this paper. 
 
\subsection{Extended Double Cover of weighted signed graphs}
\label{EDC of weighted graph-I}

We  extend the notion of Extended Double Covers of  signed graphs to Extended Double Covers of weighted signed graphs as follows:

\begin{definition}\label{def-WeightedEDC}
	Given a weighted signed graph $(G, \omega)$, the \emph{Extended Double Cover} of $(G, \omega)$, denoted  ${\rm EDC}(G,\omega)$, is defined to be the weighted signed graph on vertex set $V^+\cup V^-$, where $V^+ := \{v^+: v \in V(G)\}$ and $V^- := \{v^-: v \in V(G)\}$. Vertices $x^+$ and $x^-$ are adjacent by an edge of weight $-1$; for each pair $xy$ of adjacent vertices of $G$, there will be four more edges in ${\rm EDC}(G,\omega)$, namely $x^+y^+, x^+y^-, x^-y^+, x^-y^-$ whose weights are determined as follows. If $xy$ is an edge of weight $p>0$, then $x^+y^+$ and $x^-y^-$ are both of weight $p$ and $x^+y^-$, $x^-y^+$ are both of  weight $-(p+1)$.  If $xy$ is an edge of weight $-p<0$, then $x^+y^-$ and $x^-y^+$ are both of weight $p$ and $x^+y^+$, $x^-y^-$ are both of weight $-(p+1)$.
\end{definition}

As in the case of Extended Double Covers of signed graphs, the Extended Double Cover of a signed weighted graph adds a geometric view to the notion of switching: to switch at a vertex $v$ of $(G, \omega)$ is equivalent to switch the role of $v^+$ and $v^-$ in ${\rm EDC}(G,\omega)$.

The following is a key property of this extended notion of Extended Double Cover.

\begin{lemma}\label{lem:girthEDC}
	Given a weighted signed graph $(G, \omega)$ we have:
	\begin{itemize}
		\item $g_{01}({\rm EDC}(G,\omega))=g_{01}(G,\omega)$.
		\item $g_{10}({\rm EDC}(G,\omega))=g_{11}(G,\omega)+1$.
		\item $g_{11}({\rm EDC}(G,\omega))=g_{10}(G,\omega)+1$.
	\end{itemize}
\end{lemma}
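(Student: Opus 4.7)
The plan is to establish each equality by pairing an upper bound from lifting short walks of $(G,\omega)$ into ${\rm EDC}(G,\omega)$ with a matching lower bound from projecting short walks of ${\rm EDC}(G,\omega)$ back to $(G,\omega)$. The projection is the natural two-to-one map $\pi\colon V({\rm EDC}(G,\omega))\to V(G)$ sending $v^{\pm}\mapsto v$: each non-vertical edge $x^\epsilon y^{\epsilon'}$ of the EDC projects onto $xy$ in $G$, and each vertical edge $v^+v^-$ is collapsed, so any closed walk $W$ of ${\rm EDC}(G,\omega)$ projects to a closed walk $\widetilde{W}$ of $(G,\omega)$. For the lift, I would take a closed walk $W_0$ of $(G,\omega)$, start at the $+$-copy of its initial vertex, and switch sides each time a negative edge of $(G,\omega)$ is traversed, so that every lifted non-vertical edge is positive in ${\rm EDC}(G,\omega)$; if $W_0$ is positive the lift is already closed and of length $\ell(W_0)$, while if $W_0$ is negative the lift ends at the opposite copy of its initial vertex, so one vertical edge (weight $-1$) must be appended to close it, producing a closed walk of length $\ell(W_0)+1$ and sign $-1$.

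The heart of the argument is a bookkeeping that relates $W$ and $\widetilde{W}$. Reading Definition~\ref{def-WeightedEDC}, a non-vertical edge of ${\rm EDC}(G,\omega)$ corresponding to $xy\in E(G)$ has length $|\omega(xy)|$ and sign $+1$ precisely when its side-behaviour matches the sign of $xy$ (positive edge on the same side, or negative edge on opposite sides), and otherwise has length $|\omega(xy)|+1$ and sign $-1$. Writing $m$ for the number of vertical edges of $W$, $s$ for the number of non-vertical edges of $W$ that change side, and $k$ for the number of non-vertical edges of $W$ that are negative in the EDC, the closedness of $W$ forces $s+m\equiv 0\pmod 2$, and a direct count yields
$$\ell(W)=\ell(\widetilde{W})+k+m\qquad\text{and}\qquad\sigma(W)=(-1)^{k+m}.$$
Using $s+m\equiv 0\pmod 2$, one verifies that $k+m$ has the same parity as the number of negative underlying edges traversed by $\widetilde{W}$; hence $\sigma(W)=\sigma(\widetilde{W})$, while the length difference $\ell(W)-\ell(\widetilde{W})$ is even when $\widetilde{W}$ is positive and odd (so at least $1$) when $\widetilde{W}$ is negative.

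With this in hand the three formulas follow. A positive odd walk in ${\rm EDC}(G,\omega)$ projects to a positive walk of $(G,\omega)$ of the same parity, hence odd, with $\ell(\widetilde{W})\leq \ell(W)$, so $g_{01}({\rm EDC}(G,\omega))\geq g_{01}(G,\omega)$, while the lift of a shortest type-$01$ walk of $(G,\omega)$ gives the reverse inequality. A negative even walk in ${\rm EDC}(G,\omega)$ projects to a negative walk of opposite parity, hence odd, hence of type $11$ in $(G,\omega)$, and satisfies $\ell(W)\geq \ell(\widetilde{W})+1$, yielding $g_{10}({\rm EDC}(G,\omega))\geq g_{11}(G,\omega)+1$; the lift of a shortest type-$11$ walk with one vertical edge appended achieves this bound. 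The last equality $g_{11}({\rm EDC}(G,\omega))=g_{10}(G,\omega)+1$ is perfectly symmetric, with the roles of types $10$ and $11$ in $(G,\omega)$ interchanged.

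The main delicate point I expect is the parity bookkeeping itself: the three sources of length and sign discrepancy — same-side non-vertical edges, different-side non-vertical edges, and vertical edges — must combine, via the closedness relation $s+m\equiv 0\pmod 2$, to yield simultaneously the identity $\sigma(W)=\sigma(\widetilde{W})$ and the fact that $\ell(W)-\ell(\widetilde{W})$ has the parity opposite to that of $\sigma(\widetilde{W})$. It is precisely this parity twist — sign preserved by projection while the length parity flips exactly when the underlying walk is negative — that produces the ``$+1$'' appearing in the last two formulas of the lemma.
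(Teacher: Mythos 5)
Your proof is correct, and the lifting half coincides with the paper's: both constructions send a closed walk $W_0$ of $(G,\omega)$ into ${\rm EDC}(G,\omega)$ by starting at the $+$-copy, switching sides across each negative edge of $W_0$ so that every lifted non-vertical edge is positive and of the same length, and closing up with one vertical edge of weight $-1$ exactly when $W_0$ is negative. Where you genuinely diverge is the converse (lower-bound) direction. The paper argues structurally: it normalizes a length-minimizing closed walk of ${\rm EDC}(G,\omega)$ so that it uses at most one negative/vertical edge (two vertical edges can be shortcut without changing sign or parity) and then identifies such walks as exactly the lifts ${\rm EDC}^{\pm}(W)$. You instead project an \emph{arbitrary} closed walk $W$ of the EDC to a closed walk $\widetilde W$ of $(G,\omega)$ and do a parity count: with $m$ vertical edges, $s$ side-changing non-vertical edges and $k$ negative non-vertical edges, closedness gives $s+m\equiv 0\pmod 2$, and since a non-vertical EDC edge is negative (and one unit longer) exactly when its side-behaviour disagrees with the sign of the underlying edge, one gets $k+m\equiv n\pmod 2$ with $n$ the number of negative underlying edges of $\widetilde W$; hence $\sigma(W)=\sigma(\widetilde W)$ while $\ell(W)-\ell(\widetilde W)=k+m$ is nonnegative, and odd (so at least $1$) precisely when $\widetilde W$ is negative. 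I checked this bookkeeping and it is sound; it buys a uniform inequality valid for every closed walk, with no need to argue that minimal walks have a special form or to characterize which walks are lifts, at the cost of losing the explicit bijective picture that the paper reuses in Lemma~\ref{lemma:closed p+i-walk in EDC}. One sentence worth adding to your write-up: a closed walk of type $01$, $10$ or $11$ in ${\rm EDC}(G,\omega)$ must contain a non-vertical edge (an all-vertical closed walk is of type $00$), so its projection $\widetilde W$ is a genuine nonempty closed walk and the comparison of walk-girths is legitimate.
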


\begin{proof}
	All three claims are consequences of the following observation. Given a signed closed walk $W$ of $(G, \omega)$ there is a natural association with two closed walks, denoted ${\rm EDC}^+(W)$   and ${\rm EDC}^-(W)$ in ${\rm EDC}(G,w)$. If the starting vertex of $W$ is $x$, then the starting point of ${\rm EDC}^+(W)$ is $x^+$ and that of ${\rm EDC}^-(W)$ is $x^-$. 
	The descriptions of ${\rm EDC}^+(W)$, and that of ${\rm EDC}^-(W)$ except for the starting point, are the same. 
	If the $i^{th}$ vertex of $W$ is $v$, then the $i^{th}$ vertex  of ${\rm EDC}^+(W)$ is one of $v^+$ or $v^-$, the choice of which is implied from the following procedure. 
	
	Assume that at step $i$ of $W$ we are at vertex $v$ and that $v'\in \{v^+, v^-\}$  is determined as the $i^{th}$ vertex of ${\rm EDC}^+(W)$. Let $u$ be the next vertex on $W$. Then choose the vertex $u' \in \{u^+, u^-\}$ as follows: if the edge $vu$ in $W$ is positive, then $u'$ has the same sign as $v'$, otherwise it has the opposite sign. If $W$ is a positive closed walk, this process ends with $x^+$ and we have ${\rm EDC}^+(W)$. But  if $W$ is a negative closed walk, this process ends with $x^-$ in which case we must add the negative edge $x^+x^-$ in order to have a closed walk ${\rm EDC}^-(W)$. In this case, ${\rm EDC}^-(W)$ is a negative closed walk of length 1 more than that of $W$, thus of different parity. 
	
	It is easily observed that each closed walk of ${\rm {\rm EDC}}(G,\omega)$ that uses at most one negative edge is either of the form ${\rm EDC}^+(W)$ or of the form ${\rm EDC}^-(W)$ for a closed walk $W$ of $(G, \omega)$. Furthermore, if two edges of the form $v^+v^-$ are used, then we can create a closed walk of shorter length which is of the same sign and the same parity.  Thus the minimum length closed walks of a given type can use at most one edge of type $v^+v^-$. The three claims then follow.	
\end{proof}
 
Following the proof of Lemma \ref{lem:girthEDC}, we have the following lemma (note that each vertex can be viewed as the starting vertex).

\begin{lemma}
	\label{lemma:closed p+i-walk in EDC}
	Let $(G, \sigma)$ be a signed graph with $x,y$ two vertices of $(G,\sigma)$ in a cycle $C$ of length $g$ in $(G,\sigma)$. If $C$ is positive, then each of the pairs $(x^+,y^+)$ and $(x^-,y^-)$ is also in a positive cycle of length $g$ in ${\rm EDC}(G, \sigma)$. If $C$ is negative, then each of the pairs $(x^+,y^+)$, $(x^-,y^-)$, $(x^+,y^-)$ and $(x^-,y^+)$ is in a negative cycle of length $g+1$ in ${\rm EDC}(G, \sigma)$. 
\end{lemma}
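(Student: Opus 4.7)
The plan is to apply the lifting maps ${\rm EDC}^+(\cdot)$ and ${\rm EDC}^-(\cdot)$ developed in the proof of Lemma~\ref{lem:girthEDC} directly to $C$, viewed as a closed walk based at $x$. A first sanity check is that, since $C$ has no repeated vertex, the same holds for its lifts: the $V(G)$-projection of each lifted walk is exactly $C$, so ${\rm EDC}^+(C)$ and ${\rm EDC}^-(C)$ are genuine cycles in ${\rm EDC}(G,\sigma)$, not merely closed walks.

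For the positive case ($\sigma(C)=+$), Lemma~\ref{lem:girthEDC} tells us that ${\rm EDC}^+(C)$ is a positive cycle of length $g$ closing at $x^+$, and ${\rm EDC}^-(C)$ is a positive cycle of length $g$ closing at $x^-$. By the lifting rule (stay on the same side at a positive edge, flip at a negative edge), the side at which each lift visits $y$ is determined by the running parity of negative edges along the $x$-to-$y$ subpath of $C$. The global involution $v^+\leftrightarrow v^-$ is a sign-preserving automorphism of ${\rm EDC}(G,\sigma)$ (it is exactly the switching at the whole vertex set of $(G,\sigma)$), so it pairs cycles through $(x^+,y^+)$ with cycles through $(x^-,y^-)$, and pairs cycles through $(x^+,y^-)$ with cycles through $(x^-,y^+)$. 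Reading off the sign of an $x$-$y$ subpath of $C$ then identifies, among these paired classes, the two positive cycles of length $g$ produced by the two lifts, as claimed.

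For the negative case ($\sigma(C)=-$), the lift ${\rm EDC}^+(C)$ terminates at $x^-$ after $g$ steps, and concatenating with the negative vertical edge $x^+x^-$ gives, exactly as in the proof of Lemma~\ref{lem:girthEDC}, a negative cycle of length $g+1$ passing through $x^+$, $x^-$ and one lift of $y$. Running the same construction with $y$ as the base vertex yields a second negative cycle of length $g+1$ passing through $y^+$, $y^-$ and one lift of $x$. Since $\sigma(C)=-$ forces the two $x$-$y$ subpaths of $C$ to have opposite signs, the lift of $y$ reached from $x$ and the lift of $x$ reached from $y$ are of opposite types in the two constructions; consequently the four pairs $(x^+,y^+)$, $(x^-,y^-)$, $(x^+,y^-)$, $(x^-,y^+)$ are collectively covered by the two negative cycles produced.

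The main bookkeeping point, and the only delicate step, is matching each lifted cycle to the prescribed pair of lifts of $x$ and $y$; this reduces to computing the sign of an explicit subpath of $C$ together with using the $+\leftrightarrow-$ automorphism of ${\rm EDC}(G,\sigma)$, so no new machinery beyond Lemma~\ref{lem:girthEDC} is needed.
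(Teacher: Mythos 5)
Your overall strategy---lifting $C$ through the ${\rm EDC}^{+}$ and ${\rm EDC}^{-}$ constructions from the proof of Lemma~\ref{lem:girthEDC}---is exactly what the paper intends; it offers no separate proof and simply states that the lemma follows from that construction. But your bookkeeping in the negative case, which is the only case the paper later uses, does not close. The cycle lifted from base $x^{+}$ contains $x^{+}$, $x^{-}$ and exactly one lift $y^{\epsilon}$ of $y$, so it certifies only the pairs $(x^{+},y^{\epsilon})$ and $(x^{-},y^{\epsilon})$; the cycle lifted from base $y^{+}$ contains $y^{+}$, $y^{-}$ and exactly one lift $x^{\delta}$ of $x$, certifying $(x^{\delta},y^{+})$ and $(x^{\delta},y^{-})$. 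A pair $(x^{a},y^{b})$ is covered if and only if $b=\epsilon$ or $a=\delta$, so the pair $(x^{-\delta},y^{-\epsilon})$ is covered by neither cycle: two cycles of your type always miss exactly one of the four required pairs, regardless of how the signs work out, and the ``opposite types'' remark does not rescue this. The repair is to stay based at $x$ and use \emph{both} lifts, ${\rm EDC}^{+}(C)$ started at $x^{+}$ and ${\rm EDC}^{-}(C)$ started at $x^{-}$ (equivalently, the image of the first cycle under the involution $v^{+}\leftrightarrow v^{-}$ you already introduced): each contains both $x^{+}$ and $x^{-}$, and they contain the two different lifts of $y$, so together they cover all four pairs.

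In the positive case, your sentence ``reading off the sign of an $x$--$y$ subpath then identifies \dots\ the two positive cycles \dots\ as claimed'' hides a genuine case distinction. Since $\sigma(C)=+$, the two $x$--$y$ arcs of $C$ have equal sign. If both are positive, the lifts do pass through $(x^{+},y^{+})$ and $(x^{-},y^{-})$ and you are done. If both are negative, the lifts pass through $(x^{+},y^{-})$ and $(x^{-},y^{+})$ instead, so no lift of $C$ witnesses the claimed pairs; worse, the statement itself can then fail. For example, let $G$ be the $8$-cycle $u_{0}u_{1}\cdots u_{7}$ with exactly the edges $u_{0}u_{1}$ and $u_{7}u_{0}$ negative, and take $x=u_{0}$, $y=u_{4}$: here ${\rm EDC}(G,\sigma)$ is a prism over an $8$-cycle in which $x^{+}$ and $y^{+}$ lie in different $8$-cycle layers at cyclic offset $4$, and a short case analysis on how many vertical edges an $8$-cycle of the prism can use shows that no $8$-cycle contains both. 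So the positive half of the lemma requires an extra hypothesis (for instance that the $x$--$y$ arcs of $C$ are positive) or a weakened conclusion; this does not affect the rest of the paper, which only invokes the negative case, but your write-up should flag the obstruction rather than assert the claim unconditionally.
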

 
\subsection{Extended Double Cover of signed graphs}
\label{sec:EDC of signed graphs}

In this section, we focus on the Extended Double Covers of signed graphs and we prove some useful properties. From Definition~\ref{def-EDC}, we have the following observation.


\begin{observation}\label{obs:walk in EDC}
	Let $(G, \sigma)$ be a signed graph with $x,y$ two vertices of $(G,\sigma)$ and an ($x-y$)-walk $W$  of length $p$ in $(G,\sigma)$. If $W$ is positive, then, in ${\rm EDC}(G, \sigma)$, there exisst an ($x^+-y^+$)-walk and an $(x^--y^-$)-walk, both of which are positive and of length $p$, and an ($x^+-y^-$)-walk and an ($x^--y^+$)-walk, both of which are negative and of length $p+1$. 
	If $W$ is negative, then, in ${\rm EDC}(G, \sigma)$, there exist an ($x^+-y^-$)-walk and an ($x^--y^+$)-walk, both of which are positive and of length $p$, and an ($x^+-y^+$)-walk and an ($x^--y^-$)-walk, both of which are negative and of length $p+1$. 
\end{observation}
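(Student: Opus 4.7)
The plan is to reuse the lifting construction already described in the proof of Lemma~\ref{lem:girthEDC}. Given a walk $W = v_0 v_1 \cdots v_p$ in $(G,\sigma)$ with $v_0 = x$ and $v_p = y$, and given a choice of starting lift $x^{\varepsilon_0} \in \{x^+, x^-\}$, I would define inductively a lifted walk $\widetilde{W}$ in ${\rm EDC}(G,\sigma)$ as follows: if the $i$-th vertex of $\widetilde{W}$ is $v_i^{\varepsilon_i}$ and the edge $v_i v_{i+1}$ of $W$ has sign $s \in \{+,-\}$ in $\sigma$, then set $\varepsilon_{i+1} = \varepsilon_i \cdot s$. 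By Definition~\ref{def-EDC}, the corresponding edge $v_i^{\varepsilon_i} v_{i+1}^{\varepsilon_{i+1}}$ is one of the four positive edges of ${\rm EDC}(G,\sigma)$ joining fibres over $v_i$ and $v_{i+1}$, so $\widetilde{W}$ is a well-defined walk of length exactly $p$ using only positive edges of the cover.

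Since $\widetilde{W}$ uses no edge of the form $v^+v^-$, it is a positive walk in ${\rm EDC}(G,\sigma)$. The final sign $\varepsilon_p$ equals $\varepsilon_0 \cdot \sigma(W)$, so the terminal vertex is $y^{\varepsilon_0}$ when $W$ is positive and $y^{-\varepsilon_0}$ when $W$ is negative. Taking $\varepsilon_0 = +$ then $\varepsilon_0 = -$ already yields the two positive walks of length $p$ claimed in each case: an $(x^+, y^+)$-walk and an $(x^-, y^-)$-walk if $W$ is positive, and an $(x^+, y^-)$-walk and an $(x^-, y^+)$-walk if $W$ is negative.

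For the two remaining (negative, length $p+1$) walks in each case, I would simply append the negative edge $y^+ y^-$ (or $y^- y^+$) of ${\rm EDC}(G,\sigma)$ at the end of the appropriate lift. Concretely, if $W$ is positive, extending the $(x^+, y^+)$-walk by $y^+y^-$ gives an $(x^+, y^-)$-walk of length $p+1$ containing exactly one negative edge and hence of negative sign, and similarly for the $(x^-, y^+)$-walk; if $W$ is negative, the analogous appending produces the required $(x^+,y^+)$- and $(x^-,y^-)$-walks of length $p+1$ and negative sign. The verification is purely bookkeeping, so I do not anticipate any genuine obstacle; the only care needed is to check that the recursive sign rule $\varepsilon_{i+1} = \varepsilon_i \cdot s$ is consistent with the description of the four positive edges between fibres in Definition~\ref{def-WeightedEDC}, which is immediate.
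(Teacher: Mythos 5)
Your proof is correct and takes essentially the same route as the paper: the paper presents this observation as an immediate consequence of Definition~\ref{def-EDC}, via exactly the sign-propagating lift of a walk that it makes explicit in the proof of Lemma~\ref{lem:girthEDC}, which is what you spell out (including the appended negative edge $y^+y^-$ for the length-$(p+1)$ walks). One trivial slip: your closing reference should be to Definition~\ref{def-EDC} rather than Definition~\ref{def-WeightedEDC}, though the two agree on $1$-weighted signed graphs, so nothing is affected.
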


The following computes the algebraic distance between two vertices in ${\rm EDC}(G,\sigma)$. 

\begin{proposition}
	\label{pro:ad in EDC}
	Let $(G,\sigma)$ be a $g$-wide signed graph with $x,y$ two of its vertices that are in a common negative cycle of length $g$, and $ad_{(G,\sigma)}(x,y)=p$. Then, the following statements hold.
	\begin{itemize}
		\item If $p > 0$,  then $ad_{{\rm EDC}(G,\sigma)}(x^+,y^+)=ad_{{\rm EDC}(G,\sigma)}(x^-,y^-)=p$. Moreover:
		\begin{itemize}
			\item if $d_{G}(x,y)=\lfloor\frac{g}{2}\rfloor$, then $ad_{{\rm EDC}(G,\sigma)}(x^+,y^-)=ad_{{\rm EDC}(G,\sigma)}(x^-,y^+)=g-p = \lceil\frac{g}{2}\rceil$;
			\item otherwise, $ad_{{\rm EDC}(G,\sigma)}(x^+,y^-)=ad_{{\rm EDC}(G,\sigma)}(x^-,y^+)=-p-1$. 
		\end{itemize}
		\item 	If $p < 0$,  then $ad_{{\rm EDC}(G,\sigma)}(x^+,y^-)=ad_{{\rm EDC}(G,\sigma)}(x^-,y^+)=-p$. Moreover:
		\begin{itemize}
			\item if $d_{G}(x,y)=\lfloor\frac{g}{2}\rfloor$, then $ad_{{\rm EDC}(G,\sigma)}(x^+,y^+)=ad_{{\rm EDC}(G,\sigma)}(x^-,y^-)=g+p=\lceil\frac{g}{2}\rceil$;
			\item otherwise, $ad_{{\rm EDC}(G,\sigma)}(x^+,y^+)=ad_{{\rm EDC}(G,\sigma)}(x^-,y^-)=p-1$.
		\end{itemize} 
	\end{itemize}  
\end{proposition}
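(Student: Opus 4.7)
The plan is to establish each equality in the conclusion as a pair of matching upper and lower bounds on the algebraic distance in ${\rm EDC}(G,\sigma)$, using two complementary ingredients: (i) the walk-lifting from Observation \ref{obs:walk in EDC}, which produces specific walks in the cover out of walks in $G$ together with their signs; and (ii) a projection argument based on the natural vertex map $\pi: V({\rm EDC}(G,\sigma)) \to V(G)$ sending both $v^+$ and $v^-$ to $v$. Upper bounds and sign witnesses come from (i), lower bounds on the distance come from (ii), and the $g$-wide hypothesis is used precisely to exclude the ``parasitic'' short walks that would otherwise spoil the claimed values.

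For the upper bounds I would feed two $(x,y)$-paths of $G$ into Observation \ref{obs:walk in EDC}: the shortest path, of length $|p|$ and of the sign of $p$ by definition of the algebraic distance; and, when $d_G(x,y) = \lfloor g/2 \rfloor$, the other arc of the given negative $g$-cycle through $x,y$, which has length $g - |p| = \lceil g/2 \rceil$ and the opposite sign. Observation \ref{obs:walk in EDC} then immediately exhibits, for each pair in $\{x^+,x^-\} \times \{y^+,y^-\}$, a walk of ${\rm EDC}(G,\sigma)$ of the length and sign appearing in the claim.

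For the lower bounds, the map $\pi$ sends each non-cover edge $v^\alpha u^\beta$ of ${\rm EDC}(G,\sigma)$ to $uv \in E(G)$ and crushes each cover edge $v^+v^-$ to the vertex $v$. Hence a walk $W$ of length $\ell$ in ${\rm EDC}(G,\sigma)$ that uses $k$ cover edges projects to a walk $\pi(W)$ of length $\ell - k$ in $G$, yielding the basic inequality $\ell \geq d_G(x,y) + k$. Moreover, since the cover edges are the only negative edges of ${\rm EDC}(G,\sigma)$, the sign of $W$ is $(-1)^k$, while the sign of $\pi(W)$ is $(-1)^{n_-}$ with $n_-$ the number of originally-negative $G$-edges traversed, and the side ($+$ or $-$) of the endpoint of $W$ is determined by the parity of $n_- + k$. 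Together, these four pieces of information will let me read off, for any hypothetical ``too short'' walk, both the corresponding walk in $G$ and the length, sign and parity of a closed walk obtained by concatenating it with one of the lifts produced in the previous step.

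The proof then splits along the two sign choices of $p$ and the two sub-cases on $d_G(x,y)$. The main obstacle, and the only point where $g$-wideness is genuinely needed, is excluding in each sub-case a walk witnessing a value of $ad_{{\rm EDC}(G,\sigma)}$ different from the claim: such a walk, concatenated with the canonical realizing walk of the opposite sign between the same endpoints, produces a closed walk of length $2|p|$ or $2|p|+1$ in $G$ whose sign and parity are determined, and its existence is contradicted by comparing $g_{10}(G,\sigma)$ and $g_{11}(G,\sigma)$ to the values $g_{10}(C_{-g})$ and $g_{11}(C_{-g})$, which depend on the parity of $g$. The sub-case $d_G(x,y) = \lfloor g/2 \rfloor$ is the one that behaves differently from the generic case: there the ``shortcut'' through the cover edge $x^+x^-$ is no longer the cheapest option, and the lift of the opposite arc of the $g$-cycle becomes the tight witness of length $\lceil g/2 \rceil$.
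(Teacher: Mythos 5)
Your proposal is correct and follows essentially the same route as the paper's proof: upper bounds and sign witnesses come from lifting the shortest $(x,y)$-path and the opposite arc of the negative $g$-cycle via Observation~\ref{obs:walk in EDC}, and the parasitic positive short paths are excluded by concatenating with the canonical walk of opposite sign and invoking wideness. The only (harmless) difference is that you make the projection-based lower bound on $d_{{\rm EDC}(G,\sigma)}$ explicit and derive the contradiction from the $g$-wideness of $(G,\sigma)$ itself, whereas the paper states the distances directly and phrases the contradiction in ${\rm EDC}(G,\sigma)$ using its $(g+1)$-wideness from Lemma~\ref{lem:girthEDC}.
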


\begin{proof} Without loss of generality, by the symmetries of ${\rm EDC}(G,\sigma)$, we only focus on the pairs $(x^+,y^+)$ and $(x^+,y^-)$. As $ad_{(G,\sigma)}(x,y)=p$, and $x,y$ are in a negative cycle of length $g$, by the definition of algebraic distance, we know that $|p| \leq \lfloor\frac{g}{2}\rfloor$. Since $(G,\sigma)$ is $g$-wide, by Lemma \ref{lem:girthEDC}, ${\rm EDC}(G,\sigma)$ is $(g+1)$-wide.
	
	First assume that $p >0$. Then, there exist a positive $(x-y)$-path of length $d_G(x,y)=p$ and a negative $(x-y)$-path of length $g-p$ in $(G,\sigma)$. By Observation~\ref{obs:walk in EDC}, in  ${\rm EDC}(G,\sigma)$, there exist a positive  $(x^+-y^+)$-path $P_1$ of length $p$, a negative $(x^+-y^-)$-path $P_2$ of length $p+1$ and  a positive $(x^+-y^-)$-path $P_3$ of length $g-p$.  So the distances between $x^+$ and $y^+$, $x^+$ and $y^-$ in the underlying graph of ${\rm EDC}(G,\sigma)$ are $p$ and $\min\{p+1,g-p\}$, respectively. By Definition~\ref{def-eq-algebraic distance}, $ad_{{\rm EDC}(G,\sigma)}(x^+,y^+)=p$. If $p=\lfloor\frac{g}{2}\rfloor$, then $p+1=\lfloor\frac{g}{2}\rfloor+1\geq \lceil\frac{g}{2}\rceil= g-p$, hence $\min\{p+1,g-p\}=g-p$  and $ad_{{\rm EDC}(G,\sigma)}(x^+,y^-)=g-p=\lceil\frac{g}{2}\rceil$. Otherwise, we know that $p \leq \lfloor\frac{g}{2}\rfloor-1$, which implies that the distance between $x^+$ and $y^-$ in the underlying graph of ${\rm EDC}(G,\sigma)$ is $\min\{p+1,g-p\}=p+1$. Note that there is no positive $(x^+-y^-)$ path of length $p+1$, since otherwise such path, together with $P_2$, consists of a cycle of length $2(p+1)=2\lfloor\frac{g}{2}\rfloor \leq g$, a contradiction to the fact that  ${\rm EDC}(G,\sigma)$ is $(g+1)$-wide. Thus, by Definition~\ref{def-eq-algebraic distance}, $ad_{{\rm EDC}(G,\sigma)}(x^+,y^-)=-(p+1)$.
	
	Now assume that $p < 0$. By similar arguments as above, there exist a negative $(x-y)$-path of length $d_G(x,y)=-p$ and a positive $(x-y)$-path of length $g+p$ in $(G,\sigma)$. By Observation~\ref{obs:walk in EDC}, there exist a positive  $(x^+-y^-)$-path $P_4$ of length $-p$, a negative $(x^+-y^+)$-path $P_5$ of length $-p+1$ and  a positive $(x^+-y^+)$-path $P_6$ of length $g+p$. Recall that ${\rm EDC}(G,\sigma)$ is $(g+1)$-wide and $p \geq -\lfloor\frac{g}{2}\rfloor$, so the distances between $x^+$ and $y^+$, $x^+$ and $y^-$ in the underlying graph of ${\rm EDC}(G,\sigma)$ are $\min\{-p+1,g+p\}$ and $-p$, respectively. Thus, $ad_{{\rm EDC}(G,\sigma)}(x^+,y^-)=-p$. If $p=-d_{G}(x,y)=-\lfloor\frac{g}{2}\rfloor$, then $-p+1= \lfloor\frac{g}{2}\rfloor+1 \geq \lceil \frac{g}{2}\rceil=g+p$, so $ad_{{\rm EDC}(G,\sigma)}(x^+,y^+)=g+p=\lceil\frac{g}{2}\rceil$. Otherwise, $p \geq -\lfloor\frac{g}{2}\rfloor +1$, hence $\min\{-p+1,g+p\}=-p+1$. Also observe that there is no positive $(x^+-y^+)$-path of length $-p+1$, since otherwise such path together with $P_5$ consist of a negative cycle of length $2(-p+1) \leq 2\lfloor\frac{g}{2}\rfloor \leq g$, a contradiction to the fact that  ${\rm EDC}(G,\sigma)$ is $(g+1)$-wide. Thus $ad_{{\rm EDC}(G,\sigma)}(x^+,y^+)=-(-p+1)=p-1$.
\end{proof}
 

\section{$K_4$-minor-free graphs}\label{sec:k4minorfree}

A \emph{$2$-tree} is a graph that can be built from the complete graph $K_2$ in a sequence $G_0=K_2, G_1,\ldots, G_t$ where $G_i$ is obtained from $G_{i-1}$ by adding a new vertex joined to two adjacent vertices of $G_{i-1}$, thus forming a new triangle. A partial $2$-tree is a subgraph of a $2$-tree. 
It is well-known that a graph is $K_4$-minor-free if and only if it is a \emph{partial $2$-tree} (see for example~\cite{DR17}). The class of $K_4$-minor free graphs is also known as the class of \emph{series-parallel graphs}, see for example \cite{BODLAENDER19981}. Thus we will use the abbreviation $\SP$ to denote this class of graphs.

As observed from the definition of 2-trees, the triangle is the building block of edge-maximal $K_4$-minor free graphs. When a girth condition is imposed on a signed $K_4$-minor free graph  $(G,\sigma)$, then $G$ will no longer be edge-maximal, but rather a partial 2-tree. To take advantage of the structure of such signed graphs then, in \cite{BFN17} and \cite{BFN19}, weighted 2-trees are employed. Next we present these techniques in a uniform language of signed graphs (with no positive walk of odd length). We rather use the terminology developed in \cite{CN20} while extending it to signed graphs.

\subsection{Weighted triangles and $g$-wideness}

Given a positive integer $g$ with $g\geq 3$, and three integers $p$, $q$ and $r$ satisfying $1\leq |p|, |q|, |r|\leq g-1$, the signed graph $T_g(p,q,r)$ is built as follows. Let $C_{g,p}$ be a negative cycle of length $g$ with a selected pair $x_1$ and $y_1$ of vertices such that one of the two $x_1-y_1$ paths in $C_{g,p}$ is of length $|p|$ and has the same sign as $p$, and the other, which is of length $g-|p|$, has the opposite sign as $p$. Define $C_{g,q}$ similarly where selected vertices $y_2$ and $z_1$ are connected by a $|q|$-path and $C_{g,r}$ with selected vertices $z_2$ and $x_2$ which are connected by an $|r|$-path. We define the signed graph  $T_g(p,q,r)$ to be the signed graph obtained from $C_{g,p}$, $C_{g,q}$ and $C_{g,r}$ by identifying $x_1$ and $x_2$ (to form the new vertex $x$), $y_1$ and $y_2$ (to form the new vertex $y$), $z_1$ and $z_2$ (to form the new vertex $z$).  
See the left picture in Figure~\ref{Examples of Tgpqr} for an example.

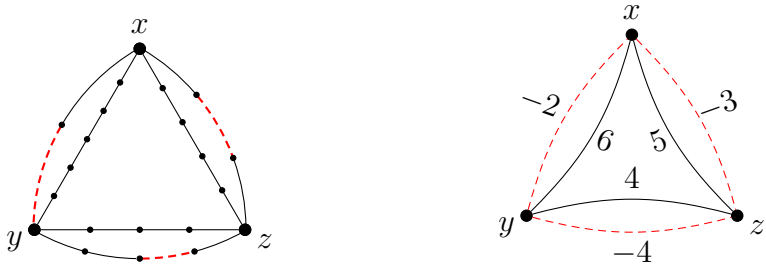
\begin{figure}[H]
		\centering 
			\begin{minipage}{0.5\textwidth} 
					\centering 
			\begin{tikzpicture}[>=latex,	
				Bignode/.style={circle, draw=black,fill=black, minimum size=1.5mm, inner sep=0pt},
				roundnode/.style={circle, draw=black,fill=black, minimum size=0.7mm, inner sep=0pt}]	
				\node [Bignode] (z) at (330:1.6){};
				\node [Bignode] (x) at (90:1.6){};	
				\node [Bignode] (y) at (210:1.6){}; 
				\node at (330:1.9){$z$};
				\node at (90:1.9){$x$};
				\node at (210:1.9){$y$}; 
				\draw (x)--(z)
				\foreach \t in {1/5,2/5,3/5,4/5}{pic [draw,pos=\t] {code={\node [roundnode]{};}}};
				\draw ($(y)+(59.5:2.7833)$) arc (59.5:40.5:2.7833);
				\draw[red,thick,densely dashed] ($(y)+(39.5:2.7833)$) arc (39.5:20.5:2.7833);
				\draw ($(y)+(19.5:2.783)$) arc (19.5:0.5:2.783);
				\node  [roundnode] at ($(y)+(40:2.783)$) {}; 
				\node  [roundnode] at ($(y)+(20:2.783)$) {}; 	
				
				\draw (x)--(y)
				\foreach \t in {1/6,2/6,3/6,4/6,5/6}{pic [draw,pos=\t] {code={\node [roundnode]{};}}};;
				\draw ($(z)+(120.5:2.783)$) arc (120.5:149.5:2.783);
				\draw[red,thick,densely dashed] ($(z)+(150.5:2.783)$) arc (150.5:179.5:2.783);
				\node  [roundnode] at ($(z)+(150:2.783)$){}; 
				
				\draw (y)--(z)
				\foreach \t in {1/4,2/4,3/4}{pic [draw,pos=\t] {code={\node [roundnode]{};}}};
				
				\draw ($(x)+(241:2.783)$) arc (241:269:2.783);
				
				\draw[red,thick,densely dashed] ($(x)+(270.5:2.783)$) arc (270.5:284.5:2.783);
				\draw ($(x)+(285.5:2.783)$) arc (285.5:299.5:2.783);
				\node  [roundnode] at ($(x)+(285:2.783)$){}; 
				\node [roundnode] at ($(x)+(255:2.783)$){}; 
				\node  [roundnode] at ($(x)+(270:2.783)$){}; 
				
				\node [Bignode] (z) at (330:1.6){};
				\node [Bignode] (x) at (90:1.6){};	
				\node [Bignode] (y) at (210:1.6){}; 
			\end{tikzpicture}  
		\end{minipage} 
	\begin{minipage}{0.4\textwidth} 
			\begin{tikzpicture}[>=latex,	
			Bignode/.style={circle, draw=black,fill=black, minimum size=1.5mm, inner sep=0pt}]	
			\node [Bignode] (z) at (330:1.6){};
			\node [Bignode] (x) at (90:1.6){};	
			\node [Bignode] (y) at (210:1.6){}; 
		 
			\draw (y) [bend left =15] to (z); 
			\draw[red,densely dashed] (y) [bend right =15] to (z); 
			
			\draw[red,densely dashed] (x) [bend left =15] to (z); 
			\draw  (x) [bend right = 15] to (z);
			
			\draw[red,densely dashed] (y) [bend left = 15] to (x); 
			\draw (y) [bend right = 15] to (x);
			
			\node[rotate=20] at (30:1.3) {$-3$};
			\node[rotate=-20] at (150:1.4){$-2$};
			\node at (270:1.3){$-4$};
			
			\node[rotate=20] at (30:0.4){$5$};
			\node[rotate=-20] at (150:0.4){$6$};
			\node at (270:0.3){$4$};
			
			\node at (330:1.9){$z$};
			\node at (90:1.9){$x$};
			\node at (210:1.9){$y$}; 
		\end{tikzpicture} 
	\end{minipage}
	\caption{$T_g(p,q,r)$ with $g=8$, $p=6$, $q=-4$, and $r=-3$. Dashed (red) edges are negative.}	
	\label{Examples of Tgpqr}
\end{figure}
 
We have a few immediate observations: $(i)$. If $p'$ is the integer satisfying $|p'|=g-|p|$ and $pp'<0$,  then $C_{g,p}$ and $C_{g, p'}$ are identical and thus $T_g(p',q,r)$ is isomorphic to $T_g(p,q,r)$. $(ii)$. A switching at an internal vertex of an ($x-y$)-path would result in a different presentation of $T_g(p,q,r)$, but up to a switching they are isomorphic. $(iii)$. A switching at $x$, $y$ or $z$ results, respectively, in $T_g(-p,q,-r)$, $T_g(-p,-q,r)$ and $T_g(p,-q, -r)$ which are also isomorphic $T_g(p,q,r)$ up to a switching. 

Note that $T_g(p,q,r)$ could be simply presented as a weighted triangle (multiple edges are allowed here), see the right picture in Figure~\ref{Examples of Tgpqr}. But considering $(i)$, between $p$ and $p'$, we would normally choose the one whose value is positive. Therefore, we let $\bigtriangleup(p,q,r)$ denote a weighted triangle whose edges are of weight $p$, $q$ and $r$, and extend the definition of $g$-wideness as follows.

\begin{definition}
	Given  a positive integer $g$, $g\geq 3$, and three integers $p$, $q$ and $r$ satisfying $1\leq |p|, |q|, |r|\leq g-1$, we say a weighted triangle $\bigtriangleup(p,q,r)$, or the triple $(p,q,r)$, is $g$-wide if $T_g(p,q,r)$ is $g$-wide.
\end{definition}

In other words, $\bigtriangleup(p,q,r)$ is $g$-wide if $T_g(p,q,r)$ satisfies the following two conditions:

\begin{itemize}
	\item There are no positive odd cycles in $T_g(p,q,r)$.
	\item Each of the negative cycles of $T_g(p,q,r)$ is of the same parity as $g$ and is, furthermore, of length at least $g$. 
\end{itemize}

In our work we will need to consider properties of triangles and edges. For a uniform writing, in the definition of a $g$-wide triple $(p,q,r)$ we may allow them to be 0 as well. If $p=0$, then in the construction of $T_{g}(p,q,r)$ the vertices $x$ and $y$ are identified and a negative cycle of length $g$ is added on the identified vertex which we may ignore. Then for $\bigtriangleup(0,q,r)$ to be $g$-wide the positive paths corresponding to $q$ and $r$ must be of the same length. Thus we may assume $q=r$. Therefore, in the rest of this work $T(0,r,r)$, $r\leq g-1$, is $g$-wide. Triples of the form $T(0,r,r)$ will, in essence of it, represent the essential edges of the weighted graphs we will work with where $r$ would be the weight of the corresponding edge.

That $\bigtriangleup(p,q,r)$ is $g$-wide depends only on the values of $p$, $q$, $r$ and $g$. We have already seen that when $p=0$, triples of the form $(0,r,r)$ are the only $g$-wide triples. For a  triple satisfying $1\leq |p|, |q|, |r|\leq g-1$ there are a number of ways to check if it is $g$-wide. In this work we will use the test provided in the next proposition.

\begin{proposition}
	\label{g-good triples}
	Given integers $g$, $p$, $q$ and $r$ satisfying $1\leq |p|, |q|, |r|\leq g-1$, the following statements hold. 
	\begin{enumerate}[label= {(\arabic*)},ref=(\theenumi)]
	\item \label{g-good-product-positive} If $pqr>0$, then the weighted triangle $\bigtriangleup(p,q,r)$ is g-wide if and only if $|p|+|q|+|r| \equiv 0 \pmod 2$ and  $\max\{2|p|,2|q|,2|r|\} \leq |p|+|q|+|r| \leq 2g$. 
		
		\item \label{g-good-product-negative} If $pqr<0$, then  the weighted triangle $\bigtriangleup(p,q,r)$ is g-wide if and only if $|p|+|q|+|r| \equiv g \pmod  2$ and  $g \leq |p|+|q|+|r| \leq g+\min\{2|p|,2|q|,2|r|\}$.
	\end{enumerate} 
\end{proposition}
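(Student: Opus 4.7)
The plan is to enumerate all simple cycles of $T_g(p,q,r)$, compute their lengths and signs, and translate the $g$-wideness conditions into arithmetic constraints on $s:=|p|+|q|+|r|$, $g$, and the individual $|p|,|q|,|r|$. By construction, $T_g(p,q,r)$ has exactly eleven simple cycles: the three \emph{base} cycles $C_{g,p},C_{g,q},C_{g,r}$ (each negative, of length $g$), and eight \emph{combined} cycles through $x,y,z$ obtained by selecting, for each pair $\{x,y\},\{y,z\},\{z,x\}$, one of its two connecting paths. For the pair $\{x,y\}$, the short path has length $|p|$ and sign $\mathrm{sgn}(p)$, while the long one has length $g-|p|$ and opposite sign $-\mathrm{sgn}(p)$; analogously for the other pairs.

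First I would reduce $g$-wideness to two conditions on these eleven simple cycles: (a) no positive cycle has odd length, and (b) every negative cycle has length at least $g$ and parity equal to that of $g$. This uses the standard cycle-space decomposition: the set of edges traversed an odd number of times in any closed walk $W$ forms an even-degree edge set, hence an edge-disjoint union of simple cycles $C_1,\ldots,C_m$ with $\sigma(W)=\prod_i\sigma(C_i)$ and $\ell(W)\geq\sum_i\ell(C_i)$. Thus (a) rules out positive odd closed walks, while (b) forces every negative closed walk to contain some negative $C_i$ of length at least $g$, and to have parity matching $g$. Since the three base cycles satisfy (b) automatically, only the eight combined cycles will impose constraints.

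Next I parameterize each combined cycle by $(\varepsilon_p,\varepsilon_q,\varepsilon_r)\in\{S,L\}^3$ and let $k$ be the number of $L$-coordinates. A direct check shows the sign of that cycle equals $\mathrm{sgn}(pqr)\cdot(-1)^k$, because each long substitution flips the contributed sign, while its length is $s$ if $k=0$; one of $g+s-2|p|,g+s-2|q|,g+s-2|r|$ if $k=1$; one of $2g-s+2|p|,2g-s+2|q|,2g-s+2|r|$ if $k=2$; and $3g-s$ if $k=3$.

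Finally I split on the sign of $pqr$ and apply (a)--(b). When $pqr>0$, even-$k$ cycles are positive and odd-$k$ cycles negative: (a) at $k=0$ forces $s$ even; (b) at $k=1$ gives $s\geq 2|p|, 2|q|, 2|r|$, i.e.\ $s\geq\max\{2|p|,2|q|,2|r|\}$; (b) at $k=3$ gives $s\leq 2g$; the parities at $k=1,3$ are implied by $s$ even, and the $k=2$ cycles are then automatically positive and even. This reproduces (1). When $pqr<0$, the cycle signs swap: the $k=0$ cycle is negative, forcing $s\geq g$ and $s\equiv g\pmod 2$; the $k=2$ cycles are negative, forcing $s\leq g+2|p|, g+2|q|, g+2|r|$, i.e.\ $s\leq g+2\min\{|p|,|q|,|r|\}$; and the positive cycles at $k=1,3$ are automatically even given $s\equiv g\pmod 2$. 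This reproduces (2). The main (albeit minor) subtlety is justifying the cycle-space reduction at the start so that we may restrict attention to the eleven simple cycles; the rest is bookkeeping enumeration.
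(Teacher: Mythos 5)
Your proof is correct and follows essentially the same route as the paper: both enumerate the eleven simple cycles of $T_g(p,q,r)$ (the three base cycles of length $g$ plus the eight cycles through $x,y,z$), compute their lengths and signs, and translate the two $g$-wideness conditions into the stated parity and inequality constraints, with your explicit justification of the reduction from closed walks to simple cycles being a point the paper leaves implicit. The only divergence is in part (2), where the paper reduces to part (1) via the isomorphism $T_g(p,q,r)\cong T_g(p',q',r')$ with $|p'|=g-|p|$ and $pp'<0$, whereas you re-run the enumeration directly; both are fine.
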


\begin{proof}~\\
	\textbf{\boldmath{$(1)$ $pqr > 0$}.} There are exactly four positive cycles in $T_g(p,q,r)$, and their lengths, which are of the same parity, are: $|p|+|q|+|r|$, $g-|p|+g-|q|+|r|$, $g-|p|+|q|+g-|r|$, and $|p|+g-|q|+g-|r|$.

	Suppose first that $\bigtriangleup(p,q,r)$ is $g$-wide. By the definition, we have $g_{ij}(T_{g}(p,q,r)) \geq g_{ij}(C_{-g})$ and since $g_{01}(C_{-g}) = \infty$, there is no positive odd cycle, in other words $|p|+|q|+|r|$ is even. Except the four cycles we mentioned above, all the other cycles in $T_{g}(p,q,r)$ are negative. Moreover, the three  cycles  containing exactly two of $\{x,y,z\}$ are all of length $g$.  The four negative cycles  containing all three vertices $x,y,z$ are of length $g-|p|+g-|q|+g-|r|$, $g-|p|+|q|+|r|$, $|p|+g-|q|+r$, $|p|+|q|+g-|r|$. Since the negative girth of $T_g(p,q,r)$ is $g$, we have: $g-|p|+g-|q|+g-|r|\geq g$ which is to say $|p|+|q|+|r|\leq 2g$. Assuming, without loss of generality, that $\max\{|p|,|q|,|r|\}=|p|$, the condition $g-|p|+|q|+|r|\geq g$ implies that $\max\{2|p|,2|q|,2|r|\}=2|p|\leq |p|+|q|+|r|$.
	
	Conversely, assume that $|p|+|q|+|r|$ is even and $\max\{2|p|,2|q|,2|r|\} \leq |p|+|q|+|r|\leq 2g$. We shall show that $g_{ij}(T_{g}(p,q,r)) \geq g_{ij}(C_{-g})$, for any $ij \in \{01,10,11\}$. The case $ij=01$ follows from the fact that $|p|+|q|+|r|$ is even and the argument discussed in the first paragraph. The three cycles containing exactly two of $x,y,z$ are of length $g$, and all of them are negative.  The four negative cycles containing $x,y,z$ are of length $g-|p|+g-|q|+g-|r|$, $g-|p|+|q|+|r|$, $g-|q|+|p|+|r|$, $g-|r|+|p|+|q|$. By the assumptions, all of these four values are of the same parity as $g$, and also are at least $g$, which implies that $g_{1j}(T_{g}(p,q,r)) \geq g_{1j}(C_{-g})$, for each $j \in \{0,1\}$.
	
	\medskip\noindent\textbf{\boldmath{$(2)$ $pqr < 0$}.}  Let $p'=-\frac{p}{|p|}(g-|p|)$, $q'=-\frac{q}{|q|}(g-|q|)$,  $r'=-\frac{r}{|r|}(g-|r|)$, so $pqr<0$ if and only if $p'q'r'>0$.  Since $T_g(p,q,r)$ is isomorphic to  $T_g(p',q',r')$, $\bigtriangleup(p,q,r)$ is $g$-wide if and only if $T(p',q',r')$ is $g$-wide. Therefore, by \ref{g-good-product-positive}, $\bigtriangleup(p,q,r)$ is $g$-wide if and only if $|p'|+|q'|+|r'| = 0 \pmod 2$ and $\max\{2|p'|,2|q'|,2|r'|\} \leq |p'|+|q'|+|r'| \leq 2g$. Equivalently, $g-|p|+g-|q|+g-|r| = 0 \pmod 2$ and $\max\{2g-2|p|,2g-2|q|,2g-2|r|\} \leq g-|p|+g-|q|+g-|r| \leq 2g$, after simplification, we have $|p|+|q|+|r| = g \pmod  2$ and  $g \leq |p|+|q|+|r| \leq g+\min\{2|p|,2|q|,2|r|\}$.
\end{proof}

\subsection{A test for $\mathcal{SP}$-completeness}

 We denote by $\mathcal{L} _g$ the set of ordered triples $(p,q,r)$, satisfying $|p|, |q|, |r|\leq g-1$ and such that $\bigtriangleup(p,q,r)$ is $g$-wide. Observe that because of the condition $|p|, |q|, |r|\leq g-1$ we have less than $8g^3$ non-isomorphic weighted triangles (or edges) $\bigtriangleup(p,q,r)$, thus $[\mathcal{L}_g|\leq 8g^3$.

Recall that for each $p$ with $1\leq |p| \leq g-1$, if $p'$ is the integer satisfying $|p'|=g-|p|$ and $pp'<0$, then $T_g(p', q,r)$ is the same as $T_g(p,q,r)$. Thus  a triple $(p,q,r)$ can be represented in $\mathcal{L} _g$ in 8 possible ways among which there is a unique presentation where $p,q,r\geq 1$. 

Similarly, recall that in the definition of $(G,\sigma)$-distance graph, for each weighted edge, the weight represents the algebraic distance between the two endpoints in $(G,\sigma)$, which could be either positive or negative. Another special weighted signed graph is obtained from $(G,\sigma)$ by using only positive weights, which represent the length of positive paths joining pairs of vertices on a shortest negative cycle of $(G,\sigma)$ containing both of them. For this point, we define the following.

\begin{definition}
	\label{def-girth transformed}
	Given a $g$-wide signed graph $(G, \sigma)$ and a weighted signed graph $(G',\omega)$, where $V(G')=V(G)$ and $G'$ is such that for each edge $xy$ of $G'$, the pair $x,y$ is in a negative cycle of length $g$ in $(G, \sigma)$, we say that $(G',\omega)$ is a \emph{girth-transformed $(G,\sigma)$-distance graph} if for every edge $uv$ of $G'$, $\omega(uv)=f_g(ad_{(G,\sigma)}(u,v))$, where $f_g$ is defined on $-\gc+1 \leq x \leq \gl$, $x \neq 0$, as following: 
	$$f_g(x) = 
	\begin{cases}
		x, & \text{if $x >0$,} \\
		g+x, &  \text{otherwise,}
	\end{cases}$$	 
 	If for every edge $xy$ of $G'$, $\omega(uv) \leq  k$, we say that $(G',\omega)$ is a \emph{$k$-partial girth-transformed $(G, \sigma)$-distance graph}.
\end{definition}
  
Now, with this transformation, we can address weighted signed graphs with only positive weights. Also, some known theorems can be also restated in this language. The following definition is a restatement of the ``all $g$-good property" in \cite{BFN17} and \cite{BFN19}.
\begin{definition}
	\label{def:triangle g closed}
	Given a $g$-wide weighted signed graph $(G, \omega)$ satisfying $1 \leq \omega(e) \leq g-1$ for every edge $e$, a set $\mathcal T$ of triangles of $G$ is said to be \emph{$g$-closed} if the following condition is satisfied:   
	
	Denoting by $\mathcal{E}$ the set of weighted edges of the triangles in $\mathcal T$, for each edge $xy \in \mathcal{E}$ (assuming $\omega(xy)=p$) and for each triple $(p, q, r)\in \mathcal{L} _g$, there is a triangle $xyz \in  \mathcal T$ such that   $\omega(zx)=q$ and $\omega(zy)=r$, or $\omega(zx)=g-q$ and $\omega(zy)=g-r$. 
\end{definition}

\begin{remark}
	The only difference between all $g$-good property and $g$-closed is that in the former one, (i). $1 \leq |w(e)| \leq  \lfloor \frac{g}{2} \rfloor$; (ii). the last condition is   $w(zx) = q$, $w(zy) = r$ or $w(zx) = -q$, $w(zy) = -r$.  
\end{remark}

Note that in the condition above, if $q \neq r$, then the order of $p,q,r$ matters. To be precise, in such a case, say for $(p,r,q)$, while the definition implying existence of a vertex $z$ satisfying $\omega(zx)=q$ and $\omega(zy)=r$, or $\omega(zx)=g-q$ and $\omega(zy)=g-r$ by consider the triple $(p,r,q)$  instead of $(p,q,r)$ there must also be a vertex $z'$ satisfying  $\omega(xz')=r$ and $\omega(yz')=q$, or $\omega(xz')=g-r$ and $\omega(yz')=g-q$.

The following then is a uniform restatement of results of \cite{BFN17} and \cite{BFN19}.  (see also \cite{CN20})
 
\begin{theorem} 
	\label{thm: old bounds iff}
	A $g$-wide signed graph $(B,\pi)$ is $\SP$-complete if and only if there exists a
	$\gl$-partial $(B,\pi)$-distance graph with a nonempty set $\mathcal{T}$ having  all $g$-good property.  
\end{theorem}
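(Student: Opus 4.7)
My plan is to prove both directions by exploiting the correspondence between signed $K_4$-minor-free graphs and weighted signed 2-trees (via algebraic distance), together with the observation that each signed graph $T_g(p,q,r)$ encoding a $g$-wide triple is itself $K_4$-minor-free and thus available as a ``gadget'' inside the class $\SP$.

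For the sufficiency direction ($\Leftarrow$), assume we are given a $\gl$-partial $(B,\pi)$-distance graph $(B',\omega')$ whose nonempty set $\mathcal{T}$ of triangles satisfies the all $g$-good property. Given a $g$-wide signed graph $(G,\sigma) \in \SP$, I would first complete $G$ into a 2-tree $\tilde G \supseteq G$ and define a weighted signature $\tilde \omega$ by letting $\tilde \omega(e) = \sigma(e) \cdot 1$ on original edges and $\tilde \omega(uv) = ad_{(G,\sigma)}(u,v)$ on the ``virtual'' edges $uv \in E(\tilde G) \setminus E(G)$. A short verification using the $g$-wideness of $(G,\sigma)$ shows that $(\tilde G, \tilde \omega)$ is itself $g$-wide, that every edge-weight has absolute value at most $\gl$, and that every triangle of $\tilde G$ realizes a triple in $\mathcal{L}_g$. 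I then build a homomorphism $(\tilde G, \tilde \omega) \to (B, \pi)$ by induction on the 2-tree construction of $\tilde G$: for the base case, map an initial triangle of $\tilde G$ to any triangle in $\mathcal{T}$; for the inductive step, when a new vertex $w$ is attached to an existing edge $uv$ of weight $p$ to form a triangle with weight triple $(p,q,r) \in \mathcal{L}_g$, apply the all $g$-good property to the image edge $u'v'$ in $\mathcal{T}$ to obtain $w' \in V(B)$ completing the required image triangle with $(\omega'(u'w'), \omega'(v'w')) \in \{(q,r), (-q,-r)\}$; in the second case, switch at $w$ (permissible via Theorem~\ref{thm:SignPreservHom-Hom}) to align signs. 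The restriction of the resulting weighted homomorphism to $G$ yields the desired homomorphism $(G,\sigma) \to (B,\pi)$.

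For the necessity direction ($\Rightarrow$), I would use that each $T_g(p,q,r)$ with $(p,q,r) \in \mathcal{L}_g$ is $g$-wide and $K_4$-minor-free (having only three branch vertices linked by subdivided arcs, its tree-width is at most $2$). Mapping each $T_g(p,q,r)$ separately does not produce a coherent triangle set, so I would instead build a single signed graph $H$ by iteratively gluing copies of $T_g(p,q,r)$'s along pairs of vertices: start from one $T_g(p_0,q_0,r_0)$ for any choice of $(p_0,q_0,r_0) \in \mathcal{L}_g$, and at each step, glue a fresh $T_g(p,q,r)$ along every pair $(x,y)$ of currently present vertices at algebraic distance $p$ for which a needed triple $(p,q,r) \in \mathcal{L}_g$ is not yet realized. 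Each gluing is a 2-sum of series-parallel graphs, so $H$ remains $K_4$-minor-free; a direct check via Proposition~\ref{g-good triples} confirms that $H$ remains $g$-wide. Applying $\SP$-completeness to $H$ (or to its finite subgraphs, with the process stabilizing by finiteness of $\mathcal{L}_g$ and $V(B)$) produces a homomorphism $H \to (B,\pi)$; the images of the triangle-triples of $H$ then give a nonempty set $\mathcal{T}$ of triangles in the associated $\gl$-partial $(B,\pi)$-distance graph satisfying the all $g$-good property by construction.

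The main obstacle is the careful handling of signs of algebraic distances in both directions. In sufficiency, the inductive construction may offer a candidate $w'$ whose algebraic distances to $u', v'$ have opposite signs to the target weights $q, r$; it is precisely the ``or'' clause of the all $g$-good property, combined with the freedom to switch at $w$ in $(G,\sigma)$, that resolves this. In necessity, one must verify that the iterated gluing process preserves both $K_4$-minor-freeness and $g$-wideness while ensuring the eventual image in $(B,\pi)$ is ``closed'' in the $g$-good sense: these follow respectively from closure of series-parallel graphs under 2-sum, from the $g$-wideness of each $T_g(p,q,r)$ (Proposition~\ref{g-good triples}), and from finiteness of $\mathcal{L}_g$ together with $V(B)$.
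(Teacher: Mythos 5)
First, note that the paper itself contains no proof of this theorem: it is explicitly introduced as ``a uniform restatement of results of \cite{BFN17} and \cite{BFN19}'', so there is no internal argument to compare yours against. Your sketch reconstructs the architecture of the proofs in those references (sufficiency by induction over a $2$-tree/series-parallel decomposition driven by the closure property of $\mathcal{T}$; necessity by realizing the gadgets $T_g(p,q,r)$ inside $\SP$ and reading off images of a suitably closed family), and at that level it is the right strategy. However, each direction as written rests on a claim that is false.

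For sufficiency, the assertion that a $2$-tree completion $\tilde G\supseteq G$ weighted by algebraic distances automatically has all weights of absolute value at most $\gl$ and all triangles realizing triples of $\mathcal{L}_g$ does not hold for any choice of completion. Take $(G,\sigma)=C_{-n}$ with $n\equiv g\pmod 2$ and $n>3g$: it is a legitimate instance, but one checks that every triangulation of an $n$-cycle contains a chord whose endpoints are at distance at least $n/3>g-1$ in $C_n$, so the corresponding triangle lies outside $\mathcal{L}_g$ and the all $g$-good property provides no vertex to map its apex to. The cited proofs avoid this by running the induction on the two-terminal series-parallel decomposition and treating long connections with a separate lemma (a sufficiently long path of prescribed sign and parity maps onto any admissible image pair); this extra idea is missing from your argument. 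For necessity, the justification ``each gluing is a $2$-sum of series-parallel graphs, so $H$ remains $K_4$-minor-free'' is incorrect: a $2$-sum is performed along an edge present in both parts, whereas you glue along arbitrary non-adjacent vertex pairs, and this can create a $K_4$-minor. Already in the first gadget $T_g(p_0,q_0,r_0)$, gluing a fresh gadget at the pair formed by $x$ and an internal vertex $w$ of the cycle joining $y$ and $z$ produces a $K_4$-subdivision with branch vertices $x,y,z,w$ (the six connecting paths are supplied by the arcs of the three original cycles and by the new gadget). The construction must instead be organized as a $2$-tree on the designated vertices, each new apex being attached only to a pair that is already an edge of a designated triangle; one must then also prove, rather than merely assert, that the resulting signed graph is $g$-wide, since the dangerous negative cycles are those traversing several gadgets. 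With these repairs your outline matches the arguments of \cite{BFN17,BFN19}, but as stated these two steps are genuine gaps rather than omitted routine verifications.
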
 

Then we have the following observation.

\begin{observation}
	\label{obs:all g good property- x and y in a nagative g-cyce}
	Let $(B,\pi)$ be a $g$-wide signed graph, $(B',\omega)$ be a $\gl$-partial $(B,\pi)$-distance graph with a nonempty set $\mathcal{T}$ having all $g$-good property. Then for every edge $xy$ in $T \in \mathcal{T}$, $x$ and $y$ are in a negative cycle of length $g$ in $(B,\pi)$.
\end{observation}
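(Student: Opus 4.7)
The plan is to build, for an arbitrary edge $xy$ of some triangle in $\mathcal{T}$, a negative closed walk of length exactly $g$ in $(B,\pi)$ passing through both $x$ and $y$, and then invoke $g$-wideness to promote this walk to a cycle.

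Write $p := \omega(xy)$; since $(B',\omega)$ is a $\gl$-partial $(B,\pi)$-distance graph, $1 \leq |p| \leq \gl$. First I would select integers $q, r$ with $|q|, |r| \leq \gl$, $|p|+|q|+|r|=g$, and $pqr<0$. Such a choice exists because $\gc \leq g-|p| \leq 2\gl$, which allows a split $|q|+|r|=g-|p|$ with each part in $[1,\gl]$; the sign of one of $q$ or $r$ can then be flipped to ensure $pqr<0$. Proposition~\ref{g-good triples}\ref{g-good-product-negative} certifies that $(p,q,r)\in\mathcal{L}_g$: the parity condition $|p|+|q|+|r|\equiv g\pmod 2$ is immediate since the sum equals $g$, and the bound $|p|+|q|+|r|\leq g+\min\{2|p|,2|q|,2|r|\}$ reduces to $0\leq \min\{2|p|,2|q|,2|r|\}$.

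The all $g$-good property applied to $(p,q,r)$ now produces a triangle $xyz\in\mathcal{T}$ with either $(\omega(zx),\omega(zy))=(q,r)$ or $(\omega(zx),\omega(zy))=(-q,-r)$; in either case $\omega(xy)\cdot\omega(yz)\cdot\omega(zx)=pqr<0$. Since $(B',\omega)$ is a partial $(B,\pi)$-distance graph, each of these weights equals an algebraic distance in $(B,\pi)$, so Definition~\ref{def-eq-algebraic distance} yields an $(x,y)$-path $P_1$, a $(y,z)$-path $P_2$ and a $(z,x)$-path $P_3$ in $(B,\pi)$ whose lengths are $|p|,|\omega(yz)|,|\omega(zx)|$ (summing to $g$) and whose signs match the signs of the corresponding weights. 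Their concatenation $W:=P_1P_2P_3$ is then a closed walk at $x$ of total length $g$ and total sign $pqr<0$, traversing $y$ along the way.

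Finally, because $(B,\pi)$ is $g$-wide, its negative walk-girth equals $g$, which forces $W$ to be a cycle: otherwise a repeated internal vertex would split $W$ into two shorter closed walks whose sign product is $\sigma(W)<0$, yielding a negative closed walk, and hence a negative cycle, of length strictly less than $g$, contradicting $g$-wideness. The resulting negative cycle of length $g$ contains both $x$ and $y$, as required. The main technical point is the choice of $(q,r)$ guaranteeing both $(p,q,r)\in\mathcal{L}_g$ and $|q|,|r|\leq\gl$ so that the all $g$-good property applies; the rest is a direct combination of Proposition~\ref{g-good triples}, the definition of partial distance graph, and the minimality of negative closed walks enforced by $g$-wideness.
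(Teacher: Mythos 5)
Your proposal is correct and follows essentially the same route as the paper: choose a $g$-wide triple $(p,q,r)$ with $|p|+|q|+|r|=g$ and $pqr<0$ (the paper takes the explicit choice $|q|=\lfloor (g-|p|)/2\rfloor$, $|r|=\lceil (g-|p|)/2\rceil$), invoke the all $g$-good property to obtain the third vertex $z$, and realize the three algebraic distances as paths forming a negative closed walk of length $g$, which $g$-wideness forces to be a cycle. Your write-up merely makes explicit the existence argument for $(q,r)$ and the final walk-to-cycle step that the paper leaves implicit.
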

\begin{proof}
	Let $\omega(x,y)=p$ with $|p| \leq \lfloor \frac{g}{2}\rfloor$. 
	By Proposition~\ref{g-good triples}(2), if $p>0$, then $(p,-\lfloor\frac{g-p}{2}\rfloor, \lceil \frac{g-p}{2}\rceil)  \in \mathcal{L}_g$, and if $p<0$, then $(p,\lfloor \frac{g+p}{2}\rfloor, \lceil \frac{g+p}{2}\rceil) \in \mathcal{L}_g$ . Thus in each case, there exists a vertex $z$ in $B$ such that  
	$$|\omega(xy)|+|\omega(zx)|+|\omega(zy)|=|p|+\left\lfloor \frac{g-|p|}{2}\right\rfloor+\left\lceil \frac{g-|p|}{2}\right\rceil = g,$$ and  $\omega(xy)\cdot \omega(zx)\cdot \omega(zy) <0$, which implies that there is a negative cycle of length $g$ in $(B,\pi)$ containing both $x$ and $y$.  
\end{proof}
 
 Then Theorem~\ref{thm: old bounds iff} and Observation \ref{obs:all g good property- x and y in a nagative g-cyce} can be restated with only positive weights as follows, respectively.

\begin{theorem}
	\label{thm:bounds if and only if}
	A $g$-wide signed graph $(B,\pi)$ is $\SP$-complete if and only if there exists a $(g-1)$-partial  girth-transformed  $(B,\pi)$-distance graph $(B', \omega)$ which has a nonempty $g$-closed set $\mathcal{T}$ of triangles. 
\end{theorem}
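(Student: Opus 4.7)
The plan is to derive Theorem~\ref{thm:bounds if and only if} directly from Theorem~\ref{thm: old bounds iff} by exhibiting a bijective correspondence between $\gl$-partial $(B,\pi)$-distance graphs carrying a nonempty all $g$-good triangle set and $(g-1)$-partial girth-transformed $(B,\pi)$-distance graphs carrying a nonempty $g$-closed triangle set, the bridge being the map $f_g$ of Definition~\ref{def-girth transformed}. Since Theorem~\ref{thm: old bounds iff} already characterizes $\SP$-completeness in the first language, matching the two formulations edge-by-edge and triangle-by-triangle will yield the claimed equivalence in the new, purely-positive-weight language.

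The first step is to check that $f_g$ is a bijection from the admissible algebraic-distance range $D := \{-\gc+1, \ldots, -1, 1, \ldots, \gl\}$ onto $\{1, 2, \ldots, g-1\}$; this is immediate from inspection of the two defining cases. On every graph $B'$ with $V(B')=V(B)$, this yields a pointwise bijection $w \leftrightarrow \omega$ with $\omega(xy)=f_g(w(xy))$ between $\gl$-partial $(B,\pi)$-distance graphs and $(g-1)$-partial girth-transformed $(B,\pi)$-distance graphs, provided both are defined on the same edge set. Observation~\ref{obs:all g good property- x and y in a nagative g-cyce} guarantees that whenever the triangle set $\mathcal{T}$ satisfies either the all $g$-good or the $g$-closed property, the endpoints of every edge carrying a triangle weight lie in a common negative $g$-cycle of $(B,\pi)$, so the girth-transformed distance is well-defined exactly where needed.

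The core verification is that the combinatorial condition defining the two properties are identified under $f_g$. The key identity is $f_g(-t)=g-f_g(t)$ for every $t\in D$ with $-t\in D$, which follows by splitting on the sign of $t$ using the piecewise definition of $f_g$. Fixing an edge $xy$ with weight $p$ and a triple $(p,q,r)\in\mathcal{L}_g$, the all $g$-good property asks for some $z$ with $(w(xz),w(yz))\in\{(q,r),(-q,-r)\}$; applying $f_g$ to both coordinates converts this, via the key identity, into $(\omega(xz),\omega(yz))\in\{(f_g(q),f_g(r)),(g-f_g(q),g-f_g(r))\}$, which is precisely the $g$-closed requirement, after one re-parameterizes the triple to the canonical representative with positive entries using the 8-fold redundancy noted in the paper. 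Iterating over all triples in $\mathcal{L}_g$ and all edges $xy\in\mathcal{E}$ shows the two properties coincide on the same underlying $\mathcal{T}$, so Theorem~\ref{thm: old bounds iff} transfers verbatim.

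The step requiring the most care is matching the two conditions in both directions, particularly reconciling the eight equivalent representations of each element of $\mathcal{L}_g$ under sign changes and under the substitution $p \mapsto p'$ with $pp'<0$, $|p'|=g-|p|$; combined with the even-$g$ coincidence $\gl=\gc=g/2$, which makes the two branches of the $g$-closed condition collapse when a weight equals $g/2$, this is where a few routine sub-cases must be handled, but no genuine obstacle is expected. Once this bookkeeping is done, the theorem follows immediately.
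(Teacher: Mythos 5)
Your proposal is correct and follows exactly the route the paper intends: the paper offers no written proof of Theorem~\ref{thm:bounds if and only if}, presenting it as an immediate restatement of Theorem~\ref{thm: old bounds iff} obtained by applying the girth transformation $f_g$ of Definition~\ref{def-girth transformed}, and your argument (bijectivity of $f_g$ onto $\{1,\ldots,g-1\}$, the identity $f_g(-t)=g-f_g(t)$, and the resulting edge-by-edge and triangle-by-triangle matching of the all $g$-good and $g$-closed conditions) is precisely the verification that this restatement is faithful. Your bookkeeping of the $8$-fold redundancy in $\mathcal{L}_g$ and the boundary case of weight $\lfloor g/2\rfloor$ for even $g$ supplies details the paper leaves implicit, but introduces no new idea.
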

 
\begin{observation}
	\label{obs:g-closed - x and y in a nagative g-cyce}
	Let $(B,\pi)$ be a $g$-wide signed graph, $(B',\omega)$ be a  $(g-1)$-partial  girth-transformed  $(B,\pi)$-distance graph which has a nonempty $g$-closed set $\mathcal{T}$ of triangles. Then for every edge $xy$ in $\mathcal{T}$, $x$ and $y$ are in a negative cycle of length $g$ in $(B,\pi)$.
\end{observation}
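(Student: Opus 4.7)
The plan is to adapt the proof of Observation~\ref{obs:all g good property- x and y in a nagative g-cyce} to the positive-weight setting imposed by Definition~\ref{def-girth transformed}. Fix an edge $xy$ appearing in some triangle of $\mathcal{T}$ and write $p=\omega(xy)\in\{1,\dots,g-1\}$. Since $p=f_g(ad_{(B,\pi)}(x,y))$, the signed algebraic distance $\widetilde{p}:=ad_{(B,\pi)}(x,y)$ satisfies $|\widetilde{p}|\le\gl$, with $\widetilde{p}=p$ when $p\le\gl$ and $\widetilde{p}=p-g$ otherwise. So $\widetilde{p}$ records the sign and length of a shortest $(x,y)$-path in $(B,\pi)$, and translating the $g$-closed condition back into signed weights is just composing with $f_g^{-1}$.

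The next step is to exhibit a triple in $\mathcal{L}_g$ that, when fed into the $g$-closed condition, forces a negative $g$-cycle through $xy$. By Proposition~\ref{g-good triples}\ref{g-good-product-negative}, taking $|q|=\lfloor(g-|\widetilde{p}|)/2\rfloor$, $|r|=\lceil(g-|\widetilde{p}|)/2\rceil$ and choosing the signs of $q,r$ so that $\widetilde{p}qr<0$ gives an element $(\widetilde{p},q,r)\in\mathcal{L}_g$: the sum $|\widetilde{p}|+|q|+|r|=g$ automatically satisfies both the parity and inequality constraints of that proposition, and all three entries lie in $\{\pm 1,\dots,\pm(g-1)\}$. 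Invoking the $g$-closed property of $\mathcal{T}$ at the edge $xy$ with (the girth-transformed image of) this triple produces a vertex $z\in V(B)$ and a triangle $xyz\in\mathcal{T}$ whose remaining weights match $f_g(q),f_g(r)$, or else the switching-at-$z$ alternative $g-f_g(q),g-f_g(r)$ of Definition~\ref{def:triangle g closed}.

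To finish, I would translate this triangle back to $(B,\pi)$: the three weights correspond to shortest paths $P_{xy},P_{xz},P_{yz}$ of total length exactly $g$ whose signs multiply to $-$. Their concatenation $W$ is then a closed walk of length $g$ and negative sign through $x$ and $y$. Because $(B,\pi)$ is $g$-wide, any splitting of $W$ at a repeated vertex would produce two nonempty closed walks of lengths strictly less than $g$ whose signs multiply to $-$, which forces at least one of them to be a negative closed walk shorter than $g$: a contradiction. Hence $W$ is a simple cycle, i.e.\ a negative $g$-cycle through $x$ and $y$, as required. The only real obstacle is the bookkeeping needed to reconcile the signed arithmetic of $\mathcal{L}_g$ with the positive-weight language of the $g$-closed definition and its switching-at-$z$ clause; but that clause leaves the product $\widetilde{p}qr$ and the absolute sum $|\widetilde{p}|+|q|+|r|$ invariant, so the same negative $g$-cycle is produced in both cases.
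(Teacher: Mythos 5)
Your proposal is correct and follows essentially the same route as the paper: the paper presents this observation as the positive-weight restatement (via $f_g$) of Observation~\ref{obs:all g good property- x and y in a nagative g-cyce}, whose proof likewise picks the triple with $|q|=\lfloor(g-|p|)/2\rfloor$, $|r|=\lceil(g-|p|)/2\rceil$ and negative product from Proposition~\ref{g-good triples}\ref{g-good-product-negative} and concludes a negative $g$-cycle from the resulting length-$g$ negative closed walk. Your extra care in checking that the switching-at-$z$ alternative preserves the relevant quantities and that $g$-wideness forces the closed walk to be a cycle is consistent with, and slightly more explicit than, the paper's argument.
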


Now, we are ready to state and prove our main theorem.
 
\begin{theorem}
	For any positive integer $g$, if a $g$-wide signed graph $(B, \pi)$ is $\mathcal{SP}$-complete, then so is ${\rm EDC}(B, \pi)$.
\end{theorem}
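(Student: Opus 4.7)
The plan is to apply the characterization of $\SP$-completeness given by Theorem~\ref{thm:bounds if and only if} in both directions. Starting from the assumption that the $g$-wide signed graph $(B,\pi)$ is $\SP$-complete, that theorem delivers a $(g-1)$-partial girth-transformed $(B,\pi)$-distance graph $(B',\omega)$ admitting a nonempty $g$-closed set of triangles $\mathcal{T}$. Since Lemma~\ref{lem:girthEDC} gives that ${\rm EDC}(B,\pi)$ is $(g+1)$-wide, to prove its $\SP$-completeness it is enough, again by Theorem~\ref{thm:bounds if and only if}, to exhibit a $g$-partial girth-transformed ${\rm EDC}(B,\pi)$-distance graph $(B'',\omega'')$ admitting a nonempty $(g+1)$-closed set of triangles $\mathcal{T}'$.

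The construction of $(B'',\omega'')$ is the natural double-cover lift. Take $V(B'')=V^+\cup V^-$. Observation~\ref{obs:g-closed - x and y in a nagative g-cyce} guarantees that every edge $uv$ of $B'$ corresponds to a pair of vertices on a common negative $g$-cycle of $(B,\pi)$, so Proposition~\ref{pro:ad in EDC} applies: for each such edge $uv$, include the four edges $u^+v^+$, $u^-v^-$, $u^+v^-$, $u^-v^+$ with girth-transformed weights (for girth $g+1$) dictated by that proposition. A direct computation shows that $u^\alpha v^\alpha$ receives weight $\omega(uv)$ while $u^\alpha v^{-\alpha}$ receives weight $g-\omega(uv)$, uniformly over the sign of $ad_{(B,\pi)}(u,v)$ and the special case $d_G(u,v)=\lfloor g/2\rfloor$. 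In addition, for every vertex $v$ appearing in some triangle of $\mathcal{T}$, include the edge $v^+v^-$ with girth-transformed weight $g$ (its algebraic distance in ${\rm EDC}(B,\pi)$ being $-1$). Finally let $\mathcal{T}'$ consist of: \emph{(a)} for each triangle $xyz\in\mathcal{T}$, all eight lifts $x^\alpha y^\beta z^\gamma$ with $\alpha,\beta,\gamma\in\{+,-\}$; and \emph{(b)} for each edge $xy$ appearing in some triangle of $\mathcal{T}$ and each sign $\alpha\in\{+,-\}$, the triangle on $\{x^+,x^-,y^\alpha\}$.

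The remaining task is to verify $(g+1)$-closedness of $\mathcal{T}'$. For each edge $e$ of $\mathcal{T}'$ of girth-transformed weight $p$ and each triple $(p,q,r)\in\mathcal{L}_{g+1}$, a completing triangle in $\mathcal{T}'$ must be exhibited. The plan is to translate the given $(g+1)$-wide triple into a $g$-wide triple attached to the underlying edge of $(B',\omega)$, invoke the $g$-closedness of $\mathcal{T}$ to obtain a completing triangle $xyz\in\mathcal{T}$, and then select the lift of $xyz$ in family \emph{(a)} whose sign pattern produces the required weights $q$ and $r$. By the weight computation above, the four essentially distinct lifts of a fixed triangle of $\mathcal{T}$ with weights $(p_{xy},p_{yz},p_{xz})$ produce exactly the four patterns $(p_{xy},p_{yz},p_{xz})$, $(p_{xy},g-p_{yz},g-p_{xz})$, $(g-p_{xy},g-p_{yz},p_{xz})$ and $(g-p_{xy},p_{yz},g-p_{xz})$, which is enough to realize each relevant sign choice. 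Family \emph{(b)} is used precisely for those $(g+1)$-wide triples containing an edge of weight $g$: by Proposition~\ref{g-good triples}, such a triple is forced into the shape $(g,p,g-p)$ up to the symmetries of $\mathcal{L}_{g+1}$, matching the weight pattern of the triangles in family \emph{(b)}.

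The main obstacle is the bookkeeping of this case analysis. The triples of $\mathcal{L}_{g+1}$ split, by Proposition~\ref{g-good triples}, into the $pqr>0$ case (whose parity condition $|p|+|q|+|r|\equiv 0\pmod 2$ already coincides with that of $\mathcal{L}_g$) and the $pqr<0$ case (whose parity condition shifts by one from that of $\mathcal{L}_g$, since $g+1\not\equiv g\pmod 2$). The parity shift is absorbed exactly by the $+1$ length offsets produced by Proposition~\ref{pro:ad in EDC} whenever a lift crosses between $V^+$ and $V^-$ or uses a $v^+v^-$ edge, so each $(g+1)$-wide triple indeed corresponds under the lift to a $g$-wide triple in $\mathcal{L}_g$ on a suitable edge of $(B',\omega)$. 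The boundary case $d_G(u,v)=\lfloor g/2\rfloor$ of Proposition~\ref{pro:ad in EDC} must be tracked carefully, since the cross-edges there pick up a positive algebraic distance $\lceil g/2\rceil$ rather than the generic negative value $-\omega(uv)-1$; fortunately both possibilities yield the same girth-transformed weight $g-\omega(uv)$, so the case analysis closes uniformly.
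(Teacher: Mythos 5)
Your proposal follows essentially the same route as the paper: the same appeal to Theorem~\ref{thm:bounds if and only if} in both directions, the identical lifted weighted graph (edges $v^+v^-$ of weight $g$, and $u^\alpha v^\alpha$, $u^\alpha v^{-\alpha}$ of weights $\omega(uv)$ and $g-\omega(uv)$, justified via Proposition~\ref{pro:ad in EDC} and Observation~\ref{obs:g-closed - x and y in a nagative g-cyce}), and the same strategy of deducing $(g+1)$-closedness of the lifted triangle family from $g$-closedness of $\mathcal{T}$ together with the isomorphisms $T_g(p,q,r)\cong T_g(g-p,g-q,r)$. The only difference is that you leave the case analysis (edges $x^+x^-$ of weight $g$, whether $\max\{q,r\}=g$, and the split on $q+r$ versus $g$) as a sketch, whereas this bookkeeping constitutes the bulk of the paper's proof; your outline of how the cases resolve is consistent with what the paper carries out.
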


 \begin{proof}
 	Let $(B, \pi)$ be a $g$-wide signed graph which is $\mathcal{SP}$-complete.
 	By Theorem~\ref{thm:bounds if and only if}, there exists a $g$-partial $(B,\pi)$-distance graph $(B', w)$ which has a nonempty and $g$-closed set $\mathcal{T}$ of triangles whose vertex set is $V$ and (weighted) edges form the set $\mathcal{E}$. We first define the weighted signed graph  $(\widehat{B},\widehat{w})$ on vertex set $V^+\cup V^-$, where $V^+ := \{v^+: v \in V\}$ and $V^- := \{v^-: v \in V\}$ as follows: for each $v \in V$, vertices $v^+$ and $v^-$ are joined by an edge with $\widehat{w}(v^+v^-)=g$. If $uv \in \mathcal{E}$ with weight $w(uv)$, then we add four edges $\{u^+v^+,u^+v^-,u^-v^-,u^-v^+\}$ with $\widehat{w}(u^+v^+)=\widehat{w}(u^-v^-)=w(uv)$, and $\widehat{w}(u^+v^-)=\widehat{w}(u^-v^+)=g-w(uv)$.
 	
 	\begin{claim}
 		  $(\widehat{B},\widehat{w})$ is a $g$-partial girth-transformed ${\rm EDC}(B,\pi)$-distance graph.
 	\end{claim}
 	\begin{proof}
 		Let $x,y$ be two vertices of $B'$ forming an edge in $\mathcal{E}$, by Observation \ref{obs:g-closed - x and y in a nagative g-cyce}, $x,y$ are contained in a negative cycle of length $g$ in $(B, \pi)$.
 		By Lemma~\ref{lemma:closed p+i-walk in EDC}, $x^{\alpha}$ and $y^{\beta}$ are in a negative cycle of length $g+1$ in  ${\rm EDC}(B,\pi)$ for any $\alpha,\beta \in \{+,-\}$. Thus, it suffices to show that $\widehat{\omega}(x^\alpha y^\beta)=f_{g+1}(ad_{{\rm EDC}(B,\pi)}(x^\alpha, y^\beta))$.

 		For each vertex $x$ in $B$,  $ad_{{\rm EDC}(B,\pi)}(x^+,x^-)=-1<0$, so   
 		$f_{g+1}(ad_{{\rm EDC}(B,\pi)}(x^+,x^-))=g+1-1=g=\widehat{\omega}(x^+x^-)$.
 		
 		Assume $x$ and $y$ are two vertices in $B$
 		with $ad_{(B,\pi)}(x,y)=p>0$. Thus $\omega(xy)=p$. By Proposition~\ref{pro:ad in EDC},  $ad_{{\rm EDC}(B,\pi)}(x^+,y^+)=ad_{{\rm EDC}(B,\pi)}(x^-,y^-)=p$.  Then we have $f_{g+1}(ad_{{\rm EDC}(B,\pi)}(x^+,y^+))=f_{g+1}(ad_{{\rm EDC}(B,\pi)}(x^-,y^-))=p=\omega(xy)=\widehat{\omega}(x^+y^+)=\widehat{\omega}(x^-y^-)$. 
 		If $d_{B}(x,y)=\lfloor\frac{g}{2}\rfloor$, then $\omega(xy)=p=\lfloor\frac{g}{2}\rfloor$.
 		By Proposition~\ref{pro:ad in EDC}, $ad_{{\rm EDC}(B,\pi)}(x^+,y^-)=ad_{{\rm EDC}(B,\pi)}(x^-,y^+)=\lceil\frac{g}{2}\rceil$, so we have
 		$$f_{g+1}(ad_{{\rm EDC}(B,\pi)}(x^+,y^-))=f_{g+1}(ad_{{\rm EDC}(B,\pi)}(x^-,y^+))=\lceil\frac{g}{2}\rceil=g-\omega(xy)= \widehat{\omega}(x^+y^-)=\widehat{\omega}(x^-y^+).$$
 		Otherwise, by Proposition~\ref{pro:ad in EDC}, $ad_{{\rm EDC}(B,\pi)}(x^+,y^-)=ad_{{\rm EDC}(B,\pi)}(x^-,y^+)=-p-1<0$, so we have   
 			\begin{equation*}
 			\begin{array}{ll}
 				f_{g+1}(ad_{{\rm EDC}(B,\pi)}(x^+,y^-)) &= f_{g+1}(ad_{{\rm EDC}(B,\pi)}(x^-,y^+)) \\
 				&= g+1+(-p-1)=g-p \\
 				&= g-\omega(xy)= \widehat{\omega}(x^+y^-)=\widehat{\omega}(x^-y^+).
 			\end{array} 
 		\end{equation*}
 		
 		The case $ad_{(B,\pi)}(x,y)=p>0$ could be verified similarly, we do not repeat again.
%
%
 	\end{proof}
 	
 	Let $\mathcal{T}'$ be the family of all the triangles in $\widehat{B}$ and $\mathcal{E}'$ consisting of the edges of the triangles in $\mathcal{T}'$, we shall show that $\mathcal{T}'$ is $(g+1)$-closed and nonempty. 
 	
 	Let $p,q,r$ be positive integers such that $(p,q,r) \in \mathcal{L}_{g+1}$ and $e=x^{\alpha}y^{\beta} \in \mathcal{E}'$ with $\widehat{\omega}(e)=p$, where $x,y \in V(B)$, $\alpha,\beta \in \{+,-\}$ (it is possible that $x=y$, in which case, $\alpha \neq \beta$). Following Theorem~\ref{thm:bounds if and only if} and Definition~\ref{def:triangle g closed}, we shall prove that there is a triangle in $\mathcal{T}'$ on $e$. That is equivalent to finding a vertex $z^{\gamma}$, where $z \in V(B)$, $\gamma \in \{+,-\}$, such that either
 	\begin{equation}
 		\label{EDC-traget-1}
 		\tag{C1-1}
 		\widehat{\omega}(z^{\gamma}x^{\alpha})=q,  \ \ \widehat{\omega}(z^{\gamma}y^{\beta})=r,
 	\end{equation} 
 	or 
 	\begin{equation}
 		\label{EDC-traget-2}
 		\tag{C1-2}
 		\widehat{\omega}(z^{\gamma}x^{\alpha})=g+1-q, \ \  \widehat{\omega}(z^{\gamma}y^{\beta})=g+1-r.
 	\end{equation} 
 
 	Note that the statement clearly holds for the case $g=1$, indeed, in this case,  $(B,\pi)$ is a negative loop. Then ${\rm EDC}(B,\pi)$ is a digon, which is obviously $\SP$-complete. For the case that $g=2$,  $(B,\pi)$ is a digon. In the complete ${\rm EDC}(B,\pi)$-distance graph (containing weighted edge $uv$ for any $u,v \in V(B)$), there are four triangles, each of which is a weighted triangle isomorphic to $\bigtriangleup(1,1,2)$, and it is trivial to check that $\mathcal{T'}$ consisting of these four triangles is $g$-closed. 
 	
	Now, consider that $g\geq 3$ and let $e$ be an edge with $\widehat{\omega}(e)=p$. We first assume that $e= x^+x^-$: it follows that $p=g=\max\{|p|,|q|,|r|\}$. By Proposition~\ref{g-good triples}\ref{g-good-product-positive}, $2g \leq g+q+r \leq 2g+2$ and $g+q+r$ is even, we have $q+r \in \{g, g+2\}$.  

	If $q+r=g+2$, then we have $\min\{q,r\} \geq 2$, so $g+1-q, g+1-r \leq g-1$. Moreover, $p+g+1-q+g+1-r=p+g=2g$ is even, thus by Proposition~\ref{g-good triples}\ref{g-good-product-positive}, $(p,g+1-q,g+1-r) \in \mathcal{L}_g$.   By Theorem~ \ref{thm:bounds if and only if}, there is a triangle $xyz \in \mathcal{T}$ such that either $\omega(zx)=q-1$ or $\omega(zx)=g-(q-1)=g+1-q$. Therefore, if $\omega(zx)=q-1$, then $\widehat{\omega}(z^-x^+)=g+1-q$, and $\widehat{\omega}(z^-x^-)=q-1=g+1-r$, so \eqref{EDC-traget-2} holds with $z^\gamma=z^-$. If $\omega(zx)=g+1-q$, then $\widehat{\omega}(z^+x^+)=g+1-q$, and $\widehat{\omega}(z^+x^-)=q-1=g+1-r$, so \eqref{EDC-traget-2} holds with $z^\gamma=z^+$.
 
  	If $q+r = g$, then $(p,q,r) = (g,q,g-q) \in \mathcal{L}_g$ (as $g+q+g-q=2g$ and by Proposition~\ref{g-good triples}\ref{g-good-product-positive}). So there is a triangle $xyz \in \mathcal{T}$ such that either $\omega(zx)=q$ or $\omega(zx)=g-q$. If $\omega(zx)=q$, then $\widehat{\omega}(z^+x^+)=q$, and $\widehat{\omega}(z^+x^-)=g-q=r$, thus \eqref{EDC-traget-1} holds with $z^\gamma=z^+$. If $\omega(zx)=g-q$, then $\widehat{\omega}(z^-x^+)=q$, and $\widehat{\omega}(z^-x^-)=g-q=r$, thus \eqref{EDC-traget-1} holds with $z^\gamma=z^-$.

\medskip
  
	Now, without loss of generality, we assume that $e=x^+y^{\beta}$ and $x\neq y$. It follows that $\omega(xy)=p<g$ if $\beta=+$ and $\omega(xy)=g-p$ if $\beta=-$. We consider the following two cases.

\medskip
\noindent
{\bfseries Case 1.} \emph{$\max\{q,r\} = g$.} 
In this case, without loss of generality, assume that $q=g$. As by Proposition~\ref{g-good triples}\ref{g-good-product-positive} $2g\leq p+g+r \leq 2g+2$ and $p+g+r$ is even, we have $p+r \in \{g, g+2\}$.

If $p+r=g$, then $(p,q,r)=(p,g,g-p)$. Note that $\widehat{\omega}(x^-x^+)=q=g$, and $\widehat{\omega}(x^-y^{\beta})=g-p=r$, by setting $z^{\gamma}=x^-$, \eqref{EDC-traget-1} holds. 

Next, assume that $p+r=g+2$, i.e., $r=g+2-p$. Then, $\min\{p,r\} \geq 2$, and by Proposition~\ref{g-good triples}\ref{g-good-product-positive} we have that   $(p,1,p-1),(g-p,g-1,p-1) \in \mathcal{L}_g$.

\begin{itemize}
	\item If $\beta=+$, then $\omega(xy)=p$. As $(p,1,p-1)\in \mathcal{L}_g$,  there is a triangle $xyz \in \mathcal{T}$ such that either $\omega(zx)=1$ and $\omega(zy)=p-1$, or $\omega(zx)=g-1$ and $\omega(zy)=g-p+1$. For the former case, $\widehat{\omega}(z^+x^+)=1$, and $\widehat{\omega}(z^+y^+)=p-1$; for the latter case, $\widehat{\omega}(z^-x^+)=1$, $\widehat{\omega}(z^-y^+)=p-1$.
	\item If $\beta=-$, then $\omega(xy)=g-p$. As $(g-p,g-1,p-1)\in \mathcal{L}_g$, there exists a triangle $xyz \in \mathcal{T}$ such that either $\omega(zx)=g-1$ and $\omega(zy)=p-1$, or $\omega(zx)=1$ and $\omega(zy)=g+1-p$. For the former case, $\widehat{\omega}(z^-x^+)=1$, and $\widehat{\omega}(z^-y^-)=p-1$. For the latter case, $\widehat{\omega}(z^+x^+)=1$, $\widehat{\omega}(z^+y^-)=p-1$. 
\end{itemize}

For each case above, since $g+1-q=1$ and $g+1-r=p-1$, \eqref{EDC-traget-2} holds.

\medskip
\noindent
{\bfseries Case 2.} \emph{$\max\{q,r\} <g$.} First, assume that $q+r \leq g+1$. Recall that $p < g$  (because the only edges of weight $g$ in $(\widehat{B^*},\widehat{\omega})$ are those of the form $x^+x^-$), so we have $p+q+r \leq 2g$. 
Thus we have $(p,q,r),(-p,-q,r) \in \mathcal{L}_g$. As $T_g(p,q,r)$ is equivalent to $T_g(-p,-q,r)$, $T_g(-p,-q,r)$ is equivalent to $T_g(g-p,g-q,r)$, so $(g-p,g-q,r) \in \mathcal{L}_g$. 

\begin{itemize}
	\item If $\beta=+$, then $\omega(xy)=p$. As $(p,q,r) \in \mathcal{L}_g$, there exists a triangle $xyz \in \mathcal{T}$ such that either $\omega(zx)=q$ and $\omega(zy)=r$, or $\omega(zx)=g-q$ and $\omega(zy)=g-r$. For the former case, $\widehat{\omega}(z^+x^+)=q$, $\widehat{\omega}(z^+y^+)=r$; for the latter case, $\widehat{\omega}(z^-x^+)=q$, $\widehat{\omega}(z^-y^+)=r$. 
	
	\item If $\beta=-$, then $\omega(xy)=g-p$. As $(g-p,g-q,r) \in \mathcal{L}_g$, there exists a triangle $xyz \in \mathcal{T}$ such that either $\omega(zx)=g-q$ and $\omega(zy)=r$, or $\omega(zx)=q$ and $\omega(zy)=g-r$. For the former case, we have that $\widehat{\omega}(z^-x^+)=q$, $\widehat{\omega}(z^-y^-)=r$. For the latter case, we have that $\widehat{\omega}(z^+x^+)=q$, $\widehat{\omega}(z^+y^-)=r$.  
\end{itemize}
For each case above, \eqref{EDC-traget-2} holds.

Now, assume that $q+r \geq g+2$, and let $q'=g+1-q$ and $r'=g+1-r$. Thus, we have that $q'+r' \leq g < g+1$ and $\max\{q',r'\} < g$. Then, by the previous part, that is, when $q+r \leq g+1$, we know that there exists a vertex such that \eqref{EDC-traget-2} holds. More precisely, there exists a vertex $z^\gamma$ such that $\widehat{\omega}(z^{\gamma}x^{\alpha})=g+1-q'=q$, and $\widehat{\omega}(z^{\gamma}y^{\beta})=g+1-r'=r$, that is, \eqref{EDC-traget-1} holds.

This completes the proof.
\end{proof}
 
\section{A class of smaller $g$-wide $\SP$-complete signed graphs}\label{sec-newbound}

In this section, we describe a new family of $\SP$-complete signed graphs from $\mathcal G_{10}\cup \mathcal G_{11}$. For each $g\geq 2$, we will construct a $g$-wide $\SP$-complete signed graph of order $\lfloor g^2/2\rfloor$. These graphs are smaller than the previously known examples: for odd values of $g$, bounds of order $(g-1)^2$ were constructed ~\cite{BFN17}; for even values of $g$, the only previously known examples were the signed projective cubes, of order $2^{g-1}$~\cite{BFN19}.

For any pair of integers $(a,b)$, 
let ${\rm C}(2a,b)$ denote the Cartesian product $C_{2a}\square P_{b}$, viewed as a cylinder. The graph ${\rm C}(2a,b)$ is of diameter $a+b-1$, and for any vertex $v$ with degree~$3$ in ${\rm C}(2a,b)$, there is a unique vertex at distance $a+b-1$  of $v$ which is therefore called {\em antipodal} of $v$, denoted $\bar{v}$. For any pair of integers $(a,b)$, the {\em Augmented Cylindrical grid}  of dimensions $2a$ and $b$, denoted by ${\rm AC}(2a,b)$, is obtained from ${\rm C}(2a,b)$ by adding an edge between each pairs of  
antipodal vertices, we denote by $J$ the set of edges between antipodal pairs  (see Figure~\ref{AC(8,4)} for an example). More specifically, let ${\rm AC}(2a,b)$ be the graph defined on vertex set $\{0,1,\ldots,2a-1\}\times\{0,1,\ldots,b-1\}$ such that a pair $\{(i_1,j_1),(i_2,j_2)\}$ is an edge if 
\begin{itemize}
	\item $i_1 = i_2$ and $|j_1-j_2| = 1$ (vertical edges), or
	\item $j_1 = j_2$ and $|i_1 - i_2| \in \{1,2a-1\}$ (horizontal edges), or
	\item $|i_2-i_1| + |j_2-j_1| = a+b-1$.  ($J$)
\end{itemize}
\begin{figure}[ht!]
	\centering
	\begin{tikzpicture}	
	\draw[step=0.5,black,thick] (0,0) grid (3.5,1.5);
	\foreach \i in {0,0.5,...,3.5}{
	\foreach \j in {0,0.5,1} {
		\filldraw(\i,\j)circle(0.05);
		\draw[dashed] (0,\j)..controls (0.5,\j+0.3) and  (3,\j+0.3)..(3.5,\j);
	}
	\filldraw(\i,1.5)circle(0.05);
	\draw (0,1.5)..controls (0.5,1.8) and  (3,1.8)..(3.5,1.5);
	}
	\foreach \i in {0,0.5,...,1.5}{
		\draw [red,line width =0.8pt] (\i,0)--(\i+2,1.5);
		\draw [red,line width =0.8pt] (\i,1.5)--(\i+2,0);}
	\end{tikzpicture}
	\caption{The augmented Cylindrical grid $AC(8,4)$. Sloped (red) edges are the edges in $J$.}
	\label{AC(8,4)}
\end{figure}
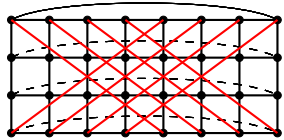

  We note that  the signed graph $({\rm AC}(2a,b),J)$ could be $\mathcal{SP}_g$-complete for various choices of $a$ and $b$ (for example, $P_5 \square P_6$ is $\mathcal{SP}_8$-complete). Here, we will prove this for $a=\lfloor\frac{g}{2}\rfloor$, $b=\lceil\frac{g}{2}\rceil$, which results in a family of very symmetric signed graphs.
 
For convenience and readability, we let   $\T(g)= {\rm C}(2\lfloor\frac{g}{2}\rfloor,\lceil\frac{g}{2}\rceil)$. Also we let $\TT(g)= {\rm AC}(2\lfloor\frac{g}{2}\rfloor,\lceil\frac{g}{2}\rceil)$, in short of \emph{twisted tube of dimension $g$}, in the sense that ${\rm AC}(2\lfloor\frac{g}{2}\rfloor,\lceil\frac{g}{2}\rceil)$ looks like a twisted toroidal grid. Indeed, ${\rm AC}(2\lfloor\frac{g}{2}\rfloor,\lceil\frac{g}{2}\rceil)$ can  be btained from $C(2\lfloor\frac{g}{2}\rfloor,\lceil\frac{g}{2}\rceil+1)$ by identifying each pair of vertices $(i,0)$ and $(i+\lfloor\frac{g}{2}\rfloor,\lceil\frac{g}{2}\rceil+1)$, where $0 \leq i \le \lfloor\frac{g}{2}\rfloor$, in the fashion of a \emph{Dehn twist} studied in algebraic topology.

\subsection{Properties of the signed graph $(\TT(g),J)$}
 
\begin{lemma}
	\label{property of TT(g)-J}
	For any integer $g\geq 2$, the following statements are true.
	\begin{enumerate}[leftmargin =3em, label=(\roman*)]
		\item \label{TT(g) is a subgraph of PC(g)} $(\TT(g),J)$ is a subgraph  of $\SPC(g-1)$.
		\item \label{TT(g)-transitive} $(\TT(g),J)$ is vertex-transitive.
		\item \label{common negative g-cycle} Any two vertices in $(\TT(g),J)$ belong to a common negative cycle of length $g$.
		\item \label{TTg is g-wide} $(\TT(g),J)$ is $g$-wide.
	\end{enumerate} 
\end{lemma}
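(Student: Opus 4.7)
The plan is to address the four claims in the order \ref{TT(g) is a subgraph of PC(g)}, \ref{TTg is g-wide}, \ref{TT(g)-transitive}, \ref{common negative g-cycle}: the first immediately yields the fourth, and vertex-transitivity (ii) will reduce the final claim (iii) to a one-vertex case. For \ref{TT(g) is a subgraph of PC(g)}, I set $a = \lfloor g/2 \rfloor$ and $b = \lceil g/2 \rceil$, split the vertex set of $\SPC(g-1)$ (described by Proposition~\ref{pro:SPC-coordinate-def}) as $\mathbb{Z}_2^{g-1} = \mathbb{Z}_2^{a} \times \mathbb{Z}_2^{b-1}$, and define the injection $\Phi(i, j) = (h(i), v(j))$ where $v(j) = (\underbrace{1, \ldots, 1}_{j}, 0, \ldots, 0)$ and $h(i) \in \mathbb{Z}_2^{a}$ has its $k$-th coordinate equal to $1$ iff $i \in \{k, k+1, \ldots, k+a-1\} \pmod{2a}$. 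The three properties needed are: $v(j)$ and $v(j+1)$ differ in one coordinate; consecutive cyclic values of $h$ differ in one coordinate; and $h(i+a) = \overline{h(i)}$. The first two make horizontal and vertical edges of $\TT(g)$ map to positive edges of $\SPC(g-1)$, and the last makes an antipodal edge $(i,0)(i+a, b-1) \in J$ map to a pair differing in all $g-1$ coordinates — a negative edge of $\SPC(g-1)$. Claim \ref{TTg is g-wide} then comes for free: the subgraph inclusion gives $g_{ij}(\TT(g), J) \ge g_{ij}(\SPC(g-1))$ for each $ij$, while $\SPC(g-1)$ is itself $g$-wide by a short induction on Lemma~\ref{lem:girthEDC} starting from $\SPC(1) = $ digon.

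For \ref{TT(g)-transitive}, I exhibit two automorphisms of $(\TT(g), J)$ (up to switching) that jointly act transitively. The first is the strict automorphism $\rho(i, j) = (i+1 \bmod 2a, j)$, which maps $J$ to itself. The second is the "diagonal shift" $\tau(i, j) = (i+a \bmod 2a, j+1)$ for $j \le b-2$, with $\tau(i, b-1) = (i, 0)$ — viewed in the cover $\mathbb{Z}^2 / L$ with $L = \langle (2a, 0), (a, b) \rangle$, it is simply translation by $(a, 1)$. The map $\tau$ preserves the underlying toroidal grid, but swaps the top vertical edges $\{(i, b-2)(i, b-1) : i \in \mathbb{Z}_{2a}\}$ with the $J$-edges (while other vertical edges shift down one level). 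Switching at the vertex set $X = \{(i, j) : 0 \le j \le b-2\}$ has edge-cut exactly $J \cup \{\text{top-vertical edges}\}$, so it transforms the signature $J$ into the top-vertical signature, matching the pull-back under $\tau$; thus $\tau$ combined with this switching is a signed-graph automorphism. Since $\tau^{m} \rho^{k}(0, 0) = (k + m a \bmod 2a, m)$ covers every vertex as $(k, m)$ ranges over $\mathbb{Z}_{2a} \times \{0, \ldots, b-1\}$, the action is transitive.

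For \ref{common negative g-cycle}, by \ref{TT(g)-transitive} one vertex can be taken to be $(0, 0)$; let $v = (x_0, y_0)$ be the other. Working in the cover $\mathbb{Z}^2$, I pick $\ell = (a, b)$ if $x_0 \le a$ and $\ell = (-a, b)$ otherwise, along with the unique lift $\tilde v$ of $v$ inside the axis-aligned rectangle with opposite corners $(0, 0)$ and $\ell$. A monotone lattice path from $(0, 0)$ through $\tilde v$ to $\ell$ has length $\|\ell\|_1 = a + b = g$ and uses exactly one cover-edge of the form $(\cdot, b-1)(\cdot, b)$, whose image in the torus is a $J$-edge; hence its projection is a closed walk of length $g$ through $(0, 0)$ and $v$ with exactly one negative edge. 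The main obstacle — the most delicate step of the whole lemma — is verifying simplicity of this projection. I would show that the only nonzero elements of $L$ with $|x| \le a$ and $|y| \le b$ are $\pm (a, b)$ and $\pm (a, -b)$, so the only pairs of distinct cover-vertices inside the rectangle that project to the same torus vertex are the corners $\{(0, 0), (a, b)\}$ (the endpoints, correctly projecting as the start-equals-end of the cycle) and $\{(a, 0), (0, b)\}$. Since any monotone path can visit at most one of $(a, 0), (0, b)$, and a monotone path through $\tilde v$ can always be chosen so as to avoid at least one of these two vertices, the projected cycle is simple, completing the argument.
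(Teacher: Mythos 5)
Your proof is correct and follows essentially the same route as the paper's: an explicit embedding into $\mathbb{Z}_2^{g-1}$ for \ref{TT(g) is a subgraph of PC(g)} (the paper sums canonical edge labels along paths while you give a closed-form coordinate map, but the resulting injections agree up to relabelling), translations of the twisted torus combined with a switching at the cut separating the top row for \ref{TT(g)-transitive}, a geodesic plus a single antipodal edge for \ref{common negative g-cycle}, and the subgraph relation together with the $g$-wideness of $\SPC(g-1)$ for \ref{TTg is g-wide}. Your universal-cover formulation of \ref{TT(g)-transitive} and \ref{common negative g-cycle} is a somewhat more careful write-up of the same idea, in particular making explicit the simplicity of the projected $g$-cycle, which the paper leaves implicit.
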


\begin{proof}
	\ref{TT(g) is a subgraph of PC(g)} We first label the edges of $(\TT(g),J)$ with canonical vectors  
	$\{e_1,e_2,\ldots,e_{g-1}\}$ of $\{0,1\}^{g-1}$ and $e_J$ (all coordinates are $1$)  as follows (indices are now to be understood modulo $2\lfloor\frac{g}{2}\rfloor$):
	\begin{itemize}
		\item $\{(i-1,j),(i,j)\}$ with label $e_i$ if $i \le \lfloor\frac{g}{2}\rfloor$ and with $e_{i-\lfloor\frac{g}{2}\rfloor}$ otherwise.
		\item $\{(0,j),(2\lfloor\frac{g}{2}\rfloor-1,j)\}$ with label $e_{\lfloor\frac{g}{2}\rfloor}$.	
		\item $\{(i,j-1),(i,j)\}$ with label $e_{\lfloor\frac{g}{2}\rfloor+j}$.
		\item $\{(i,j),(i+\lfloor\frac{g}{2}\rfloor,j+\lfloor\frac{g}{2}\rfloor)\}$ with label $e_J$.  
	\end{itemize} 
	Note that the binary sum of the labels of the edges along any cycle of $\TT(g)$ is the all-zero vector. Conversely, if the sum of the labels along a walk is the all-zero vector, then this walk is closed. Then for any path from vertex $(0,0)$ to some vertex $v$ of $\TT(g)$, the binary sum of the labels is the same. Thus we may define the mapping $\phi$ from the vertices of $(\TT(g),J)$ to the vertices of $\SPC(g-1)$ such that for any vertex $v$ of $(\TT(g),J)$, $\phi(v)$ is the binary sum of the labels along any path from $(0,0)$ to $v$. Observe that the number of different coordinates between $\phi(u)$ and $\phi(v)$ is exactly the same a the number of different coordinates between $u$ and $v$. On the other hand, if $uv$ is a positive edge in $(\TT(G),J)$, then $uv \notin J$, and $u$ and $v$ differ in exactly one coordinate; if $uv$ is a negative edge, then $uv \in J$, and $u$ and $v$ differ in all coordinates.
	So by Proposition~\ref{pro:SPC-coordinate-def}, the mapping $\phi$ is an injective homomorphism from $(\TT(g),J)$ to $\SPC(g-1)$, which implies that $(\TT(g),J)$ is a subgraph of $\SPC(g-1)$.

        \medskip
	\ref{TT(g)-transitive}. Let $v_1=(i_1,j_1)$ and $v_2 = (i_2,j_2)$ be two vertices of $\TT(g)$. Let $\phi$ be the mapping 
 \begin{equation*}
 	\phi:(i,j) \longmapsto \left\{\begin{array}{ll}
 	((i + i_2 - i_1) \bmod 2\gl, j+j_2-j_1), & \text{if} \ 0 \leq j \leq  \gc-1-(j_2-j_1), \\
 	((i + \gl + i_2 - i_1) \bmod 2\gl, (j+j_2-j_1) \bmod \gc), & \text{if} \ j \geq \gc-(j_2-j_1). \\
 	\end{array}
 	\right.
 	\end{equation*}
	First observe that $\phi$ is an automorphism of $\TT(g)$ mapping $v_1$ to $v_2$, as $\phi^{-1}$ exists, one can check it, we omit the details. On the other hand, in the signed graph $\phi(\TT(g),J)$, the negative edges are $\{\{(i,j_2-j_1),(i,j_2-j_1-1)\}|0 \leq i \leq 2\gl-1\}$. So switching at $V=\{(i,j)|j \geq j_2-j_1\}$ in $\phi(TT(g),J)$ gives us $(\TT(g),J)$.
	
	\medskip
        \ref{common negative g-cycle}. Since $\TT(g)$ is vertex-transitive, we may assume that one of these two vertices is the origin $(0,0)$. Let $i$ be an integer between $0$ and $2\gl-1$ and $j$ be an integer between $0$ and $\gc-1$, where $i+j \neq 0$. We need to prove that $(0,0)$ and $(i,j)$ are in a common negative cycle of length $g$. By the symmetries of $\TT(g)$, we may assume that $i \leq \gl$ and $i \leq \gc-1$. If we forget about the antipodal edges, there is a shortest path from $(0, 0)$ to $(\gl, \gc-1)$ going through $(i, j)$. Together with the antipodal edge $\{(0,0),(\gl,\gc-1)\}$, we get a negative cycle of length $g$ in $(\TT(g),J)$.  

        \medskip
	\ref{TTg is g-wide}. By Definition~\ref{def:SPC} and Lemma \ref{lem:girthEDC}, $\SPC(g-1)$ is $g$-wide. By~\ref{TT(g) is a subgraph of PC(g)} and the fact that taking subgraphs will not decrease the girths $g_{ij}$, the statement follows. 
\end{proof}

In the sequel, for any two vertices $u_1=(i_1,j_1)$ and $u_2 = (i_2,j_2)$ in $\T(g)$, we define

 $$d^{+}_{\T(g)}(u_1,u_2) =|i_1-i_2|+|j_1-j_2|, \\ \quad d^{-}_{\T(g)}(u_1,u_2) = 2\gl-|i_1-i_2|+|j_1-j_2|.$$
Intuitively speaking, $d^{+}_{\T(g)}(u_1,u_2)$ is the shortest length of a $(u_1-u_2)$-path that does not use any edges of type $\{(0,j),(2\gl-1,j)\}$, while $d^{-}_{\T(g)}(u_1,u_2)$ does. 
As $$d^{+}_{\T(g)}(u_1,u_2)+d^{-}_{\T(g)}(u_1,u_2) = 2\gl+2|j_1-j_2| \leq 2\gl+2(\gc-1)=2g-2,$$  
we have the following observation.
 
\begin{observation}
	\label{obs:diatance in T(g)}
	For any two vertices $u=(i_1,j_1)$ and $v=(i_2,j_2)$ in $\T(g)$,
	$$d_{{\rm T}(g)}(u,v)= \min\{d^{+}_{\T(g)}(u,v), d^{-}_{\T(g)}(u,v)\} \leq  g-1.$$
	Consequently, if $|i_1-i_2| \leq \gl$, then $d^{+}_{\T(g)}(u,v) \leq d^{-}_{\T(g)}(u,v)$, so $d_{{\rm T}(g)}(u,v)= d^{+}_{\T(g)}(u,v)$. Otherwise,  $d^{+}_{\T(g)}(u,v) > d^{-}_{\T(g)}(u,v)$, so $d_{{\rm T}(g)}(u,v)= d^{-}_{\T(g)}(u,v)$.
\end{observation}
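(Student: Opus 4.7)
The plan is to exploit the fact that $\T(g)=C_{2\gl}\square P_{\gc}$ is a Cartesian product, so distances decompose coordinate-wise: for any two vertices $u=(i_1,j_1)$ and $v=(i_2,j_2)$,
\[
d_{\T(g)}(u,v) = d_{C_{2\gl}}(i_1,i_2) + d_{P_{\gc}}(j_1,j_2).
\]
The second summand is obviously $|j_1-j_2|$, and the first summand is $\min\{|i_1-i_2|,\,2\gl-|i_1-i_2|\}$, since $C_{2\gl}$ offers exactly two arcs between any two vertices. Plugging these in immediately gives
\[
d_{\T(g)}(u,v) = \min\bigl\{\,|i_1-i_2|+|j_1-j_2|,\ 2\gl-|i_1-i_2|+|j_1-j_2|\,\bigr\} = \min\{d^{+}_{\T(g)}(u,v),\,d^{-}_{\T(g)}(u,v)\}.
\]

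For the upper bound, I would observe that $\min\{|i_1-i_2|,\,2\gl-|i_1-i_2|\}\leq \gl$ and $|j_1-j_2|\leq \gc-1$, so the sum is at most $\gl+\gc-1=g-1$, which establishes the inequality $d_{\T(g)}(u,v)\leq g-1$.

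For the consequence, I would just compare the two candidates: we have $d^{+}_{\T(g)}(u,v)\leq d^{-}_{\T(g)}(u,v)$ iff $|i_1-i_2|\leq 2\gl-|i_1-i_2|$, i.e., iff $|i_1-i_2|\leq \gl$. The ``otherwise'' case (when $|i_1-i_2|>\gl$) then forces the reverse inequality and hence selects $d^{-}_{\T(g)}(u,v)$ as the minimum. The only mild subtlety is the formal justification that the two arcs in $C_{2\gl}$ really give the two candidates above, but this is standard and requires no further argument; there is no real obstacle here, as the statement is essentially unpacking the definition of the Cartesian product metric.
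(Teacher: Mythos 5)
Your proof is correct and takes essentially the same route as the paper: the paper treats the identity $d_{\T(g)}(u,v)=\min\{d^{+}_{\T(g)}(u,v),d^{-}_{\T(g)}(u,v)\}$ as immediate from the cylinder structure and gets the bound $g-1$ from the displayed computation $d^{+}_{\T(g)}(u,v)+d^{-}_{\T(g)}(u,v)\leq 2g-2$, while you make the first part explicit via the Cartesian product metric and bound each summand separately. Both arguments are sound and interchangeable.
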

 
 The following observation is easy but useful, and also mentioned in \cite{BFN17,BFN19}.
\begin{observation}
	\label{distance decide in unbalance cycle}
	Let $(G,\sigma)$ be a $g$-wide signed graph and let $C$ be a negative cycle of length $g$ in $(G,\sigma)$. Then, for any pair $(u, v)$ of vertices of $C$, the distance in $G$ between $u$ and $v$ is determined by their distance in $C$.
\end{observation}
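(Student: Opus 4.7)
The plan is to argue by contradiction: given two vertices $u, v$ on $C$, set $k = d_C(u,v) \leq \lfloor g/2 \rfloor$. Since $C$ itself provides a $u$-$v$ path of length $k$, the inequality $d_G(u,v) \leq k$ is trivial, so I only need to rule out $d_G(u,v) < k$. To this end, I would take a shortest $u$-$v$ path $P$ in $G$ (of length strictly less than $k$) and close it against each of the two arcs $P_1, P_2$ of $C$ (of lengths $k$ and $g-k$ respectively) to form two closed walks $W_i = P \cdot P_i^{-1}$.

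The key numerical observation is that both closed walks are strictly shorter than $g$:
\[
|W_1| = d_G(u,v) + k < 2k \leq g \quad\text{and}\quad |W_2| = d_G(u,v) + (g-k) < g.
\]
A one-line sign computation using $\sigma(P_1)\sigma(P_2) = \sigma(C) = -$ gives $\sigma(W_1)\sigma(W_2) = \sigma(P)^2 \cdot \sigma(C) = -$, so exactly one of $W_1, W_2$ is a negative closed walk. Hence we have exhibited a negative closed walk in $(G, \sigma)$ of length strictly less than $g$.

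The only mildly delicate step is to cash this in for a contradiction with $g$-wideness cleanly, since $g$-wideness is a parity-sensitive condition. The underlying fact I would invoke is that every negative closed walk in $C_{-g}$ has length at least $g$: such a walk must wind around $C_{-g}$ an odd number of times (possibly with extra backtracking), and hence has length at least $g$ regardless of its parity. Combined with the $g$-wideness hypothesis $g_{1i}(G,\sigma) \geq g_{1i}(C_{-g})$ for both $i \in \{0,1\}$, this rules out the negative closed walk constructed above, contradicting $d_G(u,v) < k$ and forcing $d_G(u,v) = k = d_C(u,v)$.
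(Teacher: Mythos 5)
Your proof is correct. The paper gives no proof of this observation (it is stated as ``easy'' with a pointer to \cite{BFN17,BFN19}), so there is nothing to compare against; your argument --- closing a hypothetical shorter $u$--$v$ path against both arcs of $C$, noting that exactly one of the two resulting closed walks is negative and both are shorter than $g$ --- is the natural one and supplies the missing details. You are also right that the only delicate point is the parity-sensitivity of $g$-wideness, and your resolution (every negative closed walk in $C_{-g}$ has length at least $g$, so $g_{10}(C_{-g})$ and $g_{11}(C_{-g})$ are each at least $g$, whence a negative closed walk of length less than $g$ in $(G,\sigma)$ violates $g_{1j}(G,\sigma)\geq g_{1j}(C_{-g})$ for whichever parity $j$ it has) is exactly what is needed.
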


By Observation~\ref{obs:diatance in T(g)}, the following special case holds.
 
\begin{observation}
	\label{obs:coordinates of distance t}
	Suppose $u = (i,j)$ is a vertex in $\T(g)$ with $0 \le i < 2\gl$ and $0 \le j < \gc$, and $t$ is an integer satisfying $1 \leq t \leq g-1$. Then, $d_{\T(g)}((0,0),(i,j))=t$ if and only if either  
	\begin{itemize}
		\item $i \leq \gl$ and $i+j=t$, or
		\item $i > \gl$ and $i-j=2\gl-t$.
	\end{itemize}
\end{observation}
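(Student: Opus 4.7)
The plan is to derive this as a direct specialization of Observation~\ref{obs:diatance in T(g)} to the case $u_1=(0,0)$, $u_2=(i,j)$. With that choice, the two candidate walk-lengths become $d^{+}_{\T(g)}((0,0),(i,j))=i+j$ and $d^{-}_{\T(g)}((0,0),(i,j))=2\gl-i+j$, since $i,j\geq 0$ means the absolute values disappear.

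First, I would split into the two cases dictated by Observation~\ref{obs:diatance in T(g)}. If $i\le \gl$, that observation tells us the distance equals $d^{+}_{\T(g)}((0,0),(i,j))=i+j$, so the equality $d_{\T(g)}((0,0),(i,j))=t$ is exactly the equality $i+j=t$. If $i>\gl$, the same observation tells us the distance equals $d^{-}_{\T(g)}((0,0),(i,j))=2\gl-i+j$, so $d_{\T(g)}((0,0),(i,j))=t$ is exactly $2\gl-i+j=t$, i.e., $i-j=2\gl-t$. This gives both implications of the ``if and only if'' at once in each branch.

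The only mild care needed is on the boundary $i=\gl$, where both formulas $d^{+}$ and $d^{-}$ coincide (both give $\gl+j$), so the case split is consistent: the first bullet handles $i=\gl$ with $i+j=\gl+j=t$, and indeed the second bullet (which excludes $i=\gl$) cannot apply. I would also briefly note that the hypothesis $1\le t\le g-1$ is compatible with the bound $d_{\T(g)}(u,v)\le g-1$ already recorded in Observation~\ref{obs:diatance in T(g)}, so no additional values of $t$ need be examined.

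I do not anticipate any real obstacle: the statement is essentially Observation~\ref{obs:diatance in T(g)} rewritten in coordinates for the specific origin vertex $(0,0)$, and the proof is a two-line computation in each of the two cases $i\le \gl$ and $i>\gl$.
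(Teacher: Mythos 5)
Your proposal is correct and matches the paper exactly: the paper states this observation as an immediate special case of Observation~\ref{obs:diatance in T(g)} with $u_1=(0,0)$, which is precisely the two-case computation you carry out (including the harmless boundary agreement at $i=\gl$). No gaps.
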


\begin{lemma}
	\label{lem:fg(ad)=d}
	For any two vertices $u$ and $v$ in $(\TT(g),J)$, $f_g(ad_{(\TT(g),J)}(u,v))=d_{{\rm T}(g)}(u,v)$.
\end{lemma}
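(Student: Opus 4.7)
The plan is to reduce the statement to a direct distance calculation on a single negative $g$-cycle. By vertex-transitivity of $(\TT(g),J)$ (Lemma~\ref{property of TT(g)-J}\ref{TT(g)-transitive}), I may assume $u=(0,0)$ and let $v=(i,j)$ with $0\le i\le 2\lfloor g/2\rfloor-1$ and $0\le j\le \lceil g/2\rceil-1$, so that $d_{\T(g)}(u,v)$ is computed via Observation~\ref{obs:diatance in T(g)}. The ultimate goal is to compute $ad_{(\TT(g),J)}(u,v)$ and verify the two branches of $f_g$ match $d_{\T(g)}(u,v)$.

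Next I would pin down $d_{\TT(g)}(u,v)$ together with an explicit pair of paths realizing it. By Lemma~\ref{property of TT(g)-J}\ref{common negative g-cycle}, there is a negative $g$-cycle $C$ of $(\TT(g),J)$ passing through both $u$ and $v$, and by Lemma~\ref{property of TT(g)-J}\ref{TTg is g-wide} combined with Observation~\ref{distance decide in unbalance cycle}, $d_{\TT(g)}(u,v)$ equals the length of the shorter arc of $C$ between $u$ and $v$. Following the construction used in the proof of Lemma~\ref{property of TT(g)-J}\ref{common negative g-cycle}, $C$ may be chosen so that its two $u$--$v$ arcs are: (i) an arc of length $d_{\T(g)}(u,v)$ lying entirely in $\T(g)$, which is therefore positive, and (ii) an arc of length $g-d_{\T(g)}(u,v)$ containing the unique $J$-edge of $C$, which is therefore negative. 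Consequently, $d_{\TT(g)}(u,v)=\min\{d_{\T(g)}(u,v),\,g-d_{\T(g)}(u,v)\}$.

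It remains to determine the sign of $ad_{(\TT(g),J)}(u,v)$. If $d_{\T(g)}(u,v)\le \lfloor g/2\rfloor$, arc (i) is the shorter one and is positive, so $ad_{(\TT(g),J)}(u,v)=d_{\T(g)}(u,v)>0$ and $f_g$ returns this value unchanged. If $d_{\T(g)}(u,v)>\lfloor g/2\rfloor$, arc (ii) is the shorter one and is negative, giving $d_{\TT(g)}(u,v)=g-d_{\T(g)}(u,v)$; to conclude $ad_{(\TT(g),J)}(u,v)<0$, I must rule out any positive $(u,v)$-path of this length. Otherwise, concatenating such a positive path with arc (ii) would produce a negative closed walk of even length $2(g-d_{\T(g)}(u,v))<g$, which contradicts the $g$-wideness of $(\TT(g),J)$, since $g_{10}(C_{-g})=g$ when $g$ is even and $g_{10}(C_{-g})=\infty$ when $g$ is odd. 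Hence $ad_{(\TT(g),J)}(u,v)=-(g-d_{\T(g)}(u,v))$, and the second branch of $f_g$ gives $f_g(ad_{(\TT(g),J)}(u,v))=g+ad_{(\TT(g),J)}(u,v)=d_{\T(g)}(u,v)$.

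The main obstacle is the sign rule-out step in the second case: one must leverage $g$-wideness to preclude the simultaneous existence of a positive and a negative shortest $(u,v)$-path, using the fact that any such pair would combine into a negative closed walk of length strictly below $g_{10}(C_{-g})$. Everything else is essentially bookkeeping on cylinder distances packaged by Observation~\ref{obs:diatance in T(g)} together with the symmetries of $(\TT(g),J)$ already proved in Lemma~\ref{property of TT(g)-J}.
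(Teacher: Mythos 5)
Your argument follows the same route as the paper's own proof: identify $d_{\TT(g)}(u,v)$ with the shorter arc of the negative $g$-cycle from Lemma~\ref{property of TT(g)-J}\ref{common negative g-cycle} via Observation~\ref{distance decide in unbalance cycle}, then split on whether $d_{\T(g)}(u,v)\le\gl$. Your parity argument ruling out a positive $(u,v)$-path of length $g-d_{\T(g)}(u,v)$ in the second case (positive path plus the negative arc would give a negative even closed walk of length $2(g-d_{\T(g)}(u,v))<g$, contradicting $g$-wideness) is precisely the justification the paper leaves implicit, and it is correct.

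The one step that does not hold up is the opening reduction ``by vertex-transitivity I may assume $u=(0,0)$.'' The identity you are proving is not invariant under the automorphisms that realize vertex-transitivity: the maps $\phi$ in the proof of Lemma~\ref{property of TT(g)-J}\ref{TT(g)-transitive} are Dehn twists that send some vertical edges of the cylinder into $J$ and vice versa, so they do not preserve $\T(g)$ and hence do not preserve $d_{\T(g)}(u,v)$; moreover they are automorphisms of the signed graph only up to a switching, which can flip the sign of $ad_{(\TT(g),J)}(u,v)$. Concretely, in $\TT(4)$ the twist sends the pair $\{(0,0),(0,1)\}$, with $d_{\T(4)}=1$, to the antipodal pair $\{(0,1),(2,0)\}$, with $d_{\T(4)}=3$; so establishing the identity for pairs based at $(0,0)$ does not transport it to all pairs. (This is also why the paper uses vertex-transitivity in the proof of Lemma~\ref{property of TT(g)-J}\ref{common negative g-cycle}, whose conclusion is automorphism-invariant, but not in the proof of the present lemma.) The repair is routine: drop the reduction and note that for an arbitrary pair one can use only the rotations and reflections of the cylinder, which preserve both $\T(g)$ and $J$, to place $u=(0,j_1)$ and $v=(i_2,j_2)$ with $0\le i_2\le\gl$ and $j_1\le j_2$; a monotone cylinder path through $u$ and $v$ joining an antipodal pair then yields, together with the corresponding $J$-edge, exactly the negative $g$-cycle with the two arcs you describe, and since Observation~\ref{obs:diatance in T(g)} is stated for arbitrary pairs the rest of your argument goes through verbatim.
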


\begin{proof}
	By Lemma~\ref{property of TT(g)-J}\ref{TTg is g-wide}, $\TT(g)$ is $g$-wide, and by Lemma~\ref{property of TT(g)-J}\ref{common negative g-cycle}, for any two vertices $u,v$ in $\TT(g)$, there is a negative cycle of length $g$ in $(\TT(g),J)$ containing $u,v$.  
	
	If $d_{{\rm T}(g)}(u,v) \leq \gl$, then by Observation~\ref{distance decide in unbalance cycle}, $d_{\TT(g)}(u,v)=d_{{\rm T}(g)}(u,v) \leq \gl$, which means that there exists a positive $(u-v)$-path of length $d_{{\rm T}(g)}(u,v)$ in $(\TT(g),J)$. Thus, by Definition~\ref{def-eq-algebraic distance}, $ad_{(\TT(g),J)}(u,v)=d_{{\rm T}(g)}(u,v) > 0$, hence $f_g(ad_{(\TT(g),J)}(u,v))=d_{{\rm T}(g)}(u,v)$; 
	
	If $d_{{\rm T}(g)}(u,v) > \gl$, then by Observation~\ref{distance decide in unbalance cycle}, $d_{\TT(g)}(u,v)=g-d_{{\rm T}(g)}(u,v) < d_{{\rm T}(g)}(u,v)$, which implies that there does not exist a positive $(u-v)$-path of length $d_{{\rm TT}(g)}(u,v)$ in $(\TT(g),J)$. Thus, by Definition~\ref{def-eq-algebraic distance}, $ad_{(\TT(g),J)}(u,v)=d_{{\rm T}(g)}(u,v)-g < 0$, hence $f_g(ad_{(\TT(g),J)}(u,v))=g+ad_{(\TT(g),J)}(u,v)=d_{{\rm T}(g)}(u,v)$.
\end{proof}

\subsection{$(\TT(g),J)$ is $\SP$-complete.}

In this section, we provide a new family of smaller bounds that are $\SP$-complete.

\begin{theorem}
	\label{thm:smallest bound}
	For every integer $g\geq 2$, the signed graph $(\TT(g),J)$, of order $\lfloor g^2/2\rfloor$, is $\SP$-complete.
\end{theorem}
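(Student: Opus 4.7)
The plan is to apply Theorem~\ref{thm:bounds if and only if}, so I must exhibit a $(g-1)$-partial girth-transformed $(\TT(g),J)$-distance graph together with a nonempty $g$-closed family of triangles. I would take $(B',\omega)$ to be the weighted signed graph on vertex set $V(\TT(g))$ with, for every pair $u\neq v$, an edge $uv$ of weight $\omega(uv)=d_{\T(g)}(u,v)$; by Observation~\ref{obs:diatance in T(g)} all these weights lie in $\{1,\dots,g-1\}$, and by Lemma~\ref{property of TT(g)-J}\ref{common negative g-cycle} together with Lemma~\ref{lem:fg(ad)=d}, the graph $(B',\omega)$ is indeed a $(g-1)$-partial girth-transformed $(\TT(g),J)$-distance graph. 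I would then let $\mathcal{T}$ be the family of all triangles of $B'$ whose weighted triple lies in $\mathcal{L}_g$.

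The heart of the proof is then the $g$-closedness of $\mathcal{T}$: for every edge $xy$ of weight $p$ and every triple $(p,q,r)\in\mathcal{L}_g$ with $p,q,r\geq 1$, a vertex $z$ must be produced with either $\omega(xz)=q$ and $\omega(yz)=r$, or $\omega(xz)=g-q$ and $\omega(yz)=g-r$. Using the vertex-transitivity of $(\TT(g),J)$ (Lemma~\ref{property of TT(g)-J}\ref{TT(g)-transitive}), I would fix $x=(0,0)$; and using the rotation and reflection symmetries of $\T(g)$, I would place $y$ into a canonical form $y=(i,j)$ with $i+j=p$, $0\leq i\leq\gl$, $0\leq j\leq \gc-1$.

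With these reductions in place, I would produce $z=(a,b)$ explicitly, distinguishing the two types of $g$-wide triples identified by Proposition~\ref{g-good triples}. When $pqr>0$ the triangle ``fits'' in $\T(g)$ itself, and the target distances translate via Observation~\ref{obs:coordinates of distance t} into the $L^1$ equations $a+b=q$ and $|i-a|+|j-b|=r$, whose integer solutions can be read off from the parity $p+q+r\equiv 0\pmod 2$ and the triangle inequalities $\max\{2p,2q,2r\}\leq p+q+r$. When $pqr<0$, one of the three sides must be realized along the long route $d^{-}_{\T(g)}$, i.e., the triangle must wind around the cylinder and exploit an antipodal edge of $J$; the constraints $p+q+r\equiv g\pmod 2$ and $g\leq p+q+r\leq g+2\min\{p,q,r\}$ from Proposition~\ref{g-good triples}\ref{g-good-product-negative} are exactly what is needed to ensure that the resulting $(a,b)$ still lies in $\{0,\dots,2\gl-1\}\times\{0,\dots,\gc-1\}$.

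I expect the main obstacle to be precisely this last point: managing the wrap-around cases in which $z$ is forced toward the far side of the cylinder, and checking that between the short-route and antipodal candidates at least one always lies in the vertex set and realizes the desired pair of distances. Once this bookkeeping is carried out cleanly, the verification reduces to the arithmetic bounds on $(p,q,r)$ supplied by Proposition~\ref{g-good triples}, completing the hypothesis of Theorem~\ref{thm:bounds if and only if} and hence the $\SP$-completeness of $(\TT(g),J)$.
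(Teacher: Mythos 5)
Your set-up is exactly the paper's: the complete weighted graph on $V(\TT(g))$ with $\omega(uv)=d_{\T(g)}(u,v)=f_g(ad_{(\TT(g),J)}(u,v))$ (via Lemma~\ref{lem:fg(ad)=d} and Lemma~\ref{property of TT(g)-J}\ref{common negative g-cycle}), the reduction by vertex-transitivity and horizontal symmetry to $x=(0,0)$ and $y=(a,b)$ with $a+b=p$, and the translation of the $g$-closedness condition into $L^1$-distance equations via Observation~\ref{obs:coordinates of distance t}. However, the heart of the theorem --- actually producing $z=(c,d)$ and verifying $0\leq c\leq 2\gl-1$, $0\leq d\leq \gc-1$ together with the two required distances --- is precisely what you defer, and it is not a routine consequence of the arithmetic in Proposition~\ref{g-good triples}; it occupies most of the paper's proof. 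So the proposal has the right strategy but a genuine gap where the main construction should be.

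Moreover, the case split you propose (by the sign of $pqr$) does not quite match the situation you will face. In the girth-transformed setting all weights are already normalized to $\{1,\dots,g-1\}$, so every triple you must realize is presented in its all-positive form and falls under Proposition~\ref{g-good triples}\ref{g-good-product-positive}; the ``negative'' triples have been absorbed into the alternative ``$\omega(zx)=g-q$ and $\omega(zy)=g-r$'' in Definition~\ref{def:triangle g closed}. The paper instead splits into three cases according to the size of $r$ (after normalizing $q\leq\gl$): roughly $|a+b-q|\leq r\leq a+|b-q|$, where $z$ sits between $x$ and $y$ with no wrap-around; $a+|b-q|+2\leq r\leq\min\{2\gl+b-a-q,\,a+b+q,\,2g-a-b-q\}$, where the $z$--$y$ geodesic winds around the cylinder (so $c\geq\gl$ and $d_{\T(g)}(z,y)=d^-_{\T(g)}(z,y)$); and $r\geq 2\gl+b-a-q+2$, where one must switch to realizing $(g-q,g-r)$ instead of $(q,r)$. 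Each case comes with an explicit formula for $(c,d)$ whose membership in the grid uses both the parity condition and \emph{both} inequalities $\max\{2p,2q,2r\}\leq p+q+r\leq 2g$. Until you carry out this three-way analysis (or an equivalent one) and the accompanying bound checks, the proof is incomplete.
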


\begin{proof}
	Let $(B,w)$ be a weighted signed graph where $B$ is a complete graph on vertex set $V(\TT(g))$, and for each edge $uv$, $\omega(uv)=f_g(ad_{(\TT(g),J)}(u,v))$. By Lemma~\ref{property of TT(g)-J}~\ref{common negative g-cycle} and Definition~\ref{def-girth transformed}, $(B,w)$ is a $(g-1)$-partial girth-transformed $(TT(g),J)$-distance graph. 	It is clear that the edge set of $(B,w)$ is non-empty. Let $\mathcal{T}$ be the collection of all triangles in $(B,w)$, we shall show that $\mathcal{T}$ is $g$-closed.

	Let $p,q,r$ be three integers satisfying $1 \leq p,q,r \leq g-1$, such that $(p,q,r)$ is $g$-wide, $\mathcal{E}$ be the set of edges appeared in $\mathcal{T}$.  Assume $e=xy \in \mathcal{E}$ with $\omega(e)=p$. We shall find a triangle $xyz \in \mathcal{T}$ such that either $\omega(zx)=q$ and $\omega(zy)=r$, or $\omega(zx)=g-q$ and $\omega(zy)=g-r$. By Lemma~\ref{property of TT(g)-J}\ref{TT(g)-transitive}, $\TT(g)$ is vertex-transitive, so we may assume that $x=(0,0)$. By the horizontal symmetries of $\TT(g)$ (recall that there is an edge $\{(2\gl-1,j),(0,j)\}$ for each $j$), we may assume that $y=(a,b)$, where $0 \leq a \leq \gl$, so by Observation~\ref{obs:coordinates of distance t}, $b=p-a$. Therefore, by Lemma~\ref{lem:fg(ad)=d}, it suffices to show that there exists a vertex $z=(c,d)$ in $\TT(g)$ where $c$ and $d$ are two integers satisfying $0 \leq c \leq 2\gl-1$ and $0 \leq d \leq \gc-1$, such that either	
		\begin{equation}
		\label{smallest-traget-1}
		\tag{C2-1}
		d_{\T(g)}(z,x)=q, \ \ d_{\T(g)}(z,y)=r,
		\end{equation} 
		or 
		\begin{equation}
		\label{smallest-traget-2}
		\tag{C2-2}
		d_{\T(g)}(z,x)=g-q, \ \ d_{\T(g)}(z,y)=g-r.
		\end{equation}   
	We may also assume that $q \leq \gl$, for otherwise, we can replace $q$ with $g-q$ and replace $r$ with $g-r$ such that \eqref{smallest-traget-1} or \eqref{smallest-traget-2} holds. 
	
	By Proposition~\ref{g-good triples}\ref{g-good-product-positive}, $p,q,r$ satisfy the triangle-inequality. So $|a+b-q|=|p-q|\leq  r \leq \min\{p+q,2g-p-q\}=\min\{a+b+q,2g-a-b-q\}$, also  $r$ and $p+q = a+b+q$ have the same parity. 
	We also observe that all of $a+b+q$, $a+|b-q|$, $2\gl+b-a-q$ and $2g-a-b-q$ have the same parity, so depending on the value of $r$, we determine $z=(c,d)$ as follows.
	\begin{enumerate}[label=(\roman*)]
		\item \label{S1} $|a+b-q|\leq r \leq a+|b-q|$.
		\begin{itemize}
			\item   If $b \geq q$, then $r=a+b-q$, and we let $c=0$ and $d=q$.
			\item If $b < q$, then $|a+b-q| \leq r \leq a+q-b$, let $c = \frac{a-b+q-r}{2}$ and $d = \frac{q+r-a+b}{2}$.
		\end{itemize} 
		\item \label{S3} If $a+|b-q|+2 \leq  r \leq \min\{2\gl+b-a-q,a+b+q, 2g-a-b-q\}$, then let  
		$c = 2\gl+\frac{a+b-q-r}{2}$ and $d=\frac{a+b+q-r}{2}$.  
		\item \label{S4} If $2\gl+b-a-q+2 \leq r \leq \min\{a+b+q,2g-a-b-q\}$, then let  
		$c = \frac{a-b-q+r}{2}$ and $d =g-\frac{a-b+q+r}{2}$. 
	\end{enumerate}
 
	As $(p,q,r)$ is $g$-wide, $p+q+r=a+b+q+r$ is even, and consequently, $a+b-q-r$, $a+b+q-r$, $a-b-q+r$ and $a-b+q+r$ are all even, so in each case, $c$ and $d$ are integers. We now proceed to prove the validity of our choices.

\medskip
        
	It is trivial to verify that \eqref{smallest-traget-1} holds for Case~\ref{S1} with $b \geq q$. 

\medskip
        
	For Case~\ref{S1} with $b < q$, note that $c=a-\frac{a+b-q+r}{2}$, as $|a+b-q| \leq r \leq a+q-b$, we have
	\begin{equation}
	\label{last-eq-1-c<a}
		0= \frac{a-b+q-(a+q-b)}{2}  \leq c \leq a-\frac{a+b-q+|a+b-q|}{2} \leq a \leq \gl.
	\end{equation}
	Similarly, recall that $d=b+\frac{r-(a+b-q)}{2}$, so we have
	\begin{equation}
	\label{last-eq-1-b<d}
		b \leq b+\frac{|a+b-q|-(a+b-q)}{2} \leq d \leq \frac{(a+q-b)+(q+b-a)}{2}= q \leq \gl.
	\end{equation}
Observe that $c+d=q$, so by Observation~\ref{obs:coordinates of distance t}, we have 
\begin{equation}
	\label{last-1-distance=q}
	d_{\T(g)}(z,x)=q.
\end{equation}
 By Inequality \eqref{last-eq-1-c<a}, we have $|a-c| \leq \gl$. Thus by Observation~\ref{obs:diatance in T(g)}, Inequalities~\eqref{last-eq-1-c<a}, \eqref{last-eq-1-b<d},  
\begin{equation}
\label{last-1-distance=r}
	d_{\T(g)}(z,y)=d^+_{\T(g)}(z,y)=(a-c)+(d-b)=r.
\end{equation}
Therefore, by  Inequalities~\eqref{last-1-distance=q}, \eqref{last-1-distance=r},   \eqref{smallest-traget-1} holds and the case is done. 

\medskip

For Case~\ref{S3}, note that $c=a+\gl+\frac{2\gl+b-a-q-r}{2}$ and  $r \leq 2\gl+b-a-q$, we have  
$$c \geq a+\gl+\frac{2\gl+b-a-q-(2\gl+b-a-q)}{2}=a+\gl \geq \gl.$$
On the other hand, as $c = 2\gl+\frac{a+b-q-r}{2}$  and $r \geq a+|b-q|+2$, we have
$$c \leq 2\gl+\frac{a+b-q-(a+|b-q|+2)}{2}=2\gl-1+\frac{b-q-|b-q|}{2} \leq  2\gl-1.$$
Thus we have
	\begin{equation}
\label{last-eq-2-a<c}
\gl   \leq \gl +a  \leq  c \leq   2\gl-1,
\end{equation}
Similarly, note that $d=q+\frac{a+b-q-r}{2}$ and $r \geq a+|b-q|+2$, we have
$$d \leq q+ \frac{a+b-q-(a+|b-q|+2)}{2}=q-1+\frac{b-q-|b-q|}{2} \leq q-1.$$
On the other hand, as $r \leq a+b+q$, $d \geq \frac{a+b+q-(a+b+q)}{2}=0$. Therefore,
\begin{equation}
\label{last-eq-2-b<d}
0 \leq d \leq q-1+\frac{b-q-|b-q|}{2} \leq q-1.
\end{equation}

Note that $c-d= 2\gl-q$, by Inequality~\eqref{last-eq-2-a<c} and Observation~\ref{obs:coordinates of distance t}, we have
\begin{equation}
	\label{last-2-distance=q}
	d_{\T(g)}(z,x)=q.
\end{equation}
By Inequality~\eqref{last-eq-2-b<d}, if $b \geq q$, then $d \leq q-1<b$, otherwise $d \leq q-1+(b-q)=b-1$. So it always holds that $d<b$. Recall that Inequality~\eqref{last-eq-2-a<c} ensures that $c-a \geq \gl$, so by Observation~\ref{obs:diatance in T(g)},
\begin{equation}
	\label{last-2-distance=r}
	d_{\T(g)}(z,y) = d^-_{\T(g)}(z,y)=2\gl-(c-a)+(b-d)=r.
\end{equation}
Thus, by Inequality~\eqref{last-2-distance=q} and Inequality~\eqref{last-2-distance=r}, Inequality~\eqref{smallest-traget-1} holds, and this case is done. 

\medskip

For Case~\ref{S4}, recall that $c = \frac{a-b-q+r}{2}$ and $2\gl+b-a-q+2 \leq r \leq a+b+q$, it follows that 
\begin{equation}
	\label{last-eq-3-c<a}
	0 < \gl-q+1 = \frac{a-b-q+(2\gl+b-a-q+2)}{2} \leq c \leq  \frac{a-b-q+(a+b+q)}{2}=a \leq \gl.
\end{equation} 
Similarly, recall that $d =g-\frac{a-b+q+r}{2}=b+g-\frac{a+b+q+r}{2}$, and $r \leq 2g-a-b-q$, we have
$$d \geq  b+g-\frac{a+b+q+(2g-a-b-q)}{2} =b \geq 0.$$
On the other hand, as $r \geq 2\gl+b-a-q+2$, we have 
$$d \leq g-\frac{a-b+q+(2\gl+b-a-q+2)}{2}=\gc-1.$$
Therefore, we have
\begin{equation}
	\label{last-eq-3-b<d}
	0 \leq b  \leq d \leq \gc-1.
\end{equation}

Note that $c+d=g-q$, so by Observation~\ref{obs:coordinates of distance t}, we have 
\begin{equation}
	\label{distance-3=g-q}
	d_{\T(g)}(z,x)=g-q.
\end{equation}
By Inequality~\eqref{last-eq-3-c<a}, $0 \leq a-c \leq \gl$. Thus, by Observation~\ref{obs:diatance in T(g)}, Inequality~\eqref{last-eq-3-c<a} and Inequality~\eqref{last-eq-3-b<d}, 	
\begin{equation}
	\label{distance3=g-r}
	d_{\T(g)}(z,y)=d^+_{\T(g)}(z,y)=(a-c)+(d-b)=g-r.
\end{equation}
Thus, by Inequality~\eqref{distance-3=g-q} and Inequality~\eqref{distance3=g-r}, we know that \eqref{smallest-traget-2} holds, and this case is done.

\medskip
	This completes the proof of this theorem.
\end{proof}

\section{Concluding remarks}

In this work we have observed a strong connection between the notion of Extended Double Cover and some strong conjectures in extension of the Four-Color Theorem. We propose a stronger conjecture as follows:

\begin{conjecture}
Given a signed graph $(B,\pi)$ in $\mathcal{C}_{11}\cup \mathcal{C}_{10}$, if it is $\mathcal{P}$-complete, then ${\rm EDC}(B,\pi)$ is also $\mathcal{P}$-complete.
\end{conjecture}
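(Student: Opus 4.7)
The plan is to follow the architecture of the $\mathcal{SP}$-complete case proved in Section~\ref{sec:k4minorfree}: starting from a planar signed graph $(G,\sigma)$ that satisfies the no-homomorphism conditions with respect to ${\rm EDC}(B,\pi)$, I would construct an auxiliary planar signed graph $(G^*,\sigma^*)$ satisfying the corresponding no-homomorphism conditions with respect to $(B,\pi)$ itself, invoke the $\mathcal{P}$-completeness of $(B,\pi)$ to obtain a homomorphism $(G^*,\sigma^*)\to(B,\pi)$, and then lift it to the desired homomorphism $(G,\sigma)\to{\rm EDC}(B,\pi)$.

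For the setup, Lemma~\ref{lem:girthEDC} shows that ${\rm EDC}$ exchanges the classes $\mathcal{C}_{10}$ and $\mathcal{C}_{11}$, so the girth inequalities $g_{ij}(G,\sigma)\geq g_{ij}({\rm EDC}(B,\pi))$ force $(G,\sigma)$ into the class opposite to $(B,\pi)$: either $(B,\pi)$ is antibalanced and $(G,\sigma)$ is signed bipartite, or the other way around. A homomorphism $f:(G,\sigma)\to{\rm EDC}(B,\pi)$ decomposes as a pair $(\phi,\epsilon)$ with $\phi:V(G)\to V(B)$ and $\epsilon:V(G)\to\{+,-\}$. In the bipartite case, $\epsilon$ is essentially determined by the bipartition of $G$ (up to a global flip); in the antibalanced case, after switching both $(G,\sigma)$ and ${\rm EDC}(B,\pi)$ to their all-negative normal forms, $\epsilon$ is captured by a cocycle-type choice on the edges of $G$. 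Once $\epsilon$ is under control, the residual constraints on $\phi$ — namely that each edge $uv$ of $G$ must be assigned either an edge of $(B,\pi)$ of the appropriate sign between $\phi(u)$ and $\phi(v)$, or a collapse $\phi(u)=\phi(v)$ corresponding to a use of a negative edge of the form $x^+x^-$ in ${\rm EDC}(B,\pi)$ — are packaged into the new signed graph $(G^*,\sigma^*)$, the planar analogue of the weighted distance-graph construction of Section~\ref{sec:k4minorfree}. Concretely, one would quotient $V(G)$ by the equivalence generated by these collapses and equip the result with a signature inherited from the normal form of $\sigma$; the goal is to ensure that $(G^*,\sigma^*)$ is planar, that $g_{ij}(G^*,\sigma^*)\geq g_{ij}(B,\pi)$ for every $ij\in\mathbb{Z}_2^2$, and that any homomorphism $(G^*,\sigma^*)\to(B,\pi)$ extends back to a homomorphism $(G,\sigma)\to{\rm EDC}(B,\pi)$. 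Once these properties hold, $\mathcal{P}$-completeness of $(B,\pi)$ supplies the map and the lift closes the argument.

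The main obstacle is precisely the planarity step. In the $\mathcal{SP}$ setting of Section~\ref{sec:k4minorfree} the analogous reduction succeeds because $K_4$-minor-freeness is preserved by any triangle-based combinatorial closure, which is why the algebraic $g$-closed distance-graph characterisation of Theorem~\ref{thm:bounds if and only if} suffices and the planarity question never arises. In the planar setting, by contrast, taking the double-cover quotient of $(G,\sigma)$ typically introduces crossings, and a planar embedding of $(G^*,\sigma^*)$ must be chosen with care by exploiting the plane embedding of $(G,\sigma)$ itself. Indeed, the smallest case of the conjecture — $(B,\pi)$ the digon, so that ${\rm EDC}(B,\pi)=\SPC(2)$ is the antibalanced $K_4$ — already reduces to the Four-Color Theorem, and larger cases capture the signed-projective-cube conjectures (Conjecture~\ref{conj:MappingToPC(k)}) recalled in Section~1. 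Accordingly, any complete proof is likely to proceed by induction on the cover dimension, taking the Four-Color Theorem, Gr\"otzsch-type theorems and their signed analogues as base cases, and handling the inductive step by a face-by-face control of the auxiliary graph $(G^*,\sigma^*)$ that preserves the plane embedding inherited from $(G,\sigma)$.
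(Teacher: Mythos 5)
The statement you are trying to prove is stated in the paper as a \emph{conjecture} (in the concluding remarks), and the paper offers no proof of it: the authors only establish the analogous statement for the class $\mathcal{SP}$ of $K_4$-minor-free graphs. Your proposal is likewise not a proof but a strategy outline, and it contains a genuine gap that you yourself half-acknowledge. The central unproved step is the existence of the auxiliary planar signed graph $(G^*,\sigma^*)$: you propose to obtain it by ``quotienting $V(G)$ by the equivalence generated by these collapses,'' but the collapses (the edges of $G$ that are to use a negative edge $x^+x^-$ of ${\rm EDC}(B,\pi)$) are determined by the very homomorphism $(G,\sigma)\to{\rm EDC}(B,\pi)$ whose existence you are trying to establish. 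You cannot define $G^*$ before you have $f$, so the reduction is circular as stated; and even granting some choice of collapse classes, you give no argument that the quotient is planar or that it satisfies $g_{ij}(G^*,\sigma^*)\geq g_{ij}(B,\pi)$. Your own remark that the smallest instance (where $(B,\pi)$ is the digon and ${\rm EDC}(B,\pi)$ is the antibalanced $K_4$) is equivalent to the Four-Color Theorem, and that the higher instances capture Conjecture~\ref{conj:MappingToPC(k)}, is the clearest evidence that the missing step is not a technicality: any argument of this shape must contain a proof of those open statements.

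It is also worth noting that your plan inverts the architecture of the paper's $\mathcal{SP}$ proof rather than following it. The paper never constructs an auxiliary graph from the \emph{source} $(G,\sigma)$. Instead it uses a characterization of $\mathcal{SP}$-complete \emph{targets} (Theorem~\ref{thm:bounds if and only if}: a $g$-wide $(B,\pi)$ is $\mathcal{SP}$-complete iff some partial girth-transformed distance graph of $(B,\pi)$ carries a nonempty $g$-closed set of triangles), and then verifies, via Proposition~\ref{pro:ad in EDC} and a case analysis on weighted triples, that ${\rm EDC}(B,\pi)$ inherits such a $(g+1)$-closed structure. That characterization is available precisely because edge-maximal $K_4$-minor-free graphs are $2$-trees built from triangles; no analogous local test for $\mathcal{P}$-completeness is known, which is exactly why the planar statement remains conjectural. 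If you want to contribute something provable here, the realistic target is the $\mathcal{SP}$ analogue the paper actually proves, or partial planar cases already covered by known theorems.
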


In support of this conjecture, we showed that the claim holds if we work with the subclass of signed $K_4$-minor-free graphs.

Let $\mathcal{SP}_k$ be the class of signed $K_4$-minor-free graphs $(G,\sigma)$ satisfying $g_{ij}(G,\sigma) \geq g_{ij}(C_{-k})$. For even values of $k$, this is the class of signed bipartite $K_4$-minor free graphs of negative girth at least $k$, and for odd values of $k$, the class of signed antibalanced $K_4$-minor-free graphs of odd-girth $k$. 

We have provided  nearly optimal (in terms of number of vertices of the underlying graph) bound $(B,\pi)$ of order 2$\lfloor \frac{k}{2}\rfloor \lceil \frac{k}{2} \rceil$ for $\mathcal{SP}_k$ satisfying $g_{ij}(B,\pi) \geq g_{ij}(C_{-k})$.

The best possible such bounds are of order 2, 3, 6, 8, 12, 15 for $k=2,3,4,5,6,7$ respectively~\cite{BFN17,BFN19}. This suggest the following formula for the best order of such a bound: $\lfloor \frac{k}{2}\rfloor(\lceil  \frac{k}{2} \rceil+1)$. These formulas support a search among grid-like graphs, and we have provided some nearly optimal graphs based on this class of graphs.

We note that when $k$ is an odd number, the study of the homomorphism properties of $\mathcal{SP}_k$ is the same as the study of homomorphism properties of the class of series-parallel graphs of odd girth at least $k$. Precise bounds on the circular chromatic number in this class are given in~\cite{PZ02}, and on the fractional chromatic number, in~\cite{BFN17,FS15,GX16}. The optimal bounds of order 3, 8, 15 for the cases $k=3,5,7$ from~\cite{BFN17} each have both circular and fractional chromatic numbers that are the same as the best bound for that of series-parallel graphs of odd girth at least $k$ for $k=3,5,7$, hence strengthening results on both the circular chromatic number and the fractional chromatic number. We expect that this will be the case for general odd values of $k$. This is an alternative motivation for finding the optimal choice for $B$.

\bibliographystyle{abbrv}
\bibliography{ReferenceFile}

\end{document}